\title{An Ore-type theorem for perfect packings in graphs}
\author{Daniela K\"uhn, Deryk Osthus and Andrew Treglown}
\thanks {The authors were supported by the EPSRC, grant no.~EP/F008406/1.}
\date{June 2, 2009}
\def\COMMENT#1{} 
\def\TASK#1{}
\begin{document}
\maketitle
\def\noproof{{\unskip\nobreak\hfill\penalty50\hskip2em\hbox{}\nobreak\hfill%
         $\square$\parfillskip=0pt\finalhyphendemerits=0\par}\goodbreak}
\def\endproof{\noproof\bigskip}
\newdimen\margin   
\def\textno#1&#2\par{%
     \margin=\hsize
     \advance\margin by -4\parindent
            \setbox1=\hbox{\sl#1}%
     \ifdim\wd1 < \margin
        $$\box1\eqno#2$$%
     \else
        \bigbreak
        \hbox to \hsize{\indent$\vcenter{\advance\hsize by -3\parindent
        \sl\noindent#1}\hfil#2$}%
        \bigbreak
     \fi}
\def\proof{\removelastskip\penalty55\medskip\noindent{\bf Proof. }}
\def\C{\mathcal{C}}
\def\eps{\varepsilon}
\def\ex{\mathbb{E}}
\def\prob{\mathbb{P}}
\def\eul{{\rm e}}
\newtheorem{firstthm}{Proposition}
\newtheorem{thm}[firstthm]{Theorem}
\newtheorem{prop}[firstthm]{Proposition}
\newtheorem{lemma}[firstthm]{Lemma}
\newtheorem{cor}[firstthm]{Corollary}
\newtheorem{problem}[firstthm]{Problem}
\newtheorem{defin}[firstthm]{Definition}
\newtheorem{conj}[firstthm]{Conjecture}
\newtheorem{claim}[firstthm]{Claim}

\begin{abstract}
We say that a graph $G$ has a perfect $H$-packing (also called an $H$-factor) if
there exists a set of
disjoint copies of $H$ in $G$ which together cover all the vertices of $G$.
Given a graph $H$, we determine, asymptotically, the Ore-type degree condition which
 ensures that a graph
$G$ has a perfect $H$-packing. More precisely, let $\delta_{\rm Ore} (H,n)$ be the
smallest number $k$ such that every
graph $G$ whose order $n$ is divisible by $|H|$ and with $d(x)+d(y)\geq k$ for all
non-adjacent $x \not = y \in V(G)$ contains a perfect $H$-packing. We determine
$\lim_{n\to \infty} \delta_{\rm Ore} (H,n)/n$.
\end{abstract}

\section{Introduction}\label{sec1}
\subsection{Perfect packings in graphs of large minimum degree}
Given two graphs $H$ and $G$, an \emph{$H$-packing in $G$} is a collection of vertex-disjoint
copies of $H$ in~$G$. An $H$-packing is called \emph{perfect} if it covers all the
vertices of~$G$.
In this case one also says that $G$ contains an \emph{$H$-factor}. $H$-packings are
generalisations
of graph matchings (which correspond to the case when $H$ is a single edge).

In the case when $H$ is an edge, Tutte's theorem characterises those graphs which
have a
perfect $H$-packing. However, for other connected graphs $H$ no characterisation is
known.
Furthermore, Hell and Kirkpatrick~\cite{hell} showed that the decision problem
whether a graph~$G$ has a perfect
$H$-packing is NP-complete precisely when $H$ has a component consisting of at least
$3$ vertices.
It is natural therefore to ask for simple sufficient conditions which ensure the
existence of
a perfect $H$-packing. One such result is a theorem of Hajnal and Szemer\'edi~\cite{hs}
which states that a graph $G$ whose order $n$ is divisible by $r$ has a perfect
$K_r$-packing
provided that $\delta (G) \geq (1-1/r)n$. It is easy to see that the minimum degree
condition
here is best possible. So for $H=K_r$, the parameter which governs the existence of
a perfect
$H$-packing in a graph~$G$ of large minimum degree is $\chi (H)=r$.

The first two authors~\cite{kuhn, kuhn2} showed that for any graph $H$ either
the so-called critical chromatic number or the chromatic number of $H$
is the relevant parameter.
Here the \emph{critical chromatic number} $\chi _{cr} (H)$ of a graph $H$ is defined
as 
$$\chi _{cr} (H):=(\chi (H)-1)\frac{|H|}{|H|-\sigma (H)},$$ 
where $\sigma (H)$ denotes the size of the smallest possible 
colour class in any $\chi(H)$-colouring of $H$. Throughout the paper, we only
consider graphs~$H$
which contain at least one edge (without mentioning this explicitly), so
$\chi_{cr}(H)$ is well defined. 
Note that $\chi (H) -1 < \chi _{cr} (H) \leq \chi (H)$
for all graphs $H$, and $\chi _{cr} (H)=\chi (H)$ precisely when every $\chi
(H)$-colouring of $H$
has colour classes of equal size. The characterisation of when $\chi (H)$ or $\chi
_{cr} (H)$ is
the relevant parameter depends on the so-called highest common factor of $H$, which
is defined as follows.

We say that a colouring of $H$ is \emph{optimal} if it uses exactly $\chi (H)=:r$
colours. 
Given an optimal colouring $c$ of $H$, let $x_1 \leq x_2 \leq  \dots \leq x_r$ denote
the sizes of the colour classes of~$c$. We write $\mathcal D (c) := \{ x_{i+1} - x_i
\mid  i=1, \dots , r-1\}$,
and let $\mathcal D (H)$ denote the union of all the sets $\mathcal D (c)$ taken over
all optimal colourings $c$ of $H$. We denote by ${\rm hcf}_{\chi} (H)$ the 
highest common factor of all integers in $\mathcal D(H)$. If $\mathcal D (H) =\{ 0 \}$
then we define ${\rm hcf} _{\chi} (H) := \infty $. We write ${\rm hcf}_c(H)$ for the
highest
common factor of all the orders of components of $H$. For non-bipartite graphs $H$
we say that
${\rm hcf} (H)=1$ if ${\rm hcf} _{\chi} (H)=1$. If $\chi (H)=2$ then we say
${\rm hcf} (H)=1$ if ${\rm hcf} _c (H)=1$ and $ {\rm hcf }_{\chi} (H)\leq 2$.
(See~\cite{kuhn2} for some examples.) Put 
$$\chi ^* (H):=\begin{cases}
\chi _{cr} (H)& \text{if ${\rm hcf}(H)=1$;}\\
\chi (H)& \text{otherwise.}
\end{cases}$$
Also let $\delta (H,n)$ denote the smallest integer $k$ such that every graph $G$
whose order
$n$ is divisible by $|H|$ and with $\delta (G)\geq k$ contains a perfect $H$-packing.
\begin{thm}\label{kuhn} \cite{kuhn2} For every graph $H$ there exists a constant
$C=C(H)$ such that
$$\left(1-\frac{1}{\chi ^* (H)}\right)n-1 \leq \delta (H,n) \leq
\left(1-\frac{1}{\chi ^* (H)}\right)n+C.$$
\end{thm}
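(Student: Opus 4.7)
The plan has two directions. For the lower bound $\delta(H,n) \geq (1-1/\chi^{*}(H))n - 1$, I would construct, for each $n$ divisible by $|H|$, an extremal graph $G$ on $n$ vertices with $\delta(G) \geq (1 - 1/\chi^{*}(H))n - 2$ and no perfect $H$-packing, splitting according to whether ${\rm hcf}(H)=1$. When ${\rm hcf}(H)\neq 1$, so $\chi^{*}(H)=\chi(H)=:r$, I would take $G$ complete $r$-partite on classes $V_1,\dots,V_r$ with sizes close to $n/r$ but perturbed so that any perfect $H$-packing would be forced to split the classes among copies of $H$ in a way that violates the divisibility encoded by $\mathcal{D}(H)$ or ${\rm hcf}_{c}(H)$. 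When ${\rm hcf}(H)=1$, so $\chi^{*}(H)=\chi_{cr}(H)$, the extremal graph has $r-1$ large classes and a single smaller class whose size is tuned using the ratio $\sigma(H)/|H|$ so that any $H$-packing respecting the multipartite structure cannot cover it.

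For the upper bound I would adopt the standard regularity framework. First apply Szemer\'edi's Regularity Lemma to $G$ with tiny parameters to obtain a partition $V_0,V_1,\ldots,V_k$ and reduced graph $R$ whose minimum degree is asymptotically at least $(1 - 1/\chi^{*}(H))k$. Next, use the minimum degree condition on $R$ to find an almost-perfect $K_{\chi(H)}$-packing, or a suitable fractional variant; applying the Blow-up Lemma of Koml\'os, S\'ark\"ozy and Szemer\'edi inside each blown-up clique then produces an $H$-packing of $G$ covering all but $o(n)$ vertices.

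The main obstacle is upgrading this almost-perfect packing to an exactly perfect one while genuinely exploiting $\chi_{cr}(H)$ rather than merely $\chi(H)$ when ${\rm hcf}(H)=1$. I would set aside, before applying regularity, a small ``absorbing'' collection of disjoint copies of $H$ with the property that any small leftover vertex set with the correct divisibility can be incorporated by locally rewriting these copies; existence of enough absorbers follows from a probabilistic selection based on a supersaturation argument fed by the minimum degree condition. The leftover step also requires a balancing procedure inside each blown-up clique: when ${\rm hcf}(H)=1$ one has the flexibility to shift the sizes of the colour classes in a blown-up $K_{\chi(H)}$ by arbitrary integers (which is precisely what makes $\chi_{cr}(H)$ the threshold), whereas when ${\rm hcf}(H)\neq 1$ the divisibility encoded by $\mathcal{D}(H)$ or ${\rm hcf}_{c}(H)$ obstructs this and forces $\chi(H)$ as the threshold. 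Coordinating these case distinctions across the regularity partition, the absorbing reservoir, and the matching extremal construction is the delicate technical heart of the argument.
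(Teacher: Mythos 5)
Note that Theorem~\ref{kuhn} is not proved in this paper at all: it is quoted from K\"uhn--Osthus~\cite{kuhn2} and used as a black box (for the lower bound of Theorem~\ref{main}, and structurally as a template for Lemma~\ref{hcf}). There is therefore no ``paper's own proof'' here to compare against line by line; one can only compare your plan against the actual argument in \cite{kuhn2} and against the closely parallel argument this paper gives for Lemma~\ref{hcf} in Section~\ref{sec6}.

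Your lower-bound sketch is essentially correct, but the upper-bound plan has two genuine gaps. First, packing the reduced graph by copies of $K_{\chi(H)}$ and blowing up only yields the threshold $(1-1/\chi(H))n$, never $(1-1/\chi_{cr}(H))n$; to reach the critical chromatic number one must pack the reduced graph with an \emph{unbalanced} complete $\chi(H)$-partite graph whose smallest class has the ratio $\sigma(H)/(|H|-\sigma(H))$ to the others. This is precisely the graph $B'$ used in Section~\ref{sec6}, and it is the content of Koml\'os's Theorem~\ref{thmKomlos}; your phrase ``a suitable fractional variant'' is where this essential ingredient is hiding unstated. Second, \cite{kuhn2} does not use an absorbing reservoir. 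After an almost-perfect $B'$-packing of the reduced graph it (i) covers exceptional vertices by greedily embedding copies of $H$ through the clusters, (ii) makes each blown-up bottle's vertex count divisible by $|H|$ by ``shifting remainders mod $|H|$'' along a spanning tree of an auxiliary graph on the bottles --- exactly what Claims~\ref{c1}--\ref{c3} do in Section~\ref{sec6sub} --- and (iii) applies Lemma~\ref{useful} together with the Blow-up Lemma to obtain a perfect $H$-packing inside each bottle. An absorbing argument of the type you describe would need to produce its reservoir at degree only $(1-1/\chi_{cr}(H))n+O(1)$, and at that exact threshold a supersaturation-based probabilistic selection is not guaranteed to supply enough absorbers; as sketched, your method would most naturally give an $o(n)$ additive error rather than the constant $C(H)$ the theorem asserts.
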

Theorem~\ref{kuhn} improved previous bounds by Alon and Yuster~\cite{ay}, who showed
that
$\delta (H,n)\le (1-1/\chi(H))n+o(n)$, and  by Koml\'os, S\'ark\"ozy and
Szemer\'edi~\cite{kss},
who replaced the $o(n)$-term  by a constant depending only on~$H$. Further related
results are discussed
in the surveys~\cite{kky, kom, k, survey, yuster}.

\subsection{Ore-type degree conditions for perfect packings}
Of course, one can also consider other types of degree conditions that ensure a
perfect $H$-packing
in a graph~$G$. One natural such condition is an Ore-type degree condition requiring
a lower bound on the sum of the degrees of non-adjacent vertices of~$G$. 
(The name comes from Ore's theorem~\cite{ore}, which states that a graph~$G$ of
order $n\ge 3$ contains a
Hamilton cycle if $d(x)+d(y) \geq n$ for all non-adjacent $x \not = y\in V(G)$.)

A result of Kierstead and Kostochka~\cite{kier} on equitable colourings implies that 
a graph $G$ whose order $n$ is divisible by $r$ and with $d(x)+d(y) \geq
2(1-1/r)n-1$ for all non-adjacent
$x\not =y \in V(G)$ contains a perfect $K_r$-packing. Note that this is a
strengthening of
the Hajnal-Szemer\'edi theorem. 
Kawarabayashi~\cite{kawa} asked for the Ore-type condition which guarantees a $K^-
_4$-packing
in a graph $G$ covering a given number of vertices of~$G$. (Here $K^-_4$ denotes the
graph
obtained from $K_4$ by removing an edge.) Similarly it is natural to seek an
Ore-type analogue 
of Theorem~\ref{kuhn}. This will be the main result of this paper (but with an
$o(n)$-error term).
Perhaps surprisingly, the Ore-type condition needed is not `twice the minimum degree
condition'.
For some graphs~$H$ it depends on the so-called colour extension number of~$H$,
which we will define now. Roughly speaking, this is a measure of how many extra
colours we
need to properly colour $H$ if we try to build this colouring by extending an
$(r-2)$-colouring
of a neighbourhood of a vertex of~$H$.

More precisely, suppose that~$H$ is a graph with $\chi (H)=:r$ which contains a
vertex~$x$
for which the subgraph $H[N(x)]$ induced by the neighbourhood of~$x$ is
$(r-2)$-colourable.
Given such a vertex $x \in V(H)$, let $m_x$ denote the smallest integer for which
there exists 
an $(r-2)$-colouring of $H[N(x)]$ that can be extended to an $(r+m_x)$-colouring of
$H$. 
The \emph{colour extension number $ CE (H)$ of $H$} is defined as
$$ CE (H):=  \min \{m_x \text{ } | \text{ }x \in V(H) \text{ with } \chi (H[N(x)])
\leq r-2\}.$$
If $\chi (H[N(x)])=r-1$ for all $x \in V(H)$ we define $CE (H):= \infty$.
So every bipartite graph $H$ without isolated vertices has $CE (H)=\infty$. All
other bipartite graphs $H$ have $CE(H)=0$.
In general, $1 \leq CE(H) < \infty$ if for any optimal colouring of~$H$ and any $v
\in V(H)$,
$N(v)$ lies in exactly $r-1$ colour classes of $H$, but there exists a vertex $x \in
V(H)$ such that
$\chi (H[N(x)])\leq r-2$. Note that in this case $CE (H) \leq r-2$. (Indeed,
we can colour $H-N(x)$ with $r$ different colours to obtain a $(2r-2)$-colouring
of~$H$.)

In order to help the readers to familiarize themselves with the notion of the colour
extension number we now give a number of examples. 
$\chi (K_4 ^-)=3$ and $\chi (K^- _4 [N(x)])=2$ for every vertex $x$ of $K^- _4$. Thus $CE(K^- _4)=\infty$.
Next consider the graph~$F^\diamond$ obtained from the complete $3$-partite graph $K_{2,2,2}$ by removing an edge
$xy$ of $K_{2,2,2}$ and adding a new vertex $z$ which is adjacent to $x$ and $y$ only.
Then $\chi (F^\diamond)=3$, $\chi (F^\diamond[N(w)])=2$ for every vertex $w \not = z$ in $F^\diamond$ and
$\chi(F^\diamond[N(z)])=1$. 
Note that in any $3$-colouring of $F^\diamond$, $x$ and $y$ are coloured differently. So if we $1$-colour
$N(z)=\{x,y\}$, this colouring can be extended to a $4$-colouring of $F^\diamond$ but not a $3$-colouring.
Thus $CE(F^\diamond)=1$.

For each~$k\ge 1$ and $r \ge k+2$ we now give an example%
      \COMMENT{By choosing the sizes of the colour classes~$V_i$ suitably we can
construct $H^\diamond$ with
${\rm hcf}(H^\diamond)=1$ and also~$H^\diamond$ with ${\rm hcf}(H^\diamond)\neq 1$.}
of a family of graphs~$H^\diamond$ with $CE (H^\diamond)=k$ and $\chi(H^\diamond)=r$.
Consider a complete $r$-partite graph whose vertex classes $V_1, \dots, V_r$
have size~$>k$. Let~$H^\diamond$ be obtained from this graph by deleting the edges of~$k$
vertex-disjoint
copies $K^1, \dots, K^{k}$ of $K_{k+1}$ which lie in $V_1 \cup \dots \cup V_{k+1}$, and
by adding a new vertex~$x$ which is adjacent to the $k(k+1)$ vertices lying in these
copies of~$K_{k+1}$ as well as to all the vertices in $V_{k+2},\dots,V_{r-1}$ (see
Figure~1).
\begin{figure}[htb!]
\begin{center}\small
\psfrag{x}[][]{$x$}
\psfrag{2}[][]{$V_1$}
\psfrag{3}[][]{$V_2$}
\psfrag{4}[][]{$V_3$}
\psfrag{5}[][]{$V_4$}
\psfrag{6}[][]{$V_5$}
\includegraphics[width=0.35\columnwidth]{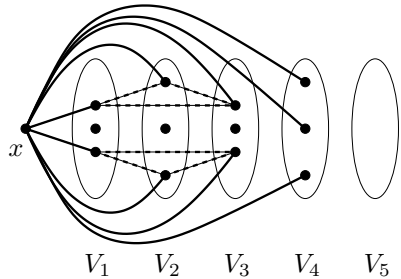}  
\caption{The graph~$H^\diamond$ in the case when $k=2$, $r=5$ and when each~$V_i$
has size~3. The dashed lines indicate
the deleted edges.}
\end{center}
\end{figure}
Note that $\chi (H^\diamond)=r$. Furthermore, any vertex $y \in V_1\cup\dots\cup V_r$
lies in a copy of $K_r$ in~$H^\diamond$. So $\chi (H^\diamond[N(y)])=r-1$.
However, the subgraph $D:=H^\diamond[N(x)\cap V_1\cap\dots\cap V_{k+1}]$ has a
$k$-colouring~$c'_x$ with colour classes $V(K^1), \dots, V(K^k)$
and it is easy to check that this is the only $k$-colouring of~$D$ (and so in
particular
$\chi(D)=k$).%
      \COMMENT{Indeed, we show by induction on~$k$ that if $D_k$ is obtained
from a complete $(k+1)$-partite graph with vertex classes $V_1,\dots,V_{k+1}$ of
size~$k$
by deleting $k$ vertex-disjoint copies $K^1,\dots,K^k$ of~$K_{k+1}$ then the only
$k$-colouring of $D_k$ is the one with colour classes $V(K^1),\dots,V(K^k)$.
This is easy if $k=1$. Suppose that $k\ge 2$ and let $x_i$ be the vertex of~$K^i$
in~$V_1$.
Consider any $k$-colouring~$c$ of $D_k$ and suppose that $x_1$ has colour~1. Then no
vertex in $D_k-(\{x_2,\dots,x_k\}\cup V(K_1))$ has colour~1 since each such vertex
is joined to~$x_1$.
Thus~$c$ induces a $(k-1)$-colouring of $D_k-(\{x_2,\dots,x_k\}\cup
V(K_1))=D_{k-1}$. But by
induction the colour classes of this colouring must be
$V(K^2)\setminus \{x_2\},\dots,V(K^k)\setminus \{x_k\}$. Wlog $V(K^i)\setminus
\{x_i\}$ has colour~$i$.
Consider the vertex $v\in V(K^1)\cap V_j$ ($j\ge 2$). Since $v$ is joined to all
the vertices in~$V(K^i)\setminus (V_j\cup V_1)\neq \emptyset$ for all $i\ge 2$,
this means that $v$ has neighbours of colours $2,\dots,k$ and thus $v$ has colour~1
in $c$.
(So $V(K^1)$ has colour~1.) Finally, each $x_i$ with $i\neq 1$ must have colour~$i$
since
it is joined to vertices of all other colours.}
Thus $\chi(H^\diamond[N(x)])=r-2$ and the only $(r-2)$-colouring of $H^\diamond[N(x)]$ is the one
which agrees with~$c'_x$
on~$D$ and colours each of $V_{k+2},\dots,V_{r-1}$ with a new colour. Let~$c_x$
denote this
colouring. When extending~$c_x$ to a proper colouring of~$H^\diamond$ we cannot reuse
the~$r-2$ colours used in~$c_x$
since every $y \in V(H^\diamond)\setminus N(x)$ is adjacent to a vertex in each colour class
of~$c_x$.
As $\chi (H^\diamond-N(x))=r-(r-k-2)=k+2$ this means that we require $r+k$ colours in total
to extend~$c_x$
to a proper colouring of~$H^\diamond$. Thus $CE (H^\diamond)=k$.

Let
$$
\chi _{\rm {Ore}} (H):=
\begin{cases}
\chi  (H)& \text{if }{\rm hcf}(H)\not =1 \text{ or }CE (H)=\infty ;\\
\max\left\{\chi_{cr} (H),\chi (H)-\frac{2}{CE(H)+2} \right\}& \text{otherwise.}
\end{cases}
$$
Recall that $CE(K^- _4)=\infty$ and $CE(F^\diamond)=1$, where $F^\diamond$ was defined above.
So $\chi _{\rm Ore} (K^- _4)=\chi (K^- _4)=3$. Any $3$-colouring
of $F^\diamond$ has one colour class of size $3$ and two colour classes of size $2$. So ${\rm hcf} (F^\diamond)=1$
and thus $\chi _{\rm Ore} (F^\diamond) = \max \{\chi_{cr} (F^\diamond),3-2/3 \}=\max \{ 14/5, 7/3\}=14/5.$

Note that if ${\rm hcf} (H)=1$ and $CE (H)=0$ then $\chi _{\rm {Ore}} (H)=\chi _{cr}
(H)$
(an odd cycle of length at least~$5$ provides an example of such a graph $H$).
On the other hand, one can choose the sizes of the vertex classes $V_i$ in the preceding example $H^\diamond$ so
that 
$\chi_{\rm Ore}(H^\diamond)$ lies strictly between $\chi_{cr}(H^\diamond)$ and $\chi(H^\diamond)$.
(For instance, take $k$ large, $|V_1|=k+1$, $|V_2|=2k$ and $|V_i| = 2k+1$ for all
$i\ge 3$.
Then $\chi_{cr}(H^\diamond)$ is close to $\chi(H^\diamond)-1/2$, ${\rm hcf}(H^\diamond)=1$ and so $\chi_{\rm
Ore}(H^\diamond)=\chi (H^\diamond)-2/(k+2)$.)

Given a graph $H$, let $\delta_{\rm Ore} (H,n)$ be the smallest integer $k$ such
that every
graph $G$ whose order $n$ is divisible by $|H|$ and with $d(x)+d(y)\geq k$ for all
non-adjacent
$x \not = y \in V(G)$ contains a perfect $H$-packing.
Roughly speaking, our next result states that when considering an Ore-type degree
condition, for any graph
$H$, $\chi _{\rm Ore} (H)$ is the relevant parameter which governs the existence of
a perfect $H$-packing.
In particular, it implies that we do not have a `dichotomy' involving only
$\chi(H)$ and $\chi_{cr}(H)$ as in Theorem~\ref{kuhn}. 
\begin{thm}\label{main}
For every graph $H$ and each $\eta>0$ there exists a constant $C=C(H)$
and an integer $n_0=n_0 (H,\eta)$ such that if $n \geq n_0$ 
then $$2\left(1-\frac{1}{ \chi_{\rm {Ore}}(H)}\right)n-C\le \delta_{\rm Ore} (H,n)
\le 2\left(1-\frac{1}{\chi_{\rm {Ore}}(H)}+\eta\right)n.$$
\end{thm}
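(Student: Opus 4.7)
The strategy is to prove the two inequalities of Theorem~\ref{main} separately, with the upper bound by far the more substantial.

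For the lower bound I would exhibit, for each relevant $n$, a graph $G$ whose Ore degree sum over non-adjacent pairs is at least $2(1-1/\chi_{\rm Ore}(H))n-C$ but which contains no perfect $H$-packing. Three classes of constructions are needed, corresponding to the three possible values of $\chi_{\rm Ore}(H)$. When $\chi_{\rm Ore}(H)=\chi(H)$ the regular extremal graphs from the proof of Theorem~\ref{kuhn} already satisfy the Ore condition, since a $d$-regular graph has Ore sum $2d$. When $\chi_{\rm Ore}(H)=\chi_{cr}(H)$, a suitable unbalanced complete $(\chi(H)-1)$-partite space-barrier witnesses the bound, again borrowing from the proof of Theorem~\ref{kuhn}. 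The novel construction is for the case $\chi_{\rm Ore}(H)=\chi(H)-2/(CE(H)+2)$, where I would take a small clique $L$ of low-degree vertices joined in a controlled way to a large almost-complete $(\chi(H)-1)$-partite graph, engineered so that the definition of $CE(H)$ forces every $H$-copy meeting $L$ to contain at least $CE(H)+2$ vertices of $L$; choosing $|L|$ incompatible with this divisibility then rules out a perfect $H$-packing.

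For the upper bound I would adapt the Regularity--Blow-up framework of~\cite{kuhn,kuhn2} to the Ore-type setting. Set $\alpha:=1/\chi_{\rm Ore}(H)$ and let $L:=\{v\in V(G):d(v)<(1-\alpha+\eta/2)n\}$. The Ore condition immediately forces $L$ to be a clique in $G$, and every vertex of $V(G)\setminus L$ has degree at least $(1-\alpha+\eta/2)n$. When $\chi_{\rm Ore}(H)=\chi(H)$ we have $1-\alpha\ge 1-1/\chi^*(H)+\Omega(1)$ and Theorem~\ref{kuhn} applies essentially directly. The interesting case is $\chi_{\rm Ore}(H)<\chi(H)$: then I would pre-embed $L$ into vertex-disjoint copies of $H$, mapping each $v\in L$ onto an `$x$-vertex' as in the definition of $CE(H)$, so that up to $CE(H)+2$ vertices of $L$ can be absorbed per $H$-copy. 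This is exactly why the term $2/(CE(H)+2)$ appears. After this pre-embedding the remaining graph $G'$ has minimum degree at least $(1-1/\chi^*(H)+\eta')|G'|$ for some $\eta'>0$, placing it in the regime of Theorem~\ref{kuhn}: the Regularity Lemma yields a reduced graph $R$, a fractional $K_{\chi(H)}$-tiling on $R$ and the Blow-up Lemma produce an almost-perfect $H$-packing of $G'$, and an absorbing argument completes it to a perfect one.

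The main obstacle is the pre-embedding of $L$. We must simultaneously (i) use at most roughly $|L|/(CE(H)+2)$ copies of $H$, so as to match the threshold $\chi(H)-2/(CE(H)+2)$ in the critical case; (ii) ensure that the neighbourhoods of $L$-vertices in $V(G)\setminus L$ are rich enough to host the required `$H$ minus $x$' structures, despite those neighbourhoods possibly being tight; and (iii) interpolate smoothly between the $\chi_{cr}(H)$-bound and the $CE(H)$-bound when the two terms in the maximum defining $\chi_{\rm Ore}(H)$ are comparable. Verifying that the residual graph after pre-embedding satisfies the min-degree hypothesis of Theorem~\ref{kuhn} in the correct quantitative form, without losing more than $\eta n$ in the error term, is where the bulk of the technical work lies.
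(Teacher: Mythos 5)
Your high-level plan for the upper bound is genuinely different from the paper's, and it has a gap that I do not see how to close. The paper does \emph{not} reduce to the minimum-degree theorem (Theorem~\ref{kuhn}) after excising a low-degree set. Instead it proves and uses an Ore-type analogue of Koml\'os's tiling theorem (Theorem~\ref{orekomlos}): after applying the Regularity Lemma, the reduced graph inherits the Ore condition (Lemma~\ref{inherit}), Theorem~\ref{orekomlos} gives an almost-perfect $B'$-packing of the reduced graph, Lemma~\ref{exceptional} is used to cover each exceptional vertex by its own copy of $H$, a `shifting remainders along a spanning tree' argument fixes divisibility, and the Blow-up Lemma finishes. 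The low-degree vertices are never isolated as a set; they are simply part of the exceptional set or absorbed inside clusters.

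Your substitute for this machinery — remove the clique $L$ of low-degree vertices, pre-embed it, and then apply Theorem~\ref{kuhn} to $G-L$ — has two concrete problems. First, you claim that ``up to $CE(H)+2$ vertices of $L$ can be absorbed per $H$-copy,'' but nothing in the definition of $CE(H)$ supports this: $CE(H)$ is about extending an $(r-2)$-colouring of the neighbourhood of \emph{one} special vertex $x$, and the paper's Lemma~\ref{exceptional} accordingly covers exceptional vertices \emph{one per copy of $H$}, not $CE(H)+2$ at a time. Second, you have no bound on $|L|$. The Ore condition only forces $L$ to be a clique; it does not make $|L|$ small. A straightforward calculation shows that your residual min-degree claim ``$\delta(G-L)\ge(1-1/\chi^*(H)+\eta')|G-L|$'' only holds when $|L|=O(\eta n)$, and if $|L|$ is a larger constant fraction of $n$ the argument breaks down (and pre-embedding $L$ disjointly uses $\approx|L|\cdot|H|$ vertices, further eroding what remains). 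The paper sidesteps both issues by never removing $L$ in the first place: the exceptional set from the Regularity Lemma is automatically small, and each exceptional vertex is covered individually via Lemma~\ref{exceptional}.

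On the lower bound, your first two constructions match the paper's (Proposition~\ref{prop1} and the extremal graphs inherited from Theorem~\ref{kuhn}), but your third does not. The paper's Proposition~\ref{prop2} constructs a single vertex $w$ attached to an $(r-2)$-partite subgraph of a complete $(r+m-1)$-partite graph, engineered so that $w$ lies in \emph{no} copy of $H$ at all — a local obstruction, not a divisibility one. Your proposal of a clique $L$ forcing ``every $H$-copy meeting $L$ to contain at least $CE(H)+2$ vertices of $L$'' is not what the colour extension number controls, and it is not clear to me that such a graph exists with the right Ore sum. You should instead aim for a single uncoverable vertex as in Proposition~\ref{prop2}.
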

So for example, Theorem~\ref{main} implies that
$\lim_{n\to \infty} \delta_{\rm Ore} (K^- _4,n)/n= 4/3$ and \\$\lim_{n\to \infty} \delta_{\rm Ore} (F^\diamond ,n)/n =9/7$.

The upper bound in Theorem~\ref{main} follows from Lemmas~\ref{generalH}
and~\ref{hcf} in 
Section~\ref{sec3}, which in turn are proved in Sections~\ref{sec3} and~\ref{sec6}.
The lower bound is proved in Section~\ref{sec2}.
For every graph $H$ there are infinitely many values of $n$ for which we can take
$C=2$  in
Theorem~\ref{main}. In fact, if ${\rm hcf }(H)\not =1$ or
$CE (H)=\infty$ then $C=2$ suffices for all $n$ divisible by $|H|$. In general $C
\leq 2|H|^4$
(see Section~\ref{sec2}).
It would be interesting to know whether one can replace the error term $\eta n$ 
by a constant depending only on $H$.

\subsection{Almost perfect packings}
The critical chromatic number was first introduced by Koml\'os~\cite{ko}, who showed
that 
it is the relevant parameter when considering `almost' perfect $H$-packings.
\begin{thm}\label{thmKomlos} \cite{ko}
For every graph $H$ and each $\gamma>0$ there exists an integer
$n_0=n_0(\gamma,H)$ such that every graph $G$ of order $n\ge n_0$
and minimum degree at least $(1-1/\chi_{cr}(H))n$ contains an $H$-packing
which covers all but at most $\gamma n$ vertices of~$G$.
\end{thm}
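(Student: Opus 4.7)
The plan is to deploy the Szemer\'edi Regularity Lemma and then reduce the problem to a packing question on the reduced graph. Write $r:=\chi(H)$, $\sigma:=\sigma(H)$ and $h:=|H|$, so that $1-1/\chi_{cr}(H) = (r-1)(h-\sigma)/((r-1)h)$ and the colour classes of an optimal colouring of $H$ have sizes between $\sigma$ and $h-(r-1)\sigma$. Apply the Regularity Lemma to $G$ with parameters $\varepsilon \ll d \ll \gamma$ to obtain an equitable partition $V_0,V_1,\dots,V_k$ with $|V_0|\le \varepsilon n$ and clusters $V_i$ of common size $m$. Let $R$ be the reduced graph on $\{V_1,\dots,V_k\}$ in which $V_iV_j$ is an edge precisely when $(V_i,V_j)$ is $\varepsilon$-regular with density at least $d$. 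A standard counting shows $\delta(R)\ge (1-1/\chi_{cr}(H)-\varepsilon') k$ for some $\varepsilon'\ll \gamma$.

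Next I would prove the key combinatorial statement about $R$: there is a collection of vertex-disjoint subsets $S_1,\dots,S_t\subseteq V(R)$ covering all but at most $\gamma' k$ vertices, such that each $S_j$ induces a copy of $K_r$ in $R$ whose $r$ clusters can be assigned multiplicities proportional to the colour-class sizes of an optimal colouring of~$H$. The choice of constant in $\chi_{cr}(H)$ is exactly what makes this possible: a minimum-degree-$(1-(h-\sigma)/((r-1)h))k$ graph contains a fractional $K_r$-tiling in which each $K_r$ has its vertex classes weighted in the ratios $\sigma : x_2 : \cdots : x_r$ of a colouring of~$H$, and rounding this tiling yields the desired $S_j$'s covering all but a $\gamma'$-fraction of $V(R)$. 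This is the main structural obstacle, and is where the critical chromatic number genuinely enters rather than $\chi(H)$.

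With this structure in hand, I translate back to $G$. For each $S_j=\{V_{j_1},\dots,V_{j_r}\}$, I split each cluster $V_{j_i}$ into sub-clusters whose sizes are proportional to the colour-class sizes of~$H$, discarding a small fraction of vertices to achieve exact divisibility by~$h$. Using the Slicing Lemma, the resulting sub-clusters still form $\varepsilon''$-regular pairs of density at least $d/2$. Then the Blow-up Lemma (or, more simply, an iterative greedy embedding exploiting regularity) produces a perfect $H$-packing of the cluster system on~$S_j$.

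Finally I bound the total number of uncovered vertices. The uncovered vertices are: those in $V_0$ (at most $\varepsilon n$); those in clusters not assigned to any $S_j$ (at most $\gamma' n$); and the small remainders inside each used cluster that had to be discarded to make cluster sizes multiples of the colour-class sizes (at most $O(\varepsilon n)$ overall, since there are $k$ clusters). Choosing $\varepsilon, d, \gamma'$ small enough compared to $\gamma$ (and hence $n_0$ large enough to absorb the dependence on $k$ from the Regularity Lemma) the total leftover is at most $\gamma n$, which completes the proof. I expect the structural lemma about fractional $K_r$-tilings in $R$ to be the main technical hurdle; the regularity setup and the Blow-up Lemma application are by now standard.
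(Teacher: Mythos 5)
The high-level framework (Regularity Lemma, reduced graph, Blow-up Lemma) matches what Koml\'os does, but your key structural claim about the reduced graph $R$ is false, and fixing it is exactly the content of Koml\'os's proof.

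You claim that the inherited minimum-degree condition $\delta(R)\ge(1-1/\chi_{cr}(H)-\varepsilon')|R|$ forces a vertex-disjoint family of copies of $K_r$ covering all but a $\gamma'$-fraction of $V(R)$, to which you then assign cluster multiplicities proportional to the colour-class sizes of $H$. This is not true unless $\chi_{cr}(H)=\chi(H)$. Take $R$ to be the complete $r$-partite graph $K_{a,b,\ldots,b}$ with one class of size $a=\gamma b$, where $\gamma:=(r-1)\sigma(H)/(|H|-\sigma(H))\in(0,1)$. Then $\delta(R)/|R|=(r-2+\gamma)/(r-1+\gamma)=1-1/\chi_{cr}(H)$, so $R$ meets your degree hypothesis on the nose, but every $K_r$ in $R$ uses a vertex of the small class, so the largest $K_r$-packing covers only $ra/|R|=r\gamma/(r-1+\gamma)$ of the vertices, which is bounded away from $1$ (and tends to $0$ as $\gamma\to 0$). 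The same example shows that any attempt to round a fractional weighted $K_r$-tiling to a vertex-disjoint integer $K_r$-packing must lose a constant fraction. This is precisely the regime where $\chi_{cr}$ genuinely differs from $\chi$, so your lemma collapses the theorem to the balanced case.

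What Koml\'os (and the sketch of the Ore-analogue in Section~\ref{sketch}) does instead is qualitatively different. One first observes that it suffices to prove the theorem for $H=B$, the complete $r$-partite graph with one class of size $(r-1)\sigma(H)$ and $r-1$ classes of size $|H|-\sigma(H)$, since $\chi_{cr}(B)=\chi_{cr}(H)$ and $B$ has a perfect $H$-packing. Then, rather than packing $K_r$'s in $R$, one packs copies of $H$ itself in $R$. The crucial ``swapping'' step (Claim~\ref{tilingclaim}) shows that if the largest $H$-packing of $R$ is far from perfect, then $R$ contains a family of vertex-disjoint copies of $H$, $H'$ and $K_r$ covering noticeably more vertices, where $H'$ is $H$ with each large class reduced by one. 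Choosing $t=(\omega-\sigma)|H|$, each of $H(t)$, $H'(t)$, $K_r(t)$ has a perfect $H$-packing, so passing to the blow-up $R(t)$ strictly improves the $H$-packing proportion; iterating a bounded number of times yields an almost-perfect $H$-packing of $R(t^k)$, which is then realised in $G$ by partitioning clusters into $t^k$ subclusters. This iterated blow-up argument is the device that replaces the naive rounding in your proposal, and it is what makes the $\chi_{cr}$ threshold (rather than $\chi$) achievable. Your regularity setup and Blow-up Lemma application at the end are fine; the missing piece is this packing-amplification mechanism in the reduced graph.
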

It is easy to see that the bound on the minimum degree in Theorem~\ref{thmKomlos} is
best possible.
In the proof of Theorem~\ref{main} we will use the following result which provides an 
Ore-type analogue of Theorem~\ref{thmKomlos}.
Again, the critical chromatic number is the relevant parameter for any graph~$H$.
In particular, this means that Theorem~\ref{orekomlos} is a generalization of
Theorem~\ref{thmKomlos}.
The proof of Theorem~\ref{orekomlos} is almost identical to that of
Theorem~\ref{thmKomlos}. A sketch of the proof is given in Section~\ref{sketch}. Full details
can be found in~\cite{treglown}.  
\begin{thm}\label{orekomlos}
For every graph~$H$ and each $\eta>0$ there exists an
integer $n_0=n_0 (H, \eta)$ such that if $G$ is a graph on $n\geq n_0$ vertices and
$$d(x)+d(y) \geq 2 \left( 1 - \frac{1}{\chi _{cr} (H) } \right)n$$
for all non-adjacent $x \not = y\in V(G)$ then $G$ has an $H$-packing covering all but
at most~$\eta n$ vertices. 
\end{thm}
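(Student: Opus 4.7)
The plan is to follow Koml\'os's Regularity Lemma-based proof of Theorem~\ref{thmKomlos} essentially verbatim, checking at each step that the Ore-type hypothesis plays the same role as the minimum degree hypothesis. The only nontrivial adaptations are showing that the Ore-type condition transfers faithfully to the reduced graph, and that Koml\'os's tiling argument for the reduced graph still goes through in this setting.

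First I would apply Szemer\'edi's Regularity Lemma to $G$ with parameters $\eps\ll d\ll \eta, 1/|H|$, yielding an exceptional set $V_0$ with $|V_0|\le \eps n$ and equal-sized clusters $V_1,\ldots,V_k$. Let $R$ be the reduced graph on $\{V_1,\ldots,V_k\}$ in which $V_iV_j$ is an edge precisely when $(V_i,V_j)$ is $\eps$-regular of density at least $d$. To transfer the Ore-type hypothesis to $R$, note that for any non-edge $V_iV_j$ of $R$ the bipartite density between $V_i$ and $V_j$ is below $d$, so we may pick $v\in V_i$ and $u\in V_j$ with $uv\notin E(G)$. Applying the hypothesis to this pair and using the standard upper bounds $d_G(v)\le d_R(V_i)m+O((d+\eps)n)$ and $d_G(u)\le d_R(V_j)m+O((d+\eps)n)$ gives
$$d_R(V_i)+d_R(V_j)\ge 2\bigl(1-1/\chi_{cr}(H)\bigr)k-\eta' k$$
for every non-edge $V_iV_j$ of $R$, where $\eta'\ll \eta$.

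Next I would apply Koml\'os's tiling argument to $R$. Since that argument uses a minimum degree hypothesis rather than an Ore-type one, a small amount of case analysis is required. Let $A:=\{V_i\in V(R):d_R(V_i)<(1-1/\chi_{cr}(H)-\eta'')k\}$ for a suitable $\eta''\ll\eta$; the Ore-type bound on $R$ forces $A$ to be a clique in $R$ and $|A|<(1-1/\chi_{cr}(H))k$. If $|A|$ is small then $R[V(R)\setminus A]$ satisfies the minimum degree hypothesis of Theorem~\ref{thmKomlos} (after rescaling by $|V(R)\setminus A|$), so Koml\'os's argument produces the desired fractional $K_{\chi(H)}$-tiling of $V(R)\setminus A$ and the $A$-clusters are absorbed into the uncovered remainder. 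If $|A|$ is not small, one first tiles the clique $A$ by copies of $K_{\chi(H)}$ inside $R[A]$ (which exist because $A$ is a large clique in $R$ and all corresponding bipartite pairs in $G$ are $\eps$-regular of density at least $d$), and then applies Koml\'os's argument to the remaining clusters, which now satisfy the required minimum degree condition.

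Finally, each $K_{\chi(H)}$ in the fractional tiling of $R$ is converted into an almost-perfect $H$-packing of the corresponding clusters of $G$ by the Blow-up Lemma, with the $\chi_{cr}$-weighting handled exactly as in Koml\'os's original argument. The total number of uncovered vertices is at most $|V_0|$ plus a small fraction of vertices in clusters not touched by the tiling, which together can be bounded by $\eta n$. The main obstacle is the case analysis on $|A|$ in the preceding paragraph: one must verify that in the ``large $|A|$'' case the pre-tiling of $A$ produces local $H$-packings which fit together with the main tiling on $V(R)\setminus A$, and in both cases that the error terms coming from non-optimal cluster sizes and from the interface between $A$ and $V(R)\setminus A$ remain $o(n)$.
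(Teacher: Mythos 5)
Your outline correctly identifies the Regularity Lemma framework and the inheritance of the Ore-type condition to the reduced graph (this matches the paper's Lemma~\ref{inherit}), and the paper does indeed adapt Koml\'os's argument rather than inventing something new. The gap lies in your case analysis on the set $A$ of low-degree clusters.

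The ``large $|A|$'' case does not work as described. You propose to tile the clique $A$ by copies of $K_{\chi(H)}$ and then claim that the remaining clusters in $R-A$ ``now satisfy the required minimum degree condition.'' They do not: a cluster $V\notin A$ has $d_R(V)\ge(1-1/\chi_{cr}(H)-\eta'')k$, but after deleting $A$ its degree in $R-A$ may drop to $d_R(V)-|A|$. Since $A$ is a clique of low-degree vertices, $|A|$ is bounded only by $|A|-1<(1-1/\chi_{cr}(H)-\eta'')k$, so $|A|$ can be close to $(1-1/\chi_{cr}(H))k$. In that regime the residual degree can be $o(k)$ while $|R-A|\approx k/\chi_{cr}(H)$, so $\delta(R-A)$ falls far short of $(1-1/\chi_{cr}(H))|R-A|$ and Koml\'os's theorem cannot be applied to $R-A$. (Concretely, with $\chi_{cr}(H)=3$ and $|A|\approx 2k/3$, a cluster outside $A$ may lose essentially all its neighbours when $A$ is removed.) The interface absorption you mention does not repair this, because the problem is the hypothesis of Koml\'os's theorem failing on the remainder, not the error term.

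The paper sidesteps this entirely: rather than pruning the global low-degree set, Claim~\ref{tilingclaim} looks only at the set $\mathcal L$ of clusters \emph{uncovered} by the current maximum $H$-packing in the reduced graph. The Ore condition then gives exactly what the augmentation step (Lemma~15 of \cite{ko}) needs: two uncovered low-degree clusters must be adjacent, so the low-degree clusters in $\mathcal L$ form a clique, and if that clique has $\ge|H|$ vertices it contains $K_{|H|}\supseteq H$ inside $\mathcal L$, contradicting maximality. Thus all but at most $|H|-1$ uncovered clusters have high degree, and Koml\'os's augmentation (replacing a copy of $H$ in the packing and an uncovered cluster by a copy of $H'$ and a copy of $K_r$) proceeds unchanged. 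This local argument does not require any control over the global degree structure of $R$ after removing a large clique, which is the point your proposal misses.
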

Shokoufandeh and Zhao~\cite{szhao} showed that in Theorem~\ref{thmKomlos}
the bound on the number of uncovered vertices
can be reduced to a constant depending only on~$H$. We conjecture that this
should also be the case for~Theorem~\ref{orekomlos}.

\subsection{Copies of~$H$ covering a given vertex}
In the proof of Theorem~\ref{main} it will be useful to determine the Ore-type
degree condition
which guarantees a copy of~$H$ covering a given vertex of~$G$. 
Let $\delta_{\rm Ore}'(H,n)$ denote the smallest integer~$k$ such that
whenever~$w$ is a vertex of a graph~$G$ of order~$n$ with $d(x)+d(y) \geq k$ for all 
non-adjacent $x \not =y \in V(G)$ then~$G$ contains a copy of $H$ covering~$w$.
Define 
$$
\chi ' _{\rm {Ore}} (H):=\begin{cases}
\chi  (H)& \text{if }CE (H)=\infty ;\\
\chi (H)- \frac{2}{CE(H)+2}& \text{otherwise.}
\end{cases}
$$
\begin{thm}\label{oreHx}
For every graph $H$ and every $\eta >0$ there exists an integer $n_0 =n_0 (H, \eta)$
and a constant $C=C(H)$ such that if $n \geq n_0$ then
$$2\left(1-\frac{1}{ \chi '_{\rm {Ore}}(H)}\right)n-C\le \delta_{\rm Ore}'(H,n) \le
2\left(1-\frac{1}{\chi' _{\rm {Ore}}(H)}+\eta\right)n.$$
\end{thm}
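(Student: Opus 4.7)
The plan is to prove the lower and upper bounds separately, with the lower bound coming from an extremal construction and the upper bound from Szemer\'edi's Regularity Lemma plus the Blow-up Lemma, guided by the $(r-2)$-colouring structure that defines~$CE(H)$.

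For the lower bound I would construct a graph $G^*$ on $n$ vertices with a distinguished vertex $w$ containing no copy of $H$ through~$w$ but with Ore sum just below $2(1-1/\chi'_{\rm Ore}(H))n$. When $CE(H)=\infty$ take $V(G^*)=\{w\}\cup A\cup B$, where $A=A_1\cup\dots\cup A_{r-2}$ is complete $(r-2)$-partite in~$G^*$ with $|A_i|\approx n/r$, $G^*[B]$ is a clique of size $\approx 2n/r$, the pair $(A,B)$ is complete, and $N_{G^*}(w)=A$. A direct calculation equalises the three types of Ore sum at $2(1-1/r)n-O(1)$; since $\chi(G^*[N(w)])=r-2<r-1=\chi(H[N(x)])$ for every $x\in V(H)$, no copy of $H$ can cover~$w$. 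When $CE(H)=k<\infty$, the same template applies but with $G^*[N(w)]$ replaced by an $(r-2)$-partite graph whose part-sizes are tuned so that any $(r-2)$-colouring of $H[N(x)]$ embedded into $G^*[N(w)]$ would require more than $r+k$ colours to be extended to $V(H)\setminus N(x)$ using vertices of $G^*-\{w\}$; balancing the Ore sums then yields the threshold $2(1-1/\chi'_{\rm Ore}(H))n-O(1)$ with additive constant $C\leq 2|H|^4$ as claimed.

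For the upper bound, assume $d(u)+d(v)\geq 2(1-1/\chi'_{\rm Ore}(H)+\eta)n$ for all non-adjacent $u\neq v$. Apply the Regularity Lemma to $G$ to obtain a reduced graph $R$ on clusters $C_1,\dots,C_L$, with $w$ in some cluster $C_w$; standard estimates show that $R$ inherits an Ore-type condition of the form $d_R(C_i)+d_R(C_j)\geq 2(1-1/\chi'_{\rm Ore}(H)+\eta/2)L$ for non-adjacent pairs. Fix a vertex $x\in V(H)$ achieving $CE(H)=k=m_x$ (or any $x$ when $CE(H)=\infty$) and let $c$ be the $(r+k)$-colouring of $H$ extending the $(r-2)$-colouring $c'$ of $H[N(x)]$. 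The task reduces to locating in~$R$ an ``almost-complete'' pattern on $r+k$ clusters realising~$c$ with $C_w$ playing the role of~$x$, in which the $r-2$ clusters assigned to the colours of~$c'$ must be joined to $C_w$ in $R$ (so their vertices lie mostly in $N_G(w)$), while the remaining $k+2$ clusters may lie arbitrarily. A case split according to whether $d_G(w)\geq(1-1/\chi'_{\rm Ore}(H)+\eta/2)n$ or not handles both ``$w$ directly has many neighbours'' and ``$w$'s non-neighbours are near-universal via the Ore bound'', and in each case the Ore condition on~$R$ together with a suitable application of Theorem~\ref{orekomlos} to an auxiliary subgraph produces the required configuration. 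Super-regularising and pinning~$w$ in $C_w$, the Blow-up Lemma then embeds $H$ with $x\mapsto w$.

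The main obstacle is to match the precise numerical threshold $2(1-1/\chi'_{\rm Ore}(H))n$ with the combinatorial pattern at~$w$ in~$R$. When $CE(H)=k\geq 1$ the balance is delicate: the deficit of two colour classes in $c'$ compared with $\chi(H)=r$ buys a weakening of the $(1-1/\chi)$-threshold by exactly $2/(r(k+2)-2)$, and one must verify that this weakening suffices to guarantee the existence of the $r-2$ ``in-$N(w)$'' clusters together with the $k+2$ ``outside'' clusters. Unlike the proof of Theorem~\ref{main}, no $\chi_{cr}$-condition and no hcf-condition enter, since covering a single vertex imposes no divisibility constraint on the rest of~$G$; this is precisely why $\chi'_{\rm Ore}(H)$, rather than $\chi_{\rm Ore}(H)$, appears in the statement.
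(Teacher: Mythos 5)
Your overall plan mirrors the paper's (extremal construction for the lower bound; Regularity-method argument for the upper bound), but there are two concrete gaps.

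\textbf{Lower bound, the case $CE(H)=k<\infty$.} Starting from your $CE(H)=\infty$ construction and ``replacing $G^*[N(w)]$'' by a tuned $(r-2)$-partite graph cannot work if you keep $B=G^*-N(w)-w$ a clique. The argument that no copy of $H$ covers $w$ when $CE(H)=k$ relies on the \emph{whole} graph $G^*$ having chromatic number exactly $r+k-1$: an optimal colouring of $G^*$ then induces an $(r+k-1)$-colouring of any copy $H_w$ of $H$ in which $w$'s neighbourhood sees at most $r-2$ colours, forcing $w$ to play a vertex $x$ with $\chi(H[N(x)])\le r-2$, and by definition of $CE(H)$ extending the $(r-2)$-colouring of $N(x)$ to $H$ requires at least $r+k>r+k-1$ colours. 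If $B$ is a clique of size $\Theta(n)$ then $\chi(G^*)=\Theta(n)$ and the colour-count argument collapses. The construction actually used (Proposition~\ref{prop2}) replaces the clique by a complete $(k+1)$-partite graph on classes of carefully chosen sizes (one of size $st-1$, the rest of size $st$), so $\chi(G^*)=k+r-1$; the part sizes are what produce the exact threshold $2\bigl(1-\tfrac{k+2}{(k+2)r-2}\bigr)n$. You need to modify the non-neighbourhood of $w$, not just $N(w)$.

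\textbf{Upper bound.} Placing $w$ inside a cluster $C_w$ and invoking the Blow-up Lemma ``pinned at $w$'' is not sound as stated. Adjacency of $C_w$ to $r-2$ or $r-1$ clusters in $R$ is a property of $C_w$ as a cluster, but under an Ore-type hypothesis $w$ itself may have very small degree, so $w$ need not have many neighbours in those clusters, and the regularity/super-regularity of the pair $(C_w,V_j)$ says nothing about the single vertex $w$. The paper's Lemma~\ref{exceptional} avoids this by moving $w$ into the exceptional set $V_0$ and then defining $S$ as the set of clusters in which $w$ has at least $\alpha L$ neighbours; the Ore condition then gives $|S|/|R|\ge 1-\tfrac{2(m+2)}{(m+2)r-2}+\eta$, and a greedy clique-building argument in $R$ yields either a $K_r$ with $r-1$ clusters in $S$ or a $K_{r+m}$ with $r-2$ clusters in $S$, after which the Embedding Lemma (not the Blow-up Lemma, which is overkill for embedding a single fixed $H$) produces a copy of $H$ with $w$ in the prescribed role. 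Your invocation of Theorem~\ref{orekomlos} is unnecessary and does not obviously produce the clique pattern you need, and your description of the colourings breaks down in the $CE(H)=\infty$ case (there is no $(r-2)$-colouring of $H[N(x)]$, and $r+k$ is undefined); for that case the paper simply applies Lemma~\ref{generalH} (a $\chi(H)$-packing result) after deleting $O(1)$ vertices to fix divisibility.
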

Theorem~\ref{oreHx} is proved in Section~\ref{sec3}.
As in the case of perfect $H$-packings, the Ore-type degree condition in
Theorem~\ref{oreHx}
does not quite match the bound needed for the corresponding minimum degree version.
Indeed, let $\delta ' (H,n)$ denote the smallest integer $k$ such that whenever~$w$
is a vertex
of a graph~$G$ of order~$n$ with $\delta (G) \geq k$ then~$G$ contains a copy of $H$
covering~$w$.
Together with the Erd\H{o}s-Stone theorem the next result implies that asymptotically
$\delta ' (H,n)$ is the same as the minimum degree needed to force any copy of~$H$
in a graph of order~$n$. 
\begin{prop}\label{minHx}
For every graph $H$ and every $\eta >0$ there exists an integer
$n_0=n_0 (H, \eta)$ such that if $n \geq n_0$ 
then
$$\left(1-\frac{1}{\chi (H)-1}\right)n-1 \leq \delta'(H,n) \leq
\left(1-\frac{1}{\chi (H)-1}+\eta \right)n .$$
\end{prop}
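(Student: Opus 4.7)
Let $r := \chi(H)$.

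\emph{Lower bound.} The Tur\'an graph $G = T_{r-1}(n)$ has $\chi(G) = r-1 < \chi(H)$, so it contains no copy of $H$, and in particular no copy of $H$ covers any specified vertex. Since $\delta(G) = n - \lceil n/(r-1) \rceil \geq (1 - 1/(r-1))n - 1$, this witnesses $\delta'(H,n) \geq (1 - 1/(r-1))n - 1$.

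\emph{Upper bound.} Suppose $\delta(G) \geq (1 - 1/(r-1) + \eta)n$ and let $w \in V(G)$. Fix an $r$-colouring of $H$ with colour classes $C_1, \dots, C_r$ and a vertex $v \in C_1$; the plan is to embed $H$ into $G$ with $v \mapsto w$. I would apply the Szemer\'edi Regularity Lemma to $G$ with parameters $\varepsilon \ll d \ll \eta$, producing clusters $V_0, V_1, \dots, V_L$ of common size $m$ and a reduced graph $R$ with $\delta(R) \geq (1 - 1/(r-1) + \eta/2)L$. A $K_r$ ``blueprint'' in $R$ is obtained as follows. If $w$ lies in some cluster $V_i$, find a $K_{r-1}$ inside $R[N_R(V_i)]$: the link's minimum-degree-to-size ratio is at least $1 - (1/(r-1) - \eta/2)/\lambda$, where $\lambda := |N_R(V_i)|/L \geq (r-2)/(r-1) + \eta/2$, and a direct calculation shows this strictly exceeds the Erd\H{o}s--Stone threshold $(r-3)/(r-2)$ for $K_{r-1}$ for every $\eta > 0$; set $V_{i_1} := V_i$. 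If instead $w \in V_0$, take $V_{i_2}, \dots, V_{i_r}$ inside the ``$w$-good'' subgraph $R^{*} := \{V_j : |V_j \cap N(w)| \geq (d+\varepsilon) m\}$, which contains $K_{r-1}$ by Erd\H{o}s--Stone (averaging shows $|R^{*}| \geq (1 - 1/(r-1) + \eta - o(1))L$, and one checks its minimum-degree-to-size ratio exceeds $(r-3)/(r-2)$ by $\Omega(\eta)$), and extend to $V_{i_1}$ via the common $R$-neighbourhood of $V_{i_2}, \dots, V_{i_r}$, which has size at least $(r-1)\eta L/2$ by $\delta(R)$.

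The embedding of $H$ into $V_{i_1} \cup \dots \cup V_{i_r}$ with $v \mapsto w$ is then obtained via the prescribed-vertex variant of the Koml\'os--S\'ark\"ozy--Szemer\'edi Blow-up Lemma, applied after restricting each $V_{i_j}$ for $j \geq 2$ to $V_{i_j} \cap N(w)$ (which preserves regularity with slightly worse parameters by the slicing lemma). The main obstacle is the selection of a blueprint compatible with $w$; both the case $w \in V_i$ and the case $w \in V_0$ require the $\eta/2$ slack in the minimum degree of $R$ to place the $K_r$ so that $w$ has enough structural adjacency to the ``other'' clusters to serve as a prescribed image.
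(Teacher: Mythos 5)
Your lower bound is exactly the paper's (a complete $(\chi(H)-1)$-partite graph with classes as equal as possible), and your upper bound follows the same overall route the paper has in mind (apply the Regularity Lemma, locate an appropriate $K_r$ in the reduced graph, then embed via regularity), so the strategy is sound. However, the case ``$w$ lies in some cluster $V_i$'' has a genuine gap. Choosing a $K_{r-1}$ inside $R[N_R(V_i)]$ tells you only that the \emph{pairs} $(V_i, V_{i_j})$ are $\eps$-regular of density $>d$; it gives no control over the vertex $w$ itself. Up to $\eps|V_i|$ vertices of $V_i$ may have degree less than $(d-\eps)|V_{i_j}|$ into $V_{i_j}$, and $w$ could well be one of those atypical vertices for every $j$. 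In that case $V_{i_j}\cap N(w)$ can be empty, the slicing lemma does not apply, and the embedding with $v\mapsto w$ cannot be carried out. The standard remedy, which is what the paper does in the proof of Lemma~\ref{exceptional} (whose Case~1 the paper's upper bound is modelled on), is to \emph{always} place $w$ into the exceptional set: after applying the Regularity Lemma, add the whole cluster containing $w$ to $V_0$ (deleting that cluster from $R$), so $w\in V_0$ holds unconditionally and only your second case --- the one using the $w$-good clusters $R^*$, where $|V_j\cap N(w)|$ is bounded below by construction --- ever arises. With that change the argument goes through: your degree computation for $R^*$ and the Erd\H{o}s--Stone threshold $(r-3)/(r-2)$ for $K_{r-1}$ is correct, the extension to a $K_r$-blueprint via the common neighbourhood is fine, and the final embedding step is just Lemma~\ref{emblemma} applied to the restricted (still regular, still dense) pairs; no Blow-up Lemma variant is needed, since a single copy of $H$ rather than a spanning subgraph is required.
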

The lower bound on $\delta '(H,n)$ in Proposition~\ref{minHx} follows by considering
a complete $(\chi (H)-1)$-partite graph~$G$ whose vertex classes are as equal as
possible.
The proof of the upper bound is similar to Case~1 of the proof of
Lemma~\ref{exceptional} (see Section~\ref{sec5}). Details can be found
in~\cite{treglownphd}.%

We will not use Proposition~\ref{minHx} in the proof of Theorem~\ref{main},
but we have included it as it helps to explain the difference between
Theorems~\ref{kuhn}
and~\ref{main}. Indeed,
Theorem~\ref{thmKomlos} and Proposition~\ref{minHx} show that the minimum degree
which ensures an
almost perfect $H$-packing is larger than the minimum degree
which guarantees a copy of $H$ covering any given vertex.
In contrast, Theorems~\ref{orekomlos} and~\ref{oreHx} imply that for some~$H$
this is not true in the Ore-type case. So it is natural that $\delta_{\rm Ore}(H,n)$ 
involves this property
explicitly (since the property that every vertex is contained in a copy of $H$ is
clearly necessary
to ensure a perfect $H$-packing). 
In fact, this is the only real difference to the expression for $\delta(H,n)$ in
Theorem~\ref{kuhn}:
note that we have $\chi_{\rm Ore}(H)=\max \{ \chi^*(H),\chi'_{\rm Ore}(H) \}$
and thus Theorems~\ref{kuhn},~\ref{main} and~\ref{oreHx} imply that
$$
\delta_{\rm Ore}(H,n)=\max \left\{ 2\delta (H,n),\delta'_{\rm Ore}(H,n) \right\}+o(n).
$$
\subsection{Forcing a single copy of $H$}
In view of Theorem~\ref{oreHx}, one might also wonder what Ore-type degree condition
ensures
at least one copy of $H$ (i.e.~we do not require every vertex to lie in a copy of $H$).
It is easy to see that if 
$G$ is of order~$n$ then the condition is similar to the condition on the minimum
degree.
\begin{prop} \label{average}
For every graph $H$ and every $\eta >0$ there exists an integer
$n_0=n_0 (H, \eta)$ such that if $n \geq n_0$ and $G$ is a graph on $n$ vertices
which satisfies
$$
d(x)+d(y) \geq 2 \left( 1-\frac{1}{\chi(H)-1} +\eta \right) n
$$ 
for all non-adjacent $x \not =y \in V(G)$, then $G$ contains a copy of~$H$.
\end{prop}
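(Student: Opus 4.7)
The plan is to derive a lower bound on $e(G)$ from the Ore-type hypothesis using a convexity argument, and then apply the Erd\H{o}s--Stone--Simonovits theorem to locate a copy of $H$.

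First I would sum the Ore-type inequality $d(x)+d(y) \geq k:=2\bigl(1-1/(\chi(H)-1)+\eta\bigr)n$ over all non-adjacent pairs $\{x,y\}\subset V(G)$: writing $N:=\binom{n}{2}-e(G)$,
$$
kN \leq \sum_{\{x,y\}:\,xy\notin E(G)}\bigl(d(x)+d(y)\bigr) = \sum_{v\in V(G)} d(v)\bigl(n-1-d(v)\bigr) = 2(n-1)e(G) - \sum_{v} d(v)^2.
$$
Using Cauchy--Schwarz in the form $\sum_v d(v)^2 \geq (2e(G))^2/n$ and setting $m:=2e(G)/n$, this rearranges, after multiplying out and dividing by $n$, to the quadratic inequality
$$
m^2 - \bigl(n-1+\tfrac{k}{2}\bigr)m + \tfrac{k(n-1)}{2} \leq 0, \quad\text{equivalently}\quad (n-1-m)(m-k/2) \geq 0.
$$
Since every vertex has degree at most $n-1$ we always have $m\leq n-1$, so either $G=K_n$ (and then $n\geq|H|$ already forces $G\supset H$) or $m\geq k/2$. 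In the latter case
$$
e(G) \geq \frac{nk}{4} = \Bigl(1-\frac{1}{\chi(H)-1}+\eta\Bigr)\frac{n^2}{2}.
$$

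Second I would invoke the Erd\H{o}s--Stone--Simonovits theorem, which gives $\mathrm{ex}(n,H)=\bigl(1-1/(\chi(H)-1)\bigr)\binom{n}{2}+o(n^2)$. For $n$ sufficiently large in terms of $H$ and $\eta$ the lower bound on $e(G)$ strictly exceeds $\mathrm{ex}(n,H)$, so $G$ contains a copy of $H$.

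There is no substantive obstacle here; the only delicate point is ensuring that the $+\eta n$ slack in the Ore-type hypothesis translates into a slack of order $\eta n^2$ above the Tur\'an threshold, which the explicit factorisation $(n-1-m)(m-k/2)\geq 0$ of the quadratic arising from Cauchy--Schwarz handles cleanly.
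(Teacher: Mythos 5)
Your proof is correct, and it takes a genuinely different route from the paper's for the key intermediate step. The paper first isolates the claim that the Ore-type hypothesis forces average degree at least $k$ (Proposition~\ref{av2}) and proves it by a combinatorial argument: it lets $A$ be the vertices of degree $<k$, observes $A$ induces a clique in $G$, applies Hall's theorem to the bipartite graph of non-edges between $A$ and $V(G)\setminus A$ to find a matching of $\overline{G}$-edges covering $A$, and then pairs up the degree deficits along this matching. You instead obtain the same average-degree bound analytically, by summing the Ore inequality over all non-adjacent pairs, applying Cauchy--Schwarz to $\sum_v d(v)^2$, and factoring the resulting quadratic in the average degree $m$ as $(n-1-m)(m-k/2)\ge 0$; the algebra checks out, including the degenerate case $m=n-1$ (i.e.\ $G=K_n$). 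After that, both arguments invoke Erd\H{o}s--Stone in the same way. The paper's Hall-based argument is perhaps a bit more structural (and yields a matching certificate), while yours is shorter and more self-contained; either is a clean proof of the proposition.
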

Proposition~\ref{average} 
immediately follows from the Erd\H{o}s-Stone theorem and the following observation
(which we expect to be known, but we were unable to find a reference):
\begin{prop} \label{av2}
Let $G$ be a graph with $d(x)+d(y) \geq 2k$ for all 
non-adjacent $x \not =y \in V(G)$. Then $G$ has average degree at least~$k$.
\end{prop}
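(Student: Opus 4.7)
The plan is to pass to the complement $\overline{G}$, in which the Ore-type \emph{lower} bound on degree sums over non-adjacent pairs of $G$ becomes an edgewise \emph{upper} bound on degree sums in $\overline{G}$, and then to extract an average-degree bound by combining this with Cauchy--Schwarz. Write $n=|V(G)|$ and $m=n-1-k$. Since $d_G(v)+d_{\overline G}(v)=n-1$ for every vertex~$v$, the hypothesis $d_G(x)+d_G(y)\ge 2k$ for non-adjacent $x\ne y$ in $G$ is exactly equivalent to
\[
d_{\overline G}(x)+d_{\overline G}(y)\le 2m \qquad\text{for every edge } xy\in E(\overline G).
\]

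Next, I would sum this inequality over all edges of $\overline G$. Since each vertex $v$ contributes $d_{\overline G}(v)$ times to that sum, the left-hand side equals $\sum_{v} d_{\overline G}(v)^2$, while the right-hand side equals $m\cdot 2e(\overline G)=m\sum_{v}d_{\overline G}(v)$. Setting $S:=\sum_{v}d_{\overline G}(v)$, this yields $\sum_v d_{\overline G}(v)^2\le mS$. Cauchy--Schwarz now supplies the matching lower bound $\sum_v d_{\overline G}(v)^2\ge S^2/n$, so provided $S>0$ we may divide by $S$ to conclude $S/n\le m=n-1-k$. In other words, the average degree of $\overline G$ is at most $n-1-k$, and since the average degrees of $G$ and $\overline G$ sum to $n-1$, the average degree of $G$ is at least $k$.

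The only case in which this argument does not immediately go through is $S=0$, i.e.\ when $G=K_n$ and the Ore-type hypothesis is vacuous; here the conclusion holds as soon as $n\ge k+1$, which is the only regime in which the proposition is actually to be applied (in the proof of Proposition~\ref{average} one has $k<n$). I do not anticipate any real obstacle: once the translation to the complement is made, the whole proof is a single application of Cauchy--Schwarz, and the content of the argument is just the observation that the Ore-type condition on $G$ is precisely the right edgewise statement on $\overline G$ to control $\sum_v d_{\overline G}(v)^2$ from above.
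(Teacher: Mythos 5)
Your argument is correct, and it takes a genuinely different route from the paper's. The paper splits $V(G)$ into the set $A$ of vertices of degree less than $k$ and its complement $B$, observes that $A$ must be a clique in $G$, and uses Hall's theorem to find a matching in $\overline{G}$ between $A$ and $B$ covering all of $A$; applying the Ore condition to each matched pair (which are non-adjacent in $G$) and using $d(v)\ge k$ for the unmatched vertices of $B$ gives $\sum_v d(v)\ge kn$. You instead pass wholesale to $\overline{G}$, reinterpret the Ore condition as the edgewise bound $d_{\overline{G}}(x)+d_{\overline{G}}(y)\le 2(n-1-k)$ for $xy\in E(\overline{G})$, sum over edges to get $\sum_v d_{\overline{G}}(v)^2 \le (n-1-k)\sum_v d_{\overline{G}}(v)$, and close with Cauchy--Schwarz. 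Your route is shorter and purely averaging-based: it sidesteps Hall's theorem and the low/high-degree dichotomy, and it makes transparent that the Ore condition is exactly the right edgewise control on $\sum_v d_{\overline{G}}(v)^2$. The paper's proof is more structural, making the mechanism (pairing deficient vertices with surplus ones) explicit. The degenerate case $S=0$ (i.e.\ $G=K_n$) that you flag is handled sensibly; note that the paper's Hall argument has a comparable silent boundary case when $k=n-1$, where the lower bound $|X|(n-k-1)$ degenerates, so your proposal is on equal footing there.
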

To prove Proposition~\ref{av2}, let $A$ be the set of vertices in $G$ whose degree
is less than~$k$
and let $B$ be the set of remaining vertices. Let $\overline{G}$ denote the
complement of $G$
and let $F$ denote the bipartite subgraph of $\overline{G}$ induced by $A$ and $B$.
Hall's theorem implies that $F$ has a matching covering all of $A$ (Hall's condition
can be
verified by noting that for all $X \subseteq A$ the number of edges in $F$ between
$X$ and the 
neighbourhood of $X$ is at least $|X|(n-k-1)$ and at most $|N(X)|(n-k-1)$).
Now apply the Ore-type degree condition to all pairs of vertices of $G$ which are
contained in this matching.

\subsection{Other structures}
As mentioned earlier, packing and embedding results in graphs of large minimum degree
have also been studied for other structures. It would be interesting to obtain Ore-type analogues
for some of these: e.g.~for the P\'osa-Seymour conjecture
which states that every graph $G$ on $n$ vertices with
$\delta (G)\geq \frac{r}{r+1} n$ contains the $r$th power of a Hamilton cycle. \cite{kky}
contains a discussion of other Ore-type results.

\section{Notation and extremal examples}\label{sec2}
Throughout this paper we omit floors and ceilings whenever this does not affect the
argument.
We write $e(G)$ to denote the number of edges of a graph~$G$, $|G|$ for its order,
$\delta (G)$
and $\Delta (G)$ for its minimum and maximum degrees respectively and $\chi (G)$ for
its chromatic number.

Given disjoint $A,B \subseteq V(G)$, an \emph{$A$-$B$ edge} is an edge of $G$ with one
endvertex in~$A$ and the other in $B$. The number of these edges is denoted by $e_G
(A,B)$ or
$e (A,B)$ if this is unambiguous. We write $(A,B)_G$ for the bipartite subgraph of~$G$
with vertex classes $A$ and $B$ whose edges are precisely the $A$-$B$ edges in $G$.

Let us now prove the lower bound in Theorem~\ref{main}.
The next proposition deals with the case when $CE (H) = \infty$.

\begin{prop}\label{prop1}
Let $H$ be a graph with $CE (H) = \infty$. Let $n\ge |H|$.
Then there exists a graph $G$ of order $n$ with 
$$d(x)+d(y) \geq 2\left(1-\frac{1}{\chi(H)}\right)n-2$$ for all non-adjacent $x\not
=y \in V(G)$
containing a vertex that does not belong to a copy of~$H$. (In particular, $G$ has no
perfect $H$-packing.)
\end{prop}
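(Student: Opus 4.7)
The plan is to exhibit an explicit graph $G$ on $n$ vertices containing a distinguished vertex $w$ whose neighbourhood is $(r-2)$-colourable in $G$, where $r:=\chi(H)$. The assumption $CE(H)=\infty$ unpacks to $\chi(H[N_H(x)])=r-1$ for every $x\in V(H)$; hence if $w$ ever played the role of a vertex $x$ in some copy of $H$, the corresponding copy of $H[N_H(x)]$ would embed into $G[N_G(w)]$, forcing $\chi(G[N_G(w)])\ge r-1$ and contradicting the construction.

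Concretely, I would set $a:=\lceil n/r\rceil$ and $b:=n-1-(r-2)a$, then take pairwise disjoint sets $V_1,\dots,V_{r-2}$ each of size $a$, a set $U$ of size $b$, and the special vertex $w$. The edges of $G$ would be: $w$ joined to every vertex of $V_1\cup\dots\cup V_{r-2}$; a complete $(r-1)$-partite graph on $V_1,\dots,V_{r-2},U$ with these as its classes; and all edges inside $U$, so that $G[U]$ is a clique. Then $N_G(w)=V_1\cup\dots\cup V_{r-2}$ is $(r-2)$-partite, which is exactly the colouring property needed above.

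Next I would verify the Ore-type condition. The only non-adjacent pairs in $G$ are (i) $w$ together with some $u\in U$, and (ii) two distinct vertices inside a common class $V_i$. For (i), every $u\in U$ is joined to every other non-$w$ vertex, so $d(u)=n-2$; substituting $a\ge n/r$ yields $d(w)+d(u)=(r-2)a+(n-2)\ge 2(1-1/r)n-2$. For (ii), using $(r-2)a+b=n-1$ one computes $d(v)=1+(r-3)a+b=n-a$ for $v\in V_i$, so $d(v)+d(v')=2(n-a)\ge 2(1-1/r)n-2$, the last step using $a\le n/r+1$.

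I do not anticipate any serious obstacle: the non-embedding of $w$ boils down to a single chromatic-number comparison, and the Ore computation is a short calculation. The only mild point of care is that $b$ must be nonnegative (and ideally at least $1$, so that there is a non-trivial Ore pair of type (i)); this holds easily once $n$ is not too small, and for the remaining handful of small values one can instead exhibit a suitable complete $(r-1)$-partite graph, which contains no copy of $H$ at all and so trivially has a vertex lying in none.
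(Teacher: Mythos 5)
Your central idea is exactly the paper's: build a graph $G$ with a distinguished vertex $w$ whose neighbourhood induces an $(r-2)$-partite subgraph, and then observe that since $CE(H)=\infty$ forces $\chi(H[N_H(x)])=r-1$ for every $x\in V(H)$, the vertex $w$ can play the role of no vertex of $H$. That part is correct, and so is your Ore calculation whenever $b:=n-1-(r-2)\lceil n/r\rceil\ge 0$. The gap is that the rigid choice $a=\lceil n/r\rceil$ can give $b<0$ for a substantial range of $n$: writing $n=qr+s$ with $0\le s<r$, one has $b\ge 0$ precisely when $s=0$ or $2q+s\ge r-1$, so (for instance) $n=r+1$ with any $r\ge 5$ already breaks the construction, and so do on the order of $r^2/4$ values of $n$ between $r$ and roughly $r^2/2$. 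You flag this, but the proposed fallback — a balanced complete $(r-1)$-partite graph — does not in general satisfy the Ore condition: two non-adjacent vertices in its largest class have degree sum $2\bigl(n-\lceil n/(r-1)\rceil\bigr)$, and the needed inequality $\lceil n/(r-1)\rceil\le n/r+1$ fails for most $n$ (in particular for every $n>r(r-1)$, and for many smaller $n$ such as $n=19$, $r=10$). It does happen that the two constructions' ranges of validity are complementary for $n\ge r$, but checking this requires a separate arithmetic case analysis that the proposal neither performs nor signals as necessary.

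The cleaner route, which is the paper's, is not to fix all the $V_i$ to the same size. Start from a balanced complete $r$-partite graph on $n$ vertices (classes of sizes $\lfloor n/r\rfloor$ or $\lceil n/r\rceil$), let $V_1',V_2'$ be the two smallest classes (so $|V_1'|+|V_2'|\le 2n/r$ automatically), move all but one vertex $w$ of $V_1'$ into $V_2'$, turn the resulting set $V_2$ into a clique, and delete the edges between $w$ and $V_2$. Then $d(w)=n-|V_1'|-|V_2'|\ge n-2n/r$ with no constraint on $n$ beyond $n\ge r$, every $z\in V_2$ has $d(z)=n-2$, and the remaining non-adjacent pairs lie inside classes of size at most $\lceil n/r\rceil$, so the Ore bound follows directly. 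In short: replace your uniform $a=\lceil n/r\rceil$ with a balanced mix of $\lfloor n/r\rfloor$ and $\lceil n/r\rceil$ and the fallback becomes unnecessary.
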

\begin{proof} Let $r:=\chi (H)$. Consider the complete $r$-partite graph of
order~$n$ whose vertex 
classes $V'_1 , V'_2,V_3,\dots,V_r$ have sizes as equal as possible, where
$|V'_1|\le |V'_2|\le |V_3|\le \dots\le |V_r|$. Note that%
      \COMMENT{Indeed, $n=|V'_1|+|V'_2|+|V_3|+\dots+|V_r|\ge
\frac{r}{2}(|V'_1|+|V'_2|)$.}
$n-|V'_1|-|V'_2|\ge n-2n/r$.

Let~$G$ be obtained from this graph by moving
all but one vertex, $w$ say, from~$V'_1$ to~$V'_2$, by making the set $V_2\supseteq
V'_2$ thus
obtained from~$V'_2$ into a clique and by deleting all the edges between~$w$ and the
vertices
in~$V_2$.

Any vertex $y \in V_3\cup \dots\cup V_r$ satisfies  $d(y) \ge n-\lceil\frac{n}{r}
\rceil\geq (1-1/\chi (H))n-1$. Thus $d(y_1)+d(y_2) \ge 2(1-1/\chi(H))n-2$
for all non-adjacent $y_1 \not =y_2 \in V(G) \backslash (\{w\} \cup V_2)$.
Moreover, $d(w)=n-|V'_1|-|V'_2|\ge n-2n/r$ and for any $z \in V_2$ we have $d(z)=n-2$.
So $d(w)+d(z) \geq 2(1-1/\chi(H))n-2$. Hence~$G$ satisfies our Ore-type degree
condition.

The neighbourhood of $w$ in $G$ induces an $(r-2)$-partite subgraph of $G$.
Therefore, since $\chi (H[N(x)])=r-1$ for all $x \in V(H)$, $w$ cannot 
play the role of any vertex in $H$. So~$G$ does not contain a copy of~$H$ covering~$w$.
\end{proof}

The following proposition will be used for the case when~$H$ is non-bipartite and
$CE (H)<\infty$.

\begin{prop} \label{prop2}
Let $H$ be a graph with $r:=\chi (H)\geq 3$ for which $m:=CE (H)<\infty$. Then there 
are infinitely many graphs $G$ whose order $n$ is divisible by $|H|$ and such that 
$$d(x)+d(y) \geq 2\left(1-\frac{1}{r-\frac{2}{m+2}}\right)n-1$$ for all non-adjacent
$x\not =y \in V(G)$ containing a vertex that does not belong to a copy of~$H$.
(In particular, $G$ has no perfect $H$-packing.)
\end{prop}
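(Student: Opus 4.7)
The plan is to exhibit, for infinitely many integers $n$ divisible by $|H|$, a graph $G$ on $n$ vertices that satisfies the stated Ore-type bound and contains a vertex belonging to no copy of $H$. Set $\alpha := r - 2/(m+2)$. For an integer parameter $k \geq 1$ let
$$V(G) := \{w\} \cup C_1 \cup \dots \cup C_{r-2} \cup D_1 \cup \dots \cup D_{m+1},$$
where the $C_i$'s and $D_j$'s are pairwise disjoint independent sets of sizes $|C_i| = (m+2)k+1$ and $|D_j| = 2k$; include every edge between vertices of different parts, and let $w$ be adjacent precisely to the vertices of $C_1 \cup \dots \cup C_{r-2}$. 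Then $n := |V(G)| = (r-1) + k(r(m+2)-2)$. By adding some $S \in \{0,1,\dots,|H|-1\}$ extra vertices to $C_1$ (keeping it independent) I can ensure $|H| \mid n$ for every $k$, yielding infinitely many valid graphs.

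To verify the Ore-type degree condition, the non-adjacent pairs fall into three types: (a) $u,v$ both in some $C_i$ give $d(u)+d(v)=2(n-|C_i|)$; (b) $u,v$ both in some $D_j$ give $d(u)+d(v)=2(n-|D_j|-1)$; (c) $u=w$ and $v \in D_j$ give $d(u)+d(v)=(r-2)((m+2)k+1)+n-2k-1$. A direct calculation shows each of these is at least $2(1-1/\alpha)n - 1$; all three inequalities reduce to the arithmetic fact $2(m+2)(r-1) \geq r(m+2)-2$ (with slack $(m+2)(r-2)+2$). The ratio $(m+2):2$ between the part sizes is forced by constraint (c), which is the binding one; adding the $S$ extra vertices to $C_1$ only strengthens the inequalities.

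Finally, I would show that $w$ lies in no copy of $H$. Suppose for a contradiction that $\phi \colon H \hookrightarrow G$ is an embedding with $\phi(x)=w$ for some $x \in V(H)$. Since $\phi(N_H(x)) \subseteq N_G(w) = C_1 \cup \dots \cup C_{r-2}$, sending each $v \in N_H(x)$ to the index $i$ for which $\phi(v) \in C_i$ defines a proper $(r-2)$-colouring $c$ of $H[N(x)]$. Likewise $\phi(V(H) \setminus N[x]) \subseteq D_1 \cup \dots \cup D_{m+1}$ provides an $(m+1)$-colouring of $H[V(H) \setminus N[x]]$ using $m+1$ new colours. As $N_H(x)$ uses only the first $r-2$ colours, $x$ itself can receive any of these new colours, yielding a proper $(r+m-1)$-colouring of $H$ that extends $c$. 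But $CE(H)=m$ implies $m_x \geq m$, so every extension of $c$ to a proper colouring of $H$ needs at least $r+m_x \geq r+m$ colours, a contradiction. The principal technical obstacle is calibrating the part sizes so that all three Ore constraints hold simultaneously; once this is done, the colouring argument is a direct consequence of the definition of the colour extension number.
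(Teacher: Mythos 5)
Your construction has the same overall shape as the paper's: a complete $(r+m-1)$-partite-like graph with a distinguished vertex $w$ adjacent only to $r-2$ of the classes, so that $N_G(w)$ is $(r-2)$-chromatic while the rest forces $m+1$ new colours. Your colouring argument for why $w$ lies in no copy of $H$ is correct and matches the paper's. However, there is a genuine gap in how you arrange for $|H|$ to divide $n$, and the specific claim you make there is false.

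You claim that after adding $S \in \{0,\dots,|H|-1\}$ extra vertices to $C_1$, ``the $S$ extra vertices to $C_1$ only strengthens the inequalities.'' This is wrong for pairs $u,v$ inside $C_1$: adding $S$ vertices to $C_1$ increases both $n$ and $|C_1|$ by $S$, so $d(u)+d(v)=2(n-|C_1|)$ is unchanged, while the target $2(1-1/\alpha)n-1$ strictly increases. Since the slack in your inequality (a) is the fixed constant $\frac{(r-2)(m+2)+2}{r(m+2)-2}$, independent of $k$, even $S=1$ breaks constraint (a) once $(r-1)(m+2)>4$ (e.g.\ $r=4$, $m=1$ already requires $S\le 4/7$, hence $S=0$). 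So the step that is supposed to produce infinitely many $n$ divisible by $|H|$ does not go through. The paper sidesteps this by choosing $n=|H|t$ with $t$ a multiple of $((m+2)r-2)(r-2)$, so that \emph{all} class sizes are integers in the exact ratio $(m+2):2$ and the order is automatically a multiple of $|H|$. An analogous fix for your parametrization is to set $|C_i|=(m+2)k$, $|D_1|=2k-1$, $|D_j|=2k$ for $j\ge 2$, and restrict to $k$ divisible by $|H|$; then $n=k(r(m+2)-2)$ is divisible by $|H|$ and all three Ore constraints hold with nonnegative slack. In short: the construction and colouring contradiction are right; the divisibility adjustment must be absorbed into the scaling of the part sizes rather than done by padding one class, because the Ore inequality here is tight up to an additive constant.
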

\begin{proof}
Let $t \in \mathbb N$ be such that $((m+2)r-2)(r-2)$ divides $t$. Define
$s:=2|H|/((m+2)r-2)$.
Let $G'$ be the complete $(r+m-1)$-partite graph with one vertex class $V_1$ of size
$st-1$,
$m$ vertex classes $V_{2},\dots,V_{m+1}$ of size $st$ and $r-2$ vertex classes
$V_{m+2}, \dots ,V_{r+m-1}$ of size $\frac{|H|t-(m+1)st}{r-2}$.
Let $G$ be obtained from $G'$ by adding a vertex $w$ to $G'$ such that $w$
is adjacent to precisely those vertices in $V_{m+2} \cup \dots \cup V_{r+m-1}$. So
$|G|=|H|t$.

Any $y \in V_1 \cup \dots \cup V_{m+1}$ satisfies
$$d(y)+d(w) \geq 2|H|t-(m+2)st-1=2\left(1-\frac{m+2}{(m+2)r-2}\right)|G|-1.$$
Furthermore, given any $y_1 \not = y_2 \in V_i$ for some $m+2 \le i \le r+m-1$, we have
\begin{align*}
d(y_1)+d(y_2)& =2|H|t-2\left( \frac{|H|t-(m+1)st}{r-2} \right)=
2|G|-\frac{2}{r-2}\left( 1- \frac{2(m+1)}{(m+2)r-2}\right)|G|
\\ & =2|G|- \frac{2}{r-2} \frac{(m+2)(r-2)}{(m+2)r-2} |G|=2\left(1-
\frac{m+2}{(m+2)r-2}\right)|G|.
\end{align*}
Since $d(y)+d(y') \geq d(y)+d(w)$ for any $y \not =y' \in V_i$ with $1 \leq i \leq m+1$
this implies that $G$ satisfies our Ore-type degree condition.

Suppose that~$w$ belongs to some copy $H_w$ of $H$ in~$G$. Since $\chi(G)=m+r-1$,
an optimal colouring of $G$ induces an $(m+r-1)$-colouring of $H_w$ and an
$(r-2)$-colouring of
$G[N(w)]$. But then $w$ must be playing the role of a vertex $x \in V(H)$ such that
$\chi (H[N(x)])\leq r-2$, contradicting the definition of $m=CE(H)$.
\end{proof}

We will now use Propositions~\ref{prop1} and~\ref{prop2} to prove the lower bound of
Theorem~\ref{main}.

{\removelastskip\penalty55\medskip\noindent{\bf Proof of Theorem~\ref{main} (lower
bound). }}
In the case when ${\rm hcf} (H) \not =1$ the lower bound follows from the lower
bound in Theorem~\ref{kuhn}. Proposition~\ref{prop1} settles the case when $CE (H) =
\infty$.
So we may assume that ${\rm hcf} (H)  =1$ and $CE (H) <\infty$.
In this case, the lower bound in Theorem~\ref{kuhn} also implies that
\begin{equation}\label{eq:lowerchicr}
\delta_{\rm Ore} (H,n) \geq 2(1-1/\chi _{cr} (H))n-2
\end{equation}
(for any graph $H$). Suppose first that $H$ is bipartite. Since $CE (H) <\infty$ this
means that $H$ must have an isolated vertex and so $CE (H)=0$. Thus
$\chi _{\rm Ore} (H)= \chi _{cr} (H)$ and so we are done by~(\ref{eq:lowerchicr}).

So suppose next that $\chi(H)\ge 3$. In this case the proof of
Proposition~\ref{prop2} implies
the lower bound whenever~$n$ is divisible by  $((m+2)r-2)(r-2)|H|$.
To deduce the lower bound for any~$n\ge ((m+2)r-2)(r-2)|H|$ which is divisible by $|H|$
we proceed as follows. Let $n'$ be the largest integer such that $n' \leq n$ and
$n'$ is divisible by 
$((m+2)r-2)(r-2)|H|$. Construct a graph $G$ of order $n'$ as in the proof of
Proposition~\ref{prop2}.
Then add $n-n'< ((m+2)r-2)(r-2)|H|$ new vertices to~$V_1$ so 
that these vertices have the same neighbourhoods as the original vertices in~$V_1$.
Then $|G|=n$
and by the same argument as in Proposition~\ref{prop2}, $G$ does not contain a
perfect $H$-packing.
Moreover, it is easy to check that 
$d(x)+d(y) \geq 2(1-1/(r-2/(m+2)))n-2|H|^4$ for all non-adjacent%
      \COMMENT{get $d(x)+d(y) \geq 2\left(1-\frac{1}{r-\frac{2}{m+2}}\right)n'-1\ge
2\left(1-\frac{1}{r-\frac{2}{m+2}}\right)(n-|H|^4+|H|)-1\ge
2\left(1-\frac{1}{r-\frac{2}{m+2}}\right)n-2|H|^4$}
$x\not =y \in V(G)$.
\endproof

\section{Some useful results}\label{sec3}
In Section~\ref{sec2} we proved the lower bound on $\delta_{\rm Ore} (H,n)$ in
Theorem~\ref{main}.
The following two results together imply the upper bound.
\begin{lemma}\label{generalH}
Let $H$ be a graph and let $\eta >0$. There exists an integer $n_0=n_0(H, \eta)$
such that
if $G$ is a graph whose order $n\geq n_0$ is divisible by $|H|$ and
$$d(x)+d(y) \geq 2 \left( 1 - \frac{1}{\chi (H)} + \eta \right) n$$ for all
non-adjacent
$x\not=y \in V(G)$ then $G$ contains a perfect $H$-packing.
\end{lemma}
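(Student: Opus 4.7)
The plan is to combine Szemer\'edi's Regularity Lemma with the Kierstead--Kostochka Ore-type version of the Hajnal--Szemer\'edi theorem cited in the introduction. Set $r:=\chi(H)$ and choose constants $\varepsilon \ll d \ll \eta$. Apply the Regularity Lemma to $G$ to obtain an exceptional set $V_0$ with $|V_0|\le \varepsilon n$ and clusters $V_1,\dots,V_k$ of common size $L$, together with the reduced graph $R$ on $\{V_1,\dots,V_k\}$ whose edges correspond to $\varepsilon$-regular pairs of density at least~$d$. I would then check that $R$ inherits an Ore-type condition of the form $d_R(V_i)+d_R(V_j) \ge 2(1-1/r+\eta/2)k$ for every non-adjacent pair in~$R$: given a non-edge $V_iV_j$ of $R$, the pair $(V_i,V_j)$ fails to be $\varepsilon$-regular or has density less than~$d$, and in either case one can choose representative vertices $x\in V_i$ and $y \in V_j$ that are non-adjacent in~$G$ and whose degrees in~$G$ translate (up to negligible error coming from~$V_0$ and the at most $\varepsilon k^2$ irregular pairs) into the degrees of $V_i$ and $V_j$ in~$R$.

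After throwing $O(1)$ clusters into $V_0$ to ensure that $r$ divides~$k$, the theorem of Kierstead and Kostochka yields a perfect $K_r$-packing of~$R$. Each $K_r$ in this packing corresponds to an $r$-tuple of clusters that is pairwise $\varepsilon$-regular of density at least~$d$; by moving $O(\varepsilon L)$ atypical vertices from each cluster into $V_0$, the tuple becomes $(2\varepsilon,d/2)$-super-regular. Within each super-regular $r$-tuple, the Blow-up Lemma (or a direct greedy embedding as in~\cite{ay}) produces a perfect $H$-packing provided the cluster sizes in the tuple are divisible by appropriate colour class sizes of~$H$. This divisibility condition can be arranged by shuffling a bounded number of vertices between clusters of each tuple at the outset, using a fixed $r$-colouring of~$H$.

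The main obstacle, and the only place where the $\eta$-slack is really needed, is handling the exceptional set $V_0$ (enlarged by the vertices moved into it above), which has total size $O(\varepsilon n)$. The standard remedy is to perform an absorbing step \emph{before} the main embedding: use the Ore-type condition to show that for every potential exceptional vertex $w\in V(G)$ there are many copies of~$H$ containing~$w$, and deduce (by a probabilistic deletion argument similar to those in~\cite{kuhn2}) the existence of a small ``absorbing'' $H$-packing that can swallow any leftover vertex set of size $O(\varepsilon n)$. Since $2(1-1/r+\eta)n > 2(1-1/r)n$, the $\eta n$ surplus in the Ore condition guarantees enough flexibility both to find the absorbing structure and to carry out the swaps. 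Once $V_0$ is absorbed into copies of~$H$, the clusters still form super-regular $r$-tuples (after possibly adjusting boundedly many vertices to restore super-regularity and divisibility), and the embedding of the previous paragraph completes the proof. An alternative to the absorbing step is to process the vertices of~$V_0$ one by one: for each $w\in V_0$, find an $r$-tuple of the packing and a vertex $w'$ in it whose role can be taken over by~$w$; the Ore condition ensures that such tuples and vertices exist throughout.
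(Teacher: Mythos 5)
Your approach is genuinely different from the paper's. The paper proves Lemma~\ref{generalH} as a short reduction to Lemma~\ref{hcf}: set $r:=\chi(H)$ and build an auxiliary nearly-complete $(r{+}1)$-partite graph $H'$ (a large complete $(r{+}1)$-partite graph with one singleton class, minus one edge at that singleton) which has ${\rm hcf}(H')=1$, $CE(H')=0$, $\chi_{cr}(H')$ arbitrarily close to $\chi(H)$, and a perfect $H$-packing; after deleting a bounded number of copies of $H$ from $G$ so that the remaining order is divisible by $|H'|$, Lemma~\ref{hcf} applied to $H'$ yields a perfect $H'$-packing, hence a perfect $H$-packing. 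You instead propose a direct argument: Regularity Lemma, inherited Ore condition for $R$, Kierstead--Kostochka to obtain a perfect $K_r$-packing of $R$, Blow-up within each super-regularized $K_r$-tuple, and absorption of exceptional vertices. This is essentially the Alon--Yuster scheme with Hajnal--Szemer\'edi replaced by its Ore-type analogue, and it is a plausible parallel route; but it rebuilds machinery that Lemma~\ref{hcf} already encapsulates, so the paper's reduction is considerably shorter.

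More importantly, your treatment has a substantive gap at the crucial step. The absorbing (or ``process $V_0$ one by one'') argument rests on the assertion that the Ore condition $d(x)+d(y)\ge 2(1-1/r+\eta)n$ forces every vertex of $G$ into many copies of $H$. This is not a triviality: in the paper it is the content of Lemma~\ref{exceptional}, which has its own regularity-based proof and is only stated for $CE(H)<\infty$. You neither invoke such a lemma nor supply a substitute. When $CE(H)=\infty$ a naive ``$N(w)$ contains a blown-up $K_{r-1}$'' argument does not follow directly from the degree bound $\delta(G)\ge(1-2/r+2\eta)n$ for $r\ge 4$; what one actually needs is the counting argument over the $K_r$-tuples of the reduced-graph factor (that on average a tuple meets the ``$w$-good'' clusters in more than $r-2$ clusters, so $\Omega(\eta|R|)$ tuples meet them in $\ge r-1$), which you gesture at only in the last sentence. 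The divisibility fix-up ``by shuffling a bounded number of vertices between clusters of each tuple'' is likewise left unexplained -- clusters are disjoint, so the actual mechanism is removing vertices to $V_0$ and subsequently absorbing them, which feeds back into the same missing lemma. In short: correct high-level strategy, but the step you label as ``the standard remedy'' is in fact the hardest part and requires an argument at the level of Lemma~\ref{exceptional}, which you have not provided.
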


\begin{lemma}\label{hcf}
Let $\eta>0$ and suppose that $H$ is a graph such that ${\rm hcf }(H)=1$ and $CE
(H)< \infty$.
There exists an integer $n_0=n_0(H, \eta)$ such that 
if $G$ is a graph whose order $n\geq n_0$ is divisible by $|H|$ and
\begin{align}\label{oreG}
d(x)+d(y) \geq \max \left\{ 2   \left( 1 - \frac{1}{\chi(H)-\frac{2}{CE (H)+2}} +
\eta \right)n,
2  \left( 1 - \frac{1}{\chi _{cr} (H)} + \eta  \right) n \right\}
\end{align}
for all non-adjacent $x\not =y \in V(G)$ then $G$ contains a perfect $H$-packing.
\end{lemma}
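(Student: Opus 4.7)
The plan is to combine Theorem~\ref{orekomlos} with the absorbing method. The two terms in the maximum in~(\ref{oreG}) correspond to two distinct obstructions to a perfect $H$-packing: the $\chi_{cr}(H)$-bound is exactly what Theorem~\ref{orekomlos} requires for an almost-perfect $H$-packing of a graph of order~$n$, while the $\chi(H)-\tfrac{2}{CE(H)+2}$-bound is precisely the Ore-type threshold in Theorem~\ref{oreHx} for forcing a copy of~$H$ through a specified vertex of~$G$. Each bound will be used for the role it was designed for.

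Concretely, I would first set aside a small ``absorbing'' set $A \subseteq V(G)$, with $|A| \le \zeta n$ for some $\zeta \ll \eta$, chosen so that $|A|$ is divisible by $|H|$ and so that for every $W \subseteq V(G) \setminus A$ with $|W| \le \zeta^2 n$ and $|W|$ divisible by $|H|$, the graph $G[A \cup W]$ admits a perfect $H$-packing. Since $|A| \ll \eta n$, the graph $G - A$ still satisfies a slightly weakened Ore-type hypothesis of Theorem~\ref{orekomlos}, which produces an $H$-packing $\mathcal M$ of $G - A$ leaving a residual set $W$ of size at most $\zeta^2 n$. Divisibility of $n$, $|A|$ and $|H|\cdot|\mathcal M|$ by $|H|$ forces $|H|$ to divide $|W|$, and the absorbing property then packs $G[A \cup W]$ perfectly, giving, together with $\mathcal M$, a perfect $H$-packing of~$G$.

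The main obstacle is the construction of $A$. The standard strategy is to show that every $v \in V(G)$ lies in polynomially many small ``$v$-absorbers'' -- bounded-size subsets $F_v \subseteq V(G)\setminus\{v\}$ such that both $G[F_v]$ and $G[F_v \cup \{v\}]$ can be packed by copies of $H$, the two packings differing by a single shift in colour-class sizes -- and then to take $A$ to be a random union of such absorbers, chosen via a standard probabilistic argument so that it simultaneously absorbs every small divisible $W$. Here the hypothesis $\mathrm{hcf}(H)=1$ is crucial: it guarantees that one can combine optimal colourings of $H$ whose colour-class sizes differ by exactly one, so that the same cluster-sized configuration can be re-packed either with or without an additional vertex. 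The $CE(H)$-term in~(\ref{oreG}), via Theorem~\ref{oreHx}, ensures that every vertex lies in a copy of $H$, which is the seed needed to build an absorber at \emph{every} $v$; turning that one copy into polynomially many absorbers anchored at~$v$ would be the technical heart of the proof, presumably carried out via the Regularity Lemma applied to $G$ together with a structural analysis of its reduced graph. The cases $\chi(H)\ge 3$ and $\chi(H)=2$ may need separate treatment, since $\mathrm{hcf}(H)$ is defined differently in the bipartite regime and the argument deducing the absorber shift from $\mathrm{hcf}_\chi$ versus $\mathrm{hcf}_c$ will differ accordingly.
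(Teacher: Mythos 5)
Your route is genuinely different from the paper's. The paper first applies the Regularity Lemma to $G$, runs Theorem~\ref{orekomlos} on the \emph{reduced graph} $R$ to obtain an almost perfect $B'$-packing of $R$ (where $B'$ is a complete $r$-partite graph with one slightly-too-large small class), uses Lemma~\ref{exceptional} to mop up exceptional vertices, then after a divisibility correction invokes Lemma~\ref{useful} (the Kühn--Osthus lemma, where ${\rm hcf}(H)=1$ enters) and the Blow-up lemma to tile each cluster blow-up perfectly. You instead propose applying Theorem~\ref{orekomlos} to $G$ directly and closing the gap with an absorbing set. That would be a legitimate alternative architecture, but as written it has several concrete gaps.

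First, your definition of a $v$-absorber cannot work: if $|F_v|$ and $|F_v\cup\{v\}|=|F_v|+1$ are both to have perfect $H$-packings, then $|H|$ divides two consecutive integers, which is impossible for $|H|\ge 2$. The standard absorber for $H$-tilings absorbs a full $|H|$-set $W$ at a time, i.e.\ one requires both $G[F]$ and $G[F\cup W]$ to have perfect $H$-packings with $|W|=|H|$; the residue $W$ left by Theorem~\ref{orekomlos} is then broken into $|H|$-sets and absorbed one set at a time. This is fixable, but it is not what you wrote.

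Second, and more seriously, the entire construction of the absorbing set is deferred (``would be the technical heart of the proof, presumably carried out via the Regularity Lemma''). This is precisely the part that is not routine here, because the Ore-type hypothesis~(\ref{oreG}) does \emph{not} bound the minimum degree of $G$ from below by anything useful: a single vertex $v$ can have degree $o(n)$ provided all its non-neighbours compensate. Lemma~\ref{exceptional} is carefully engineered, via a regularity argument on the neighbourhood structure of $v$, to produce \emph{one} copy of $H$ through such a $v$; producing $\Omega(n^{|F|})$ absorbers rooted at $v$ -- which the probabilistic selection of $A$ requires -- is a much stronger statement and is not a corollary of Theorem~\ref{oreHx} or of the regularity lemma in any obvious way. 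Until that counting lemma is actually proved for low-degree vertices under an Ore condition, the argument does not go through.

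Third, your description of how ${\rm hcf}(H)=1$ enters is imprecise: the condition says that the greatest common divisor of all differences $x_{i+1}-x_i$ over all optimal colourings equals~$1$, not that there exist two optimal colourings whose class sizes differ by exactly one. In the paper the consequence of ${\rm hcf}(H)=1$ is packaged once and for all in Lemma~\ref{useful} (a perfect $H$-packing of a complete $r$-partite graph with suitably skewed class sizes), and extracting the analogous shifting flexibility for an absorber would need the same kind of combination of several colourings. So even the ``shift by one'' local repacking you rely on requires its own lemma, not a direct appeal to ${\rm hcf}(H)=1$.

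In short: the absorbing strategy is a plausible alternative and would bypass the blow-up machinery, but the two load-bearing steps -- a correctly formulated absorber and a proof that every vertex, including low-degree ones, lies in many of them under the Ore hypothesis -- are missing, and the divisibility claim for the single-vertex absorber as stated is false.
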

Note that Lemma~\ref{generalH} implies the upper bound on $\delta (H,n)$ by Alon and
Yuster
(which we mentioned in Section~\ref{sec1}). We now deduce Lemma~\ref{generalH} from
Lemma~\ref{hcf}.

{\removelastskip\penalty55\medskip\noindent{\bf Proof of Lemma~\ref{generalH}.}}
Let $h:=|H|$ and $r:=\chi(H)$. Given any $k\ge 2$, define $H^*$ to be the complete
$(r+1)$-partite graph
with one vertex class of size~$1$, one vertex class of size $hk-1$ and 
$r-1$ vertex classes of size $hk$. Let~$H'$ be obtained from $H^*$ by removing an edge
between some vertex~$y$ in a vertex class of size~$hk$ 
and the vertex in the singleton vertex class. So $\chi(H')=r+1$, $|H'|= hkr$ and
$\chi (H' [N(y)])=r-1$. Moreover, $CE (H')=0$ since~$N(y)$ lies in $r-1$ vertex
classes of~$H'$.
It is easy to see that~$H'$ contains a perfect $H$-packing and that ${\rm
hcf}(H')=1$. So
$\chi_{\rm Ore}(H')=\chi _{cr} (H') =(\chi (H')-1)\frac{|H'|}{|H'|-\sigma (H')}=  r
\frac{|H'| }{|H'|-1}$. 
In particular, we can choose~$k$ sufficiently large to guarantee that $1/\chi _{cr}
(H') \geq 1/\chi (H)- \eta/4$.

Consider any graph $G$ as in Lemma~\ref{generalH}.
Choose $a \leq kr$ such that $n-ah$ is divisible by $|H'|=hkr$.
Apply Proposition~\ref{average} to obtain $a$ disjoint copies of $H$ in $G$.
Remove these $a$ copies of $H$ from $G$ to obtain a graph $G'$ whose order is
divisible by $|H'|$ and which satisfies
$$ d_{G'} (x_1)+d_{G'}(x_2) \geq 2 \left( 1- \frac{1}{\chi (H)} +\frac{ \eta}{2}
\right)|G'|
\geq 2 \left( 1- \frac{1}{\chi _{cr} (H')} +\frac{\eta}{4} \right)|G'|$$
for all non-adjacent $x_1 \not =x_2 \in V(G')$.
Apply Lemma~\ref{hcf} to find a perfect $H'$-packing in~$G'$. In 
particular, this induces a perfect $H$-packing in~$G'$. Thus, together with all
those copies of~$H$ in $G-G'$
we have chosen before, we obtain a perfect $H$-packing in~$G$. 
\endproof

Thus to prove Theorem~\ref{main} it remains to prove Lemma~\ref{hcf},
which we will do in Section~\ref{sec6}.
In order to deal with the `exceptional' vertices in the proof of Lemma~\ref{hcf}
we use the following result which
implies that every vertex~$w$ of a graph~$G$ as in Lemma~\ref{hcf} is contained in a
copy of~$H$. 
We prove Lemma~\ref{exceptional} in Section~\ref{sec5}.

\begin{lemma}\label{exceptional}
Let $H$ be a graph such that $ m:=CE (H)< \infty$. Let $r:=\chi (H)$ and 
$\eta >0$. There exists an integer $n_0=n_0 (\eta , H)$ such that whenever~$G$ is
a graph on $n \ge n_0$ vertices with
\begin{align} \label{orecon}
d(x)+d(y) \ge 2\left(1- \frac{1}{r-\frac{2}{m+2}} + \eta \right) n
\end{align}
for all non-adjacent $x \not = y \in V(G)$
then every vertex of $G$ lies in a copy of~$H$ in~$G$.
\end{lemma}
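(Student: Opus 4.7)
The plan is to embed a copy of $H$ in $G$ in which $w$ plays the role of some vertex $x_0 \in V(H)$ that realises $CE(H)=m$. I first fix such an $x_0$ together with an $(r-2)$-colouring $c_0$ of $H[N_H(x_0)]$ which extends to an $(r+m)$-colouring $\phi$ of $H$; write $W_1,\dots,W_{r+m}$ for its colour classes, with $W_1,\dots,W_{r-2}$ those covering $N_H(x_0)$ and $x_0 \in W_{r-1}$. The aim is to produce disjoint sets $V_1^*,\dots,V_{r+m}^* \subseteq V(G)$ into which the $W_i$ embed to realise $H$, with $x_0 \mapsto w$ and $V_i^* \subseteq N_G(w)$ for $i \le r-2$.

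I would apply Szemer\'edi's Regularity Lemma to $G$ with parameters $\varepsilon \ll d \ll \eta, 1/|H|$, placing $w$ in the exceptional class, and form the reduced graph $R$ on the $k$ resulting clusters. A standard cleaning argument shows that the Ore-type condition on $G$ transfers to one on $R$: most non-adjacent pairs of clusters $V_i, V_j$ satisfy $d_R(V_i)+d_R(V_j)\ge 2(1 - 1/\chi_{\rm Ore}'(H) + \eta/2)k$. Moreover, after discarding a bounded number of vertices per cluster, every remaining cluster is either \emph{$w$-full} (essentially contained in $N_G(w)$) or \emph{$w$-empty} (essentially disjoint from $N_G(w)$).

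The heart of the argument is to locate inside $R$ a copy of the ``colour-class graph'' of $\phi$ restricted to $V(H)\setminus\{x_0\}$: that is, $r+m-1$ clusters joined in $R$ according to the $H$-adjacencies between the $W_i$, with the $r-2$ clusters playing the role of $W_1,\dots,W_{r-2}$ all $w$-full. Given such a substructure, a standard application of the Blow-Up Lemma (or direct counting via regularity) produces the desired copy of $H$ with $x_0 \mapsto w$.

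I expect the main obstacle to be producing this substructure in $R$, which I would handle by a case split on $d_G(w)$. When $d_G(w) \ge (1-1/(r-1)+\eta/4)n$, there are already enough $w$-full clusters to support a $K_{r-1}$ (via Erd\H{o}s--Stone applied inside $G[N_G(w)]$), yielding a $K_r$ through $w$ which one extends to $H$ using the abundance of structure in the remaining clusters. When $d_G(w)$ is smaller, the Ore condition forces every non-neighbour of $w$ to have large degree, so the $w$-empty clusters have large minimum degree in $R$; here one must use the precise strength of the bound $\chi_{\rm Ore}'(H)=r-2/(m+2)$ to guarantee that enough $w$-full clusters exist and connect correctly to the $w$-empty part. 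That this threshold is sharp is reflected in Proposition~\ref{prop2}, whose extremal example has chromatic number $r+m-1$, one short of the $r+m$ colours needed to embed $H$ with $w$ in the role of $x_0$.
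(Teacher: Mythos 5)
Your overall strategy coincides with the paper's: apply the Regularity Lemma with $w$ in the exceptional set, locate in the reduced graph $R$ either a $K_r$ with $r-1$ clusters ``adjacent to $w$'' or a $K_{r+m}$ with $r-2$ such clusters, and then embed a copy of $H$ via the regular pairs. The gap is in the case split, which is exactly where the strength of the hypothesis $\chi'_{\rm Ore}(H)=r-\frac{2}{m+2}$ is consumed. Writing $k:=(m+2)r-2$ and $s:=d_G(w)/n$, you split at $s\approx 1-\frac{1}{r-1}$, but the correct threshold is $s\approx 1-\frac{2m+2}{k}$, which is strictly smaller whenever $m\geq 1$. In the uncovered regime $1-\frac{2m+2}{k}\lesssim s< 1-\frac{1}{r-1}$ your Case~2 argument fails: a non-neighbour $y$ of $w$ satisfies $d_G(y)\gtrsim\bigl(2(1-\frac{m+2}{k})-s\bigr)n$, and this exceeds $\bigl(1-\frac{2}{k}\bigr)n$ exactly when $s\leq 1-\frac{2m+2}{k}$. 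The bound $\bigl(1-\frac{2}{k}\bigr)n$ is precisely what is needed so that, starting from a $K_{r-2}$ in clusters adjacent to $w$ (whose common $R$-neighbourhood has size at least $\bigl(\frac{2m+2}{k}+\frac{\eta}{2}\bigr)|R|$), one can greedily append $m+2$ further clusters to reach $K_{r+m}$; above the paper's threshold the greedy extension stalls before $K_{r+m}$ is reached. Conversely, $1-s\leq\frac{2m+2}{k}$ is also exactly the hypothesis that lets the complementary $K_r$ argument close: for $i\leq r-2$ chosen clusters adjacent to $w$, the common $R$-neighbourhood has size at least $\bigl(1-\frac{i(m+2)}{k}+\frac{\eta}{2}\bigr)|R|\geq(1-s+\frac{\eta}{2})|R|$, which forces it to meet the clusters adjacent to $w$, so the $K_{r-1}$ (and then $K_r$) can be built inside $R$. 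So the two halves fit together only at the paper's threshold.

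Three smaller points. Invoking Erd\H{o}s--Stone inside $G[N_G(w)]$ would produce a $K_{r-1}$ in $G$, not a clique of clusters in $R$ consisting of regular pairs, so you cannot directly apply the embedding lemma afterwards; the paper instead builds the clique greedily inside $R$ as sketched above. Your claim that after deleting a bounded set each cluster becomes ``$w$-full'' or ``$w$-empty'' is not literally achievable ($w$ may have roughly half of a cluster in its neighbourhood); the standard move is to call a cluster adjacent to $w$ if $w$ has at least $\alpha L$ neighbours in it and to restrict to those neighbours when embedding. Finally, the case $r=2$ (forcing $H$ to have an isolated vertex since $CE(H)<\infty$) needs a short separate argument.
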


The above results also imply Theorem~\ref{oreHx}:

{\removelastskip\penalty55\medskip\noindent{\bf Proof of Theorem~\ref{oreHx}.}}
The lower bound in the case when $CE (H)=\infty$ follows from Proposition~\ref{prop1}.
If $CE (H)<\infty$ and $\chi(H)\ge 3$ then Proposition~\ref{prop2} gives the lower
bound
for infinitely many values of~$n$ and as in the proof of the lower bound in
Theorem~\ref{main}
it can be used to derive the lower bound for any~$n$. If $CE (H)<\infty$ and
$\chi(H)=2$
then $CE (H)=0$ and so the lower bound is trivial.
The upper bound follows%
      \COMMENT{Lemma~\ref{generalH} can also be used to deal with the case when~$n$ is
not divisible by $|H|$}
from Lemmas~\ref{generalH} and~\ref{exceptional}.
\endproof

In our proof of Lemma~\ref{hcf} we will also use the following result, Lemma~12 from~\cite{kuhn2}.
It gives a sufficient condition on the sizes of the vertex classes of a
complete $\chi (H)$-partite graph~$G$ which ensures that~$G$ has a perfect $H$-packing.
Lemma~\ref{useful} is the point where the assumption that~${\rm hcf} (H)=1$ is
crucial --
it is false for graphs with ${\rm hcf} (H) \neq 1$.

\begin{lemma}\label{useful}
Let $H$ be a graph with ${\rm hcf} (H)=1$. Put $r:= \chi (H)$ and
$\gamma :=(r-1) \sigma (H)/(|H|- \sigma (H))$. Let
$0< \beta _1 \ll \lambda _1 \ll \gamma, 1- \gamma , 1/|H|$ be positive constants.
Suppose that $G$ is a complete $r$-partite graph with vertex classes $U_1, \dots, U_r$
such that $|G| \gg|H|$ is divisible by $|H|$,
$(1- \lambda _1 ^{1/10})|U_r| \leq \gamma |U_i|\leq (1- \lambda _1)|U_r|$ for all
$i<r$ and such that $|\,|U_i|-|U_j| \,|\leq \beta _1 |G|$
whenever $1 \le i<j<r$. Then $G$ contains a perfect $H$-packing.
\end{lemma}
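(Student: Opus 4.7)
The plan is to translate the question into an integer-programming problem over \emph{template types} of copies of $H$ in a complete $r$-partite graph, to solve its rational relaxation using the exact form of $\gamma$, and then to invoke $\mathrm{hcf}(H)=1$ to round the rational solution to a non-negative integer one. Concretely, for each optimal $r$-colouring $c$ of $H$ with class sizes $x^c_1\le \dots\le x^c_r$ and each bijection $\pi\colon [r]\to[r]$, associate the template vector $v^{c,\pi}\in\mathbb{Z}_{\ge 0}^r$ with $v^{c,\pi}_i = x^c_{\pi^{-1}(i)}$, encoding how many vertices the corresponding copy of $H$ places in each $U_i$. Since $G$ is complete $r$-partite, any choice of template counts $(n_{c,\pi})$ can be realised greedily as a vertex-disjoint collection of copies of $H$, provided that $\sum_{c,\pi} n_{c,\pi}\, v^{c,\pi} = (|U_1|,\dots,|U_r|)$. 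So the problem reduces to producing non-negative integers $n_{c,\pi}$ satisfying this vector equation.

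First I would exhibit a rational solution. Fix an optimal colouring $c^\star$ having a colour class of size $\sigma(H)$, and consider the base scheme which always places this smallest class into $U_r$ and distributes the remaining $r-1$ classes evenly over the bijections to $U_1,\dots,U_{r-1}$. A direct count shows that this scheme works exactly when $|U_r|=\gamma|U_i|$ and $|U_1|=\dots=|U_{r-1}|$. The hypothesis tells us $|U_r|$ is slightly larger than $\gamma|U_i|$ (by a factor $1+\Theta(\lambda_1)$), while $|U_1|,\dots,|U_{r-1}|$ differ by at most $\beta_1|G|\ll\lambda_1|G|$. The excess in $|U_r|$ is absorbed by replacing a small proportion of base copies with templates that place a \emph{larger} colour class of $H$ into $U_r$, and the small imbalances among $U_1,\dots,U_{r-1}$ are handled by symmetrising across permutations. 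Since $\beta_1\ll\lambda_1\ll 1$, all coefficients of the resulting rational solution stay non-negative.

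The main obstacle is promoting this rational solution to a non-negative integer one, which is where $\mathrm{hcf}(H)=1$ is essential. The differences $v^{c,\pi}-v^{c',\pi'}$ span a sublattice $\Lambda$ of $\{z\in\mathbb{Z}^r:\sum_i z_i=0\}$; I would verify that $\mathrm{hcf}(H)=1$ forces $\Lambda$ to equal this whole sublattice. In the non-bipartite case this uses that the consecutive gaps $x^c_{i+1}-x^c_i$ across all optimal colourings of $H$ have overall gcd $1$, so that combining colourings and permutations produces every lattice direction summing to $0$. In the bipartite case, $\mathrm{hcf}_\chi(H)\le 2$ together with $\mathrm{hcf}_c(H)=1$ plays the same role, with component unions supplying the remaining parity direction. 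Since $|H|$ divides $|G|$ the total-vertex constraint is automatically compatible with $\Lambda$, and the $\Theta(\lambda_1|G|)$ slack in the rational solution dominates the $O(|H|)$ rounding error, yielding the desired non-negative integer solution and hence a perfect $H$-packing.
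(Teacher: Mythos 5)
The paper does not prove this lemma; it quotes it as Lemma~12 of~\cite{kuhn2}. Your framework---encode copies of $H$ in a complete $r$-partite host by template vectors $v^{c,\pi}$, observe that any non-negative integer solution of $\sum_{c,\pi} n_{c,\pi}v^{c,\pi}=(|U_1|,\dots,|U_r|)$ can be realised greedily, solve the rational relaxation using the explicit form of $\gamma$, and round via ${\rm hcf}(H)=1$---is sound and very much in the spirit of that proof. But two steps you pass over quickly are precisely where the content lies.

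First, the claim that the difference lattice $\Lambda$ is all of $\{z\in\mathbb{Z}^r:\sum_i z_i=0\}$ is not a one-liner. For $r\ge 3$ it does follow from ${\rm hcf}_{\chi}(H)=1$: transposing two colour classes of a fixed optimal colouring $c$ produces $(x^c_a-x^c_b)(e_i-e_j)$, so $c$ alone contributes $\gcd(\mathcal{D}(c))$ times the full lattice, and combining over all $c$ finishes. For $r=2$ with ${\rm hcf}_{\chi}(H)=2$, however, these differences only generate $2\mathbb{Z}(e_1-e_2)$; you need to exhibit an odd generator, and the correct source is that ${\rm hcf}_c(H)=1$ forces a component of odd order, whose (unique, necessarily unbalanced) bipartition $(a_1,b_1)$ has $b_1-a_1$ odd, so that flipping only that component's $2$-colouring yields two templates differing by $(b_1-a_1)(e_1-e_2)$. ``Component unions supplying the remaining parity direction'' gestures at this but does not establish it. Second, your rational solution---base scheme from a single $c^\star$, permutation-symmetrised, lightly adjusted---may be supported only on templates spanning a proper sublattice of $\Lambda$ (for instance if $\gcd(\mathcal{D}(c^\star))>1$), so before rounding you must blend a $\Theta(\lambda_1)$-fraction of a fixed lattice-generating set of templates into it. The rounding also needs more than ``slack beats $O(|H|)$'': take an integer preimage $n^0$ of $(|U_1|,\dots,|U_r|)$ under $n\mapsto\sum_{c,\pi}n_{c,\pi}v^{c,\pi}$ (it exists since, as $|H|$ divides $|G|$, the target lies in the group generated) and approximate $\tilde n-n^0$ by an integer kernel vector, with the error governed by a lattice basis of that kernel---a constant depending on $H$, but not simply $|H|$.
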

Here (and later on) we write $0<a_1 \ll a_2 \ll a_3 \leq 1$ to mean that we can
choose the constants
$a_1,a_2,a_3$ from right to left. More
precisely, there are increasing functions $f$ and $g$ such that, given
$a_3$, whenever we choose some $a_2 \leq f(a_3)$ and $a_1 \leq g(a_2)$, all
calculations needed in the proof of Lemma~\ref{useful} are valid.


\section{The Regularity lemma and the Blow-up lemma}\label{sec4}
In the proof of Lemma~\ref{hcf} we will use Szemer\'edi's Regularity
lemma~\cite{reglem}
and the Blow-up lemma of Koml\'os, S\'ark\"ozy and 
Szemer\'edi~\cite{kss2}. 
In this section we will introduce all the information we require about these two
results.
To do this, we firstly introduce some more notation.
The \emph{density} of a bipartite graph $G$ with vertex classes~$A$ and~$B$ is
defined to be
$$d(A,B):=\frac{e(A,B)}{|A||B|}.$$
Given any $\eps>0$, we say that $G$ is \emph{$\eps$-regular} if for all sets
$X \subseteq A$ and $Y \subseteq B$ with $|X|\ge \eps |A|$ and
$|Y|\ge \eps |B|$ we have $|d(A,B)-d(X,Y)|< \eps$. 
In this case we also say that $(A,B)$ is an \emph{$\eps$-regular pair}.
Given $d \in [0,1)$ we say that $G$ is \emph{$(\eps,d)$-super-regular} if all sets
$X \subseteq A$ and $Y \subseteq B$ with $|X|\ge \eps |A|$ and $|Y|\ge \eps |B|$
satisfy $d(X,Y)>d$ and, furthermore, if $d_G (a)>d|B|$ for
all $a \in A$ and $d_G (b)>d|A|$ for all $b \in B$.

We will use the following degree form of Szemer\'edi's Regularity lemma which can be
easily derived from the classical version.
\begin{lemma}[Regularity lemma]\label{rl}
For every $\eps >0$ and each integer~$\ell_0$ there is an $M=M(\eps, \ell_0 )$ such
that if~$G$
is any graph on at least $M$ vertices and $d \in [0,1)$, then there exists a
partition of $V(G)$ 
into $\ell+1$ classes $V_0 , V_1 ,..., V_\ell$, and a spanning subgraph
$G' \subseteq G$ with the following properties:
\begin{itemize}
\item $\ell_0 \leq \ell\leq M,$ $|V_{0}| \leq \eps |G|$, $|V_1|=\dots =|V_\ell|=:L$,
\item $d_{G'} (v) > d_{G} (v) - (d+ \eps)|G|$ for all $v \in V(G)$,
\item $e(G'[V_{i}])=0$ for all $i \geq 1$,
\item for all $1 \leq i<j \leq \ell$ the graph $(V_i,V_j)_{G'}$ is $\eps$-regular
and has density
either $0$ or greater than $d$. 
\end{itemize}
\end{lemma}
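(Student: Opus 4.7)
The plan is to derive Lemma~\ref{rl} from the classical version of Szemer\'edi's Regularity Lemma by applying it with a smaller regularity parameter and then constructing $G'$ by deleting a carefully chosen set of edges. First I would apply the classical lemma to~$G$ with parameter $\eps':=\eps^3$ and cluster-number lower bound $\ell'_0:=\max\{\ell_0,8/\eps\}$, obtaining an equitable partition $W_0,W_1,\dots,W_\ell$ with $|W_0|\le\eps'|G|$, common cluster size $|W_1|=\dots=|W_\ell|=L$, and all but at most $\eps'\binom{\ell}{2}$ of the pairs $(W_i,W_j)$ being $\eps'$-regular.

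Second, I would handle clusters incident to too many irregular pairs. Call $W_i$ \emph{bad} if it belongs to at least $\sqrt{\eps'}\,\ell$ irregular pairs; a standard double-count shows the number of bad clusters is at most $\sqrt{\eps'}\,\ell$. Set $V_0:=W_0\cup\bigcup_{W_i\text{ bad}}W_i$ and relabel the remaining clusters as $V_1,\dots,V_{\ell''}$, so that $|V_0|\le(\eps'+\sqrt{\eps'})|G|\le\eps|G|/2$ and each surviving $V_i$ lies in fewer than $\sqrt{\eps'}\,\ell$ irregular pairs of the original partition.

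Third, I would build $G'\subseteq G$ by deleting every edge that either (a)~is incident to $V_0$, (b)~lies inside some $V_i$, (c)~belongs to an irregular pair $(V_i,V_j)$, or (d)~belongs to a pair $(V_i,V_j)$ whose density in $G$ is at most~$d$. Because $\eps'\le\eps$, every surviving pair $(V_i,V_j)_{G'}$ is either empty or $\eps$-regular with density greater than~$d$; together with the preceding step this delivers the first, third and fourth bulleted conditions of the lemma for free.

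The final step---and the only point requiring genuine care---is the per-vertex degree bound $d_{G'}(v)>d_G(v)-(d+\eps)|G|$. For each $v$ the number of edges of $G$ deleted at~$v$ is at most $|V_0|+L+\sqrt{\eps'}\,\ell\cdot L+d\cdot\ell\cdot L\le\eps|G|/2+\eps|G|/8+\sqrt{\eps'}\,|G|+d|G|<(d+\eps)|G|$, using $L\le|G|/\ell'_0\le\eps|G|/8$ and $\sqrt{\eps'}\le\eps/8$. The main (and rather minor) obstacle is just this constant-chase to squeeze all four contributions below $(d+\eps)|G|$, which is resolved by the initial choice $\eps'\ll\eps^2$.
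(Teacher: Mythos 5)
The paper gives no proof of Lemma~\ref{rl}; it simply asserts it is ``easily derived from the classical version''. Your derivation is the right general shape (apply the classical lemma with a much smaller parameter, discard the irregular/sparse/internal edges, absorb problematic clusters into $V_0$), and steps one through three are fine, but the final constant-chase contains a genuine error that your own last sentence mislabels as ``rather minor''.

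The problem is the term $d\cdot\ell\cdot L$ that you use to bound the edges lost at a vertex $v\in V_i$ to the sparse regular pairs. If $(V_i,V_j)$ is $\eps'$-regular with density $\delta_{ij}\le d$, what regularity gives you is that \emph{all but at most $\eps' L$} vertices of $V_i$ have fewer than $(\delta_{ij}+\eps')L$ neighbours in $V_j$; it does \emph{not} give you that every vertex does. A single $v$ can have all $L$ of its edges to $V_j$, and since the pair is sparse you must delete all of them (the surviving pair is required to have density $0$ or $>d$). Worse, the same $v$ can be atypical for many different $j$ simultaneously, so the true bound on edges lost by $v$ to sparse pairs is $\ell\cdot L\approx n$, not $d\cdot\ell\cdot L$. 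So the degree condition $d_{G'}(v)>d_G(v)-(d+\eps)|G|$ is simply false for such $v$ under your construction. The standard fix is an extra Markov step: for each sparse pair $(V_i,V_j)$ let $X_{ij}\subseteq V_i$ be the at most $\eps'L$ vertices with more than $(d+\eps')L$ neighbours in $V_j$. Counting incidences, $\sum_{v\in V_i}|\{j: v\in X_{ij}\}|\le\eps' L\ell$, so at most $\sqrt{\eps'}L$ vertices of $V_i$ lie in $X_{ij}$ for more than $\sqrt{\eps'}\ell$ values of $j$. Move $\lceil\sqrt{\eps'}L\rceil$ vertices from every cluster (including those high-count ones) to $V_0$; the cluster sizes stay equal, $V_0$ grows by only $O(\sqrt{\eps'}\,n)$, the pairs remain regular by the slicing lemma, and every surviving $v$ now loses at most $\sqrt{\eps'}\ell\cdot L+\ell\cdot(d+\eps')L\le(d+O(\sqrt{\eps'}))n$ edges to sparse pairs, which is what you need. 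Without this step the claimed per-vertex bound has no justification.

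A second, smaller issue: you delete edges incident to $V_0$. For $v\in V_0$ this forces $d_{G'}(v)=0$, and the required inequality $0>d_G(v)-(d+\eps)|G|$ can fail, since $d_G(v)$ may well exceed $(d+\eps)|G|$. The lemma imposes no regularity or independence condition on $V_0$, so the correct move is to \emph{keep} all edges incident to $V_0$; then $d_{G'}(v)=d_G(v)$ for $v\in V_0$ and the bound is trivial there, while for $v\notin V_0$ the kept $V_0$-edges only help. (Likewise, to be safe that the final cluster count still exceeds $\ell_0$ after you discard bad clusters, take $\ell_0':=\max\{2\ell_0,8/\eps\}$.)
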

The sets $V_1,\dots,V_\ell$ are called \emph{clusters}, $V_0$ is called the
\emph{exceptional set}
and the vertices in $V_0$ \emph{exceptional vertices}. We refer to $G'$ as the
\emph{pure graph of $G$}.
Clearly, we may assume that $(V_i,V_j)_{G}$ is not $\eps$-regular or has density at
most~$d$
whenever $(V_i,V_j)_{G'}$ contains no edges (for all $1\le i<j\le \ell$).
The {\emph{reduced graph $R$ of $G$} is the graph whose
vertices are $V_1, \dots ,V_\ell$ and in which $V_i$ is adjacent to $V_j$ whenever
$(V_i, V_j)_{G'}$ is $\eps$-regular and has density greater than~$d$.

A well-known fact is that the minimum degree of a graph~$G$ is almost inherited by its
reduced graph. We now prove an analogue of this for an Ore-type degree condition.
\begin{lemma}\label{inherit}
Given a constant $c$, let $G$ be a graph such that $d_{G}(x)+d_{G}(y) \geq c |G|$
for all non-adjacent $x \not =y \in V(G)$. Suppose we have applied Lemma~\ref{rl} with
parameters~$\eps$ and~$d$ to~$G$. Let~$R$ be the corresponding reduced graph.
Then $d_{R}(V_i )+d_{R} (V_j ) >(c -2d-4\eps)|R|$ for all non-adjacent $V_i \not =
V_j \in V(R)$. 
\end{lemma}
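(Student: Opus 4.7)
The plan is to carry over the standard argument showing that the minimum degree of~$G$ is almost inherited by the reduced graph, but applied to a \emph{pair} of non-adjacent vertices (one in $V_i$ and one in $V_j$) rather than to a single vertex.

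Fix non-adjacent clusters $V_i \ne V_j$ in~$R$; the first step is to produce vertices $v \in V_i$ and $v' \in V_j$ with $vv' \notin E(G)$, so that the Ore-type hypothesis $d_G(v)+d_G(v') \ge c|G|$ becomes usable. Non-adjacency of $V_i$ and $V_j$ in $R$ means that $(V_i,V_j)_{G'}$ has no edges, so by the convention stated just after Lemma~\ref{rl} we may assume that $(V_i,V_j)_G$ is either not $\eps$-regular or has density at most~$d$. In either case $(V_i,V_j)_G$ has density strictly less than~$1$ (any pair of density~$1$ is trivially $\eps$-regular, and $d<1$), so the desired non-edge $vv'$ exists.

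The second step bounds $d_G(u)$ for an arbitrary $u \in V_i$ in terms of $d_R(V_i)$. Let $L:=|V_1|=\cdots=|V_\ell|$. The $G'$-neighbours of $u$ lie either in $V_0$ (contributing at most $|V_0|\le \eps|G|$) or in a cluster $V_k$ adjacent to $V_i$ in~$R$ (contributing at most $L$ each, for a total of at most $d_R(V_i)\cdot L$); there are no $G'$-neighbours of $u$ in $V_i$ itself, nor in any $V_k$ non-adjacent to $V_i$ in~$R$, since the corresponding pairs of $G'$ have no edges. Combining this with the inequality $d_G(u)\le d_{G'}(u)+(d+\eps)|G|$ from Lemma~\ref{rl} gives
$$d_G(u)\ \le\ d_R(V_i)\cdot L + (d+2\eps)|G|,$$
and the analogous bound holds for $u\in V_j$.

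Applying this estimate to $v$ and $v'$ and invoking the Ore-type hypothesis yields
$$(d_R(V_i)+d_R(V_j))\,L\ \ge\ d_G(v)+d_G(v')-2(d+2\eps)|G|\ \ge\ (c-2d-4\eps)|G|.$$
Dividing by $L$ and using $|G|\ge \ell L=|R|\cdot L$ gives the required inequality; the strict inequality in the conclusion is obtained from the slack in $|V_0|\le \eps|G|$ (so that $|G|/L$ is strictly larger than $|R|$ whenever $V_0$ is non-empty, and the small loss can be absorbed). The only non-routine point in the whole argument is the production of the non-edge $vv'$; this is the one place where the structure of the pure graph (and not just of $R$) has to be used, and it explains why we cannot simply average the Ore condition over pairs of clusters.
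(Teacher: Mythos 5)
Your argument is essentially the paper's: the paper likewise picks a non-edge $xy$ with $x \in V_i$, $y \in V_j$ (existence by the stated convention on $G'$, exactly as you argue), bounds the $G''$-neighbours of $x$ and $y$ (where $G'' := G' - V_0$) by $d_R(V_i)L$ and $d_R(V_j)L$ respectively, and divides by $L$. The one small slip is that you quote the Regularity-lemma bound as $d_G(u) \le d_{G'}(u)+(d+\eps)|G|$, whereas Lemma~\ref{rl} actually gives the strict inequality $d_{G'}(u) > d_G(u)-(d+\eps)|G|$; that strictness is what delivers the strict inequality in the conclusion unconditionally, while your proposed alternative source of strictness (the slack in $|V_0| \le \eps|G|$) does not cover the case $V_0 = \emptyset$, which the Regularity lemma permits.
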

\proof
Let $V_1 ,\dots ,V_\ell$ denote the clusters obtained from Lemma~\ref{rl}. Let
$L:=|V_1|=\dots=|V_\ell|$,
let~$V_0$ denote the exceptional set and let~$G'$ be the pure graph.
Set $G'':=G'-V_0$. Consider any pair $V_iV_j$ of clusters which does not form an
edge in~$R$. Pick $x \in V_i$ and $y \in V_j$ such that%
      \COMMENT{exist by our convention about~$G'$}
$xy \not \in E(G)$. So $d_G (x)+d_G (y) \geq c|G|$ and thus $d_{G''} (x) +d_{G''}(y)
>(c-2d-4 \eps)|G|$.
However, by definition of~$G''$, each cluster containing a neighbour of~$x$ in~$G''$
must be
a neighbour of~$V_i$ in~$R$ and the analogue holds for the clusters containing the
neighbours of~$y$.
Thus $d_R (V_i)+d_R (V_j) \ge (d_{G''} (x) +d_{G''}(y))/L\ge (c- 2d -4 \eps)|R|$,
as required.
\endproof

We will also use the following Embedding lemma. The proof is based on a simple greedy
argument, see e.g.~Lemma~7.5.2 in~\cite{dies} or
Theorem~2.1 in~\cite{k} for details.

\begin{lemma}[Embedding lemma]\label{emblemma}
Let $H$ be an $r$-partite graph with vertex classes $X_1,\dots,X_r$ and let
$\eps,d,n_0$ be constants such that
$0<1/n_0\ll\eps\ll d,1/|H|$. Let~$G$ be an $r$-partite graph with vertex classes
$V_1,\dots,V_r$
of size at least~$n_0$ such that $(V_i,V_j)_{G}$ is $\eps$-regular and has density
at least~$d$
whenever~$H$ contains an edge between~$X_i$ and $X_j$ (for all $1\le i<j\le r$).
Then~$G$ contains a copy of~$H$ such that $X_i\subseteq V_i$.
\end{lemma}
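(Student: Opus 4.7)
My plan is to prove Lemma~\ref{emblemma} by a standard greedy embedding argument, processing the vertices of $H$ in some fixed order and maintaining, for each unembedded vertex, a sufficiently large candidate set in the appropriate cluster of $G$.

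Fix an ordering $v_1,\dots,v_h$ of $V(H)$, where $h:=|H|$, and let $c:V(H)\to\{1,\dots,r\}$ record the vertex class, so $v_s\in X_{c(v_s)}$. For each $v\in V(H)$ I will maintain a \emph{candidate set} $C_v\subseteq V_{c(v)}$, initialised to $C_v:=V_{c(v)}$. The embedding $\varphi$ is built one vertex at a time: when processing $v_s$, choose $\varphi(v_s)\in C_{v_s}\setminus\{\varphi(v_1),\dots,\varphi(v_{s-1})\}$, and then for every as-yet-unembedded neighbour $u$ of $v_s$ in $H$ update $C_u\leftarrow C_u\cap N_G(\varphi(v_s))$. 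The invariant I will carry along is that immediately before processing $v_s$, every unembedded vertex $u$ satisfies $|C_u|\ge (d-\eps)^{\Delta(H)}|V_{c(u)}|$, which is much larger than $\eps|V_{c(u)}|$ by the hierarchy $1/n_0\ll\eps\ll d,1/|H|$.

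The selection of $\varphi(v_s)$ uses the $\eps$-regularity of the relevant pairs. For each unembedded neighbour $u$ of $v_s$, the pair $(V_{c(v_s)},V_{c(u)})_G$ is $\eps$-regular of density at least $d$; since $|C_u|\ge \eps|V_{c(u)}|$ by the invariant, all but at most $\eps|V_{c(v_s)}|$ vertices of $V_{c(v_s)}$ have at least $(d-\eps)|C_u|$ neighbours in $C_u$. Vertices failing this for some neighbour of $v_s$ are declared \emph{bad}; there are at most $\Delta(H)\cdot \eps|V_{c(v_s)}|$ of them. Thus the set of admissible images is at least
\[
|C_{v_s}|-\Delta(H)\eps|V_{c(v_s)}|-h\;\ge\;\bigl((d-\eps)^{\Delta(H)}-\Delta(H)\eps\bigr)|V_{c(v_s)}|-h,
\]
which is positive because $\eps\ll d,1/|H|$ and $|V_{c(v_s)}|\ge n_0\gg 1/\eps,h$. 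Choose any admissible vertex as $\varphi(v_s)$; then for each unembedded neighbour $u$ of $v_s$, the updated $C_u$ has size at least $(d-\eps)|C_u^{\rm old}|\ge (d-\eps)^{\Delta(H)}|V_{c(u)}|$, restoring the invariant.

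Iterating through all $h$ vertices produces an injective homomorphism $\varphi:V(H)\to V(G)$ with $\varphi(X_i)\subseteq V_i$, i.e.\ the desired copy of $H$. The only place where anything could go wrong is in maintaining the lower bound on the candidate sets, and this is exactly what the quantitative version of $\eps$-regularity is designed to deliver; the constants were chosen precisely so the two error terms ($\Delta(H)\eps$ from regularity losses and $h/|V_{c(v_s)}|$ from forbidden previously used vertices) stay well below $(d-\eps)^{\Delta(H)}$. Since the argument is entirely standard, I would cite Lemma~7.5.2 of~\cite{dies} or Theorem~2.1 of~\cite{k} rather than writing out these routine estimates in full.
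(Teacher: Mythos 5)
Your proposal is correct and follows the same standard greedy embedding argument that the paper invokes by citing Lemma~7.5.2 in Diestel and Theorem~2.1 in Koml\'os--Simonovits rather than writing out a proof. The invariant $|C_u|\ge(d-\eps)^{\Delta(H)}|V_{c(u)}|$, the counting of bad vertices via $\eps$-regularity, and the hierarchy $1/n_0\ll\eps\ll d,1/|H|$ to absorb the error terms are all exactly the ingredients those references use.
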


The Blow-up lemma of Koml\'os, S\'ark\"ozy and Szemer\'edi~\cite{kss2} states
that one can even find a spanning subgraph~$H$ in~$G$ provided that~$H$ has bounded
maximum
degree and the bipartite pairs forming~$G$ are super-regular.

\begin{lemma}[Blow-up lemma]\label{blow}
Given a graph $R$ with $V(R)= \{1, \dots , r\}$ and $d, \Delta >0$, there is a constant
$\eps _0= \eps _0 (d, \Delta , r) >0$ such that the following holds.
Given $L_1 , \dots , L_r \in \mathbb N$ and $0 < \eps \leq \eps _0$, let $R^*$ be
the graph 
obtained from $R$ by replacing each vertex $i \in V(R)$ with a set $V_i$ of $L_i$
new vertices
and joining all vertices in $V_i$ to all vertices in 
$V_j$ precisely when $ij \in E(R)$. Let $G$ be a spanning subgraph of $R^*$ such that
for every $ij \in E(R)$ the bipartite graph $(V_i , V_j)_G$ is 
$(\eps , d)$-super-regular. Then $G$ contains a copy of every subgraph $H$ of $R^*$
with
$\Delta (H) \leq \Delta$.
\end{lemma}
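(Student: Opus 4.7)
The plan is to embed $H$ into $G$ vertex by vertex using a carefully controlled randomized greedy process, following the original strategy of Koml\'os, S\'ark\"ozy, and Szemer\'edi~\cite{kss2}. The main idea is that the super-regularity of the pairs $(V_i, V_j)_G$ ensures that candidate sets for unembedded vertices of $H$ remain large and essentially regular throughout the embedding, so the Embedding lemma can effectively be applied along the way.

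First I would fix an ordering $h_1, \dots, h_m$ of $V(H)$ (where $m=|H|$) and reserve a small buffer $B \subseteq V(H)$ of size roughly $\eps^{1/2} m$, to be embedded at the very end. For each vertex $h_j$ assigned to class $V_{\pi(j)}$, I maintain a candidate set $C_j \subseteq V_{\pi(j)}$ of potential images, initially $C_j = V_{\pi(j)}$. When $h_j$ is embedded to a chosen $v \in C_j$, I update the candidate set of each unembedded neighbour $h_k$ of $h_j$ by replacing $C_k$ with $C_k \cap N_G(v)$. To pick $v$ I would use a random-greedy rule: choose $v$ uniformly from $C_j$ subject to avoiding a small list of vertices declared ``bad'' for neighbours of $h_j$.

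The key technical point is that at each step the candidate sets shrink at a controlled rate: because $(V_{\pi(j)}, V_{\pi(k)})_G$ is $(\eps,d)$-super-regular, a typical $v \in C_j$ reduces $|C_k|$ by roughly a factor of $d$, and the pair $(C_k, C_\ell)$ for any two unembedded neighbours of $h_j$ inherits $\eps'$-regularity with only slightly worse parameters, by the standard ``regularity restricted to large subsets'' lemma. Since $\Delta(H) \le \Delta$, each candidate set is restricted at most $\Delta$ times, so $|C_k| = \Omega(d^\Delta L_{\pi(k)})$ throughout. A vertex of $H$ is declared \emph{bad} if its candidate set drops below this threshold, or if it loses regularity with too many neighbours; one shows via concentration inequalities (Azuma/Chernoff on the random choices) that with high probability only $O(\eps m)$ vertices become bad, and that the bad ones are spread out in~$H$.

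The main obstacle, and what makes the original proof delicate, is finishing the embedding in the presence of the bad vertices together with the buffer $B$. I would absorb the bad vertices into $B$ and then embed the resulting set by constructing an auxiliary bipartite graph between these unembedded vertices of $H$ and the remaining unused vertices of $G$, verifying Hall's condition from super-regularity and the bound $\Delta(H) \le \Delta$ (using that each bad vertex still has many available images among the roughly $d^\Delta L_{\pi(k)}$-sized sets that survive). The hardest step is the concentration analysis tracking the sizes and the regularity of all candidate sets simultaneously: the random choices at different rounds are not independent, so care is needed in setting up the martingale estimates. Once that is in place, the Hall-type matching at the end is comparatively routine.
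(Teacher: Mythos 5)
This statement is the Blow-up lemma of Koml\'os, S\'ark\"ozy and Szemer\'edi, which the paper does not prove: it is quoted verbatim and cited from~\cite{kss2}, then applied as a black box in Section~\ref{sec6}. So there is no ``paper's own proof'' to compare against.

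Your sketch is a faithful high-level description of the original KSS proof strategy (randomized greedy embedding with candidate sets, a reserved buffer, tracking of set sizes and inherited regularity, and a final Hall-type matching to place the buffer and bad vertices). The structure you describe is correct. However, be aware that as written this is a sketch rather than a proof: the concentration argument showing that bad vertices are few and ``spread out'' in $H$, the precise bookkeeping that lets one verify Hall's condition uniformly over all subsets of the buffer, and the interaction between the buffer size, $\eps$, $d$ and $\Delta$ are exactly where the substantial difficulty of the Blow-up lemma lies, and filling them in is a lengthy undertaking. In the context of this paper you should simply cite~\cite{kss2} (or Koml\'os's survey~\cite{kom}) rather than attempt to reprove the lemma.

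Two small technical points worth flagging in your sketch: (i) you need the candidate sets of \emph{already embedded} vertices to be discarded, and the regularity-inheritance step must be applied only between candidate sets of pairs of still-unembedded $H$-neighbours, with $\eps'$ degrading multiplicatively in $d$ at each restriction, which is why the conclusion requires $\eps \le \eps_0(d,\Delta,r)$ rather than a threshold independent of~$d$; (ii) the super-regularity (as opposed to mere regularity) is essential precisely in the final Hall-condition step, since without the minimum-degree part of super-regularity one cannot guarantee that every individual buffer vertex still has a large set of available images after the main phase.
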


\section{Sketch proof of Theorem~\ref{orekomlos}}\label{sketch}
Let $G$ be a sufficiently large graph on $n$ vertices so that
$$d(x)+d(y) \geq 2 \left( 1 - \frac{1}{\chi _{cr} (H) } \right)n$$
for all non-adjacent $x \not = y\in V(G)$. Let $r:=\chi (H)$. Denote by $B$ the complete $r$-partite graph with 
one vertex class of size $(r-1)\sigma (H)$ and $r-1$ vertex classes of size $|H|-\sigma (H)$. Note that
$\chi _{cr} (B)=\chi _{cr} (H)$ and $B$ has a perfect $H$-packing. Thus by considering $B$ instead of~$H$
if necessary, it is sufficient to prove the theorem under the added assumption that
$H$ is a complete $r$-partite graph with one vertex class of size $\sigma \in \mathbb N$ and
$r-1$ vertex classes of size $\omega \in \mathbb N$.
It suffices to consider the case when $\sigma < \omega$. (It is easy to deduce the case $\sigma=\omega$
from this using the same trick as in the proof of Lemma~\ref{generalH}.) Let $H'$ denote the complete $r$-partite graph with one
vertex class of size $\sigma$ and $r-1$ vertex classes of size $\omega -1$. 

The proof of Theorem~\ref{orekomlos} involves repeated applications of the following claim to reduced graphs of $G$.
\begin{claim}\label{tilingclaim}
Let $0<\tau,1/\ell_0\ll d'\ll \gamma,1/|H|$. Let $R'$ be a graph on $\ell'\geq \ell_0$ vertices such
that $d(x)+d(y) \geq 2 ( 1 - {1}/{\chi _{cr} (H)}-d'  )\ell'$
for all non-adjacent $x \not = y\in V(R')$. Suppose that the maximum number of vertices in $R'$ covered
by an $H$-packing is $N \leq (1- \gamma)\ell'$. Then $R'$ contains a collection of vertex-disjoint copies of
$H,$ $ H'$ and $K_r$ which together cover at least $N+\tau \ell'$ vertices.
\end{claim}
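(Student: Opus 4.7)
I would follow Koml\'os's proof of the reduced-graph claim behind the minimum-degree Theorem~\ref{thmKomlos}, adapting the counting to the Ore-type hypothesis. Let $\mathcal{M}$ be a maximum $H$-packing of $R'$, of coverage $N$, and let $U:=V(R')\setminus V(\mathcal{M})$, so $|U|\ge\gamma\ell'$. Write $h:=|H|$ and $\rho:=1-1/\chi_{cr}(H)$, and note the identity $(1-\rho)h=\omega$. The key algebraic fact is that, since $H$ is complete $r$-partite with class sizes $\sigma<\omega,\dots,\omega$, each copy $H_0\in\mathcal{M}$ decomposes as the vertex-disjoint union of a copy of $H'$ and a transversal $T\cong K_{r-1}$ of its $r-1$ large colour classes. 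Therefore, whenever $u\in U$ is adjacent to all of such a $T\subseteq V(H_0)$, the local swap replacing $H_0$ by $H_0-T$ and $T\cup\{u\}$ yields an $\{H,H',K_r\}$-packing covering one more vertex than before. The target is either to perform $\tau\ell'$ vertex-disjoint swaps of this form, or to exhibit many vertex-disjoint copies of $K_r$ lying entirely inside $U$.

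Proposition~\ref{av2} gives average degree at least $(\rho-d')\ell'$ in $R'$. The additional observation, absent in Koml\'os's minimum-degree setting, is that the set $L:=\{v:d_{R'}(v)<(\rho-2d')\ell'\}$ forms a clique in $R'$, because any non-adjacent pair inside $L$ would violate the Ore hypothesis $d(x)+d(y)\ge 2(\rho-d')\ell'$. I split on the size of $U\cap L$: in the clique case $|U\cap L|\ge\gamma\ell'/2$, the set $U\cap L$ is itself a clique of linear size lying inside $U$, and hence contains at least $\gamma\ell'/(2r)\ge\tau\ell'/r$ vertex-disjoint copies of $K_r$, which are adjoined to $\mathcal{M}$ to finish.

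In the complementary high-degree case $|U\setminus L|\ge\gamma\ell'/2$, I set up a double-counting on pairs $(u,H_0)$ with $u\in U\setminus L$, $H_0\in\mathcal{M}$, and $u$ adjacent to a transversal of the $r-1$ large classes of $H_0$. Each $u\in U\setminus L$ has at most $(\omega/h+O(d'))\ell'$ non-neighbours in $R'$, so the number of $H_0\in\mathcal{M}$ for which $u$ misses an entire large colour class of $H_0$ is at most $\ell'/h+O(d'\ell')$. This count is marginal on its own; to close the gap I use that $\mathcal{M}$ cannot be excessively small, because the restriction of the Ore hypothesis to $R'[U]$ would otherwise, via Proposition~\ref{average}, force a further copy of $H$ inside $U$ and contradict maximality. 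With $N$ thus bounded below, the average number of swap-feasible copies per $u\in U\setminus L$ becomes $\Omega(\ell')$, and a greedy matching of $u$'s to distinct $H_0$'s produces $\tau\ell'$ disjoint swaps, using the hierarchy $\tau\ll d'\ll\gamma,1/|H|$.

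The main obstacle is precisely the counting in the high-degree case: the crude non-neighbour bound per $u$ only barely suffices, and closes only after combining the ``$u$ outside $L$'' condition with the maximality-driven lower bound on $N$. The isolation of the low-degree vertices into the clique $L$, and the resulting direct $K_r$-packing drawn from $U\cap L$ in the clique case, is the characteristic new feature of the Ore-type argument compared with Koml\'os's minimum-degree original; the rest of the structure transposes without essential changes.
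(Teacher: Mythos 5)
Your overall architecture is close to the paper's (and to Koml\'os's Lemma~15, which the paper defers to), but there is a genuine gap in the high-degree case, and you have not identified the ingredient that actually closes it.

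Your observation that $L:=\{v:d_{R'}(v)<(\rho-2d')\ell'\}$ is a clique is correct and is exactly how the paper converts the Ore hypothesis into a near-minimum-degree statement. However, your ``clique case'' is vacuous for a simpler reason than you give: if $|U\cap L|\ge|H|$, then $U\cap L$, being a clique in $U$, contains a copy of $K_{|H|}\supseteq H$ disjoint from $\mathcal{M}$, contradicting maximality. So $|U\cap L|\le|H|-1$ always, which is how the paper phrases it (``all but at most $|H|-1$ of these vertices $x$ have degree at least $(1-1/\chi_{cr}(H)-d')\ell'$''). Your detour through disjoint copies of $K_r$ inside $U\cap L$ is not wrong, but it spends effort on a case that cannot occur.

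The real problem is your high-degree count. As you yourself note, the naive bound --- each $u\in U\setminus L$ has at most $(\omega/h+O(d'))\ell'$ non-neighbours, hence at most $\ell'/h+O(d'\ell')$ copies $H_0\in\mathcal{M}$ in which $u$ misses an entire large class --- is vacuous, because the total number of copies in $\mathcal{M}$ is $N/h\le\ell'/h$. Your proposed repair, lower-bounding $N$ via Proposition~\ref{average} applied to $R'[U]$, does not help: even the best lower bound one can get this way, $N\ge(\sigma/h-o(1))\ell'$, leaves the difference $N/h-(\ell'/h+O(d'\ell'))$ negative, so the ``good'' count is still not shown to be positive, let alone $\Omega(\ell')$. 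The missing idea --- which the paper's sketch states explicitly (``Since the subgraph of $R'$ induced by $\mathcal{L}$ must contain a small number of edges (else it will contain a copy of $H$), most vertices in $\mathcal{L}$ have small degree in this subgraph'') --- is that $R'[U]$ is $H$-free and hence, by Erd\H{o}s--Stone, has edge density at most $1-1/(r-1)+o(1)$. Since $1/(r-1)-\omega/h=\sigma/((r-1)h)>0$, this implies that a positive proportion of vertices $u\in U$ have at least $(\omega/h+c)|U|$ non-neighbours inside $U$ itself (for a constant $c=c(H)>0$). Subtracting these from the total non-neighbour budget leaves only $(\omega/h+O(d'))\ell'-(\omega/h+c)|U|$ non-neighbours in $V(\mathcal{M})$, which gives a bad-copy count strictly less than $N/h$ once $d'\ll\gamma,1/|H|$. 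That is what makes the greedy swap argument produce $\tau\ell'$ disjoint swaps. Without this sparsity of $R'[U]$, the double-counting does not close, and your appeal to Proposition~\ref{average} is aimed at the wrong quantity.
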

The proof of Claim~\ref{tilingclaim} is almost identical to that of Lemma~15 from~\cite{ko}. Full
details can be found in~\cite{treglown}.%
\COMMENT {claim is easily derived from Lemma 6.7 from my MSci project}
Here we briefly outline the proof. Let $\mathcal L$ denote the set of vertices not covered by the
largest $H$-packing in~$R'$. Since the subgraph of $R'$ induced by $\mathcal L$
must contain a small number of edges (else it will contain a copy of $H$), most vertices in $\mathcal L$
have `small' degree in this subgraph. Furthermore, all but at most $|H|-1$ of these vertices $x$ have
degree at least $(1-1/\chi _{cr} (H) -d')\ell'$ in $R'$ (otherwise we have a copy of $K_{|H|}$ and thus 
of $H$ in $\mathcal L$).
Now we proceed exactly as in Lemma~15 from~\cite{ko}. Indeed, for many such vertices $x$ we can
combine~$x$ with a suitable copy of $H$ in the packing and replace it with a copy of $H'$ and a
copy of $K_r$ containing $x$. Thus we obtain our desired collection of vertex-disjoint copies of $H$, $H'$ and $K_r$.

Consider a graph $F$ and $t \in \mathbb N$. Let $F(t)$ denote the graph obtained from $F$ by 
replacing every vertex $x\in V (F)$ by a set $U_x$ of $t$ independent vertices, and
joining each $u \in U_x$ to each $v\in U_y$ precisely when $xy$ is an edge in $F$. In other words we replace
the edges of $F$ by copies of $K_{t,t}$. We will refer to $F(t)$ as a 'blown-up' copy of
$F$. 

Define constants $$0\ll 1/\ell _0 \ll \eps \ll \eps'\ll d \ll d' \ll \gamma \ll \eta.$$
Apply the Regularity lemma with parameters $\eps$, $d$ and $\ell _0$ to $G$ to obtain clusters
$V_1, \dots , V_{\ell}$ of size $L$, an exceptional set $V_0$ and a reduced graph $R$. By Lemma~\ref{inherit}
we have that
$$d_R (V_{i}) +d_R (V_{j}) \geq 2 \left( 1- \frac{1}{\chi _{cr} (H)} -2d \right)|R|$$
for all $V_{i} \not = V_{j}$ with $V_{i}V_{j} \not \in E (R)$.

We wish to find an $H$-packing either in $R$ or in a blown-up copy of $R$, which covers at least
a $(1-\gamma)$-proportion of the clusters. Let $N$ denote the maximum number of clusters in $R$ covered by
an $H$-packing. If $N\geq (1-\gamma)\ell$ we are done. If not, then by 
Claim~\ref{tilingclaim} $R$ contains a collection of vertex-disjoint copies of $H$, $H'$ and $K_r$
which together cover at least $N+\tau \ell$ vertices. Let $$t:= (\omega-\sigma) |H|.$$
It is not hard to see that $H(t)$, $H'(t)$ and $K_r (t)$ all have perfect $H$-packings. 
Thus $R_1:=R(t)$ contains an $H$-packing covering at least $(N+\tau \ell)t$ vertices. Since $|R_1|=\ell t$,
a larger proportion of the vertices in $R_1$ are covered by this $H$-packing compared to the $H$-packing in $R$.
If $(N+\tau \ell)t\geq (1-\gamma)\ell t$ we are done. Otherwise we can continue this process: By applying
Claim~\ref{tilingclaim} to $R_1$ we see that $R_1 (t)=R(t^2)$ contains an $H$-packing covering a
substantially larger proportion of the vertices than the previous $H$-packing. Eventually we obtain a
graph $R'$, where $R'=R(t^k)$ for some constant $k=k(H,\gamma,d')$, such that there is
an $H$-packing $\mathcal H$ covering at least $(1-\gamma)|R'|$ clusters in $R'$. 

Suppose that $(A,B)$ is an $\eps$-regular pair of density at least $d$. By removing at most $t^k$ vertices
from $A$ and $B$, we can partition both $A$ and $B$ into $t^k$ equal subclusters $A_1, \dots ,A_{t^k}$ and
$B_1, \dots , B_{t^k}$ respectively. We can do this in such a way that each
$(A_i, B_j)$ is an $\eps '$-regular pair with density at least $d-\eps$.
So since each edge of $R$ corresponds to an $\eps$-regular pair of density at least $d$, the edges
of $R'$ can be viewed as corresponding to $\eps '$-regular pairs with density at least $d-\eps$. 
(Thus for each $V_i \in V(R)$ there are $t^k$ vertices in $R'$ which correspond to subclusters of $V_i$.)

Suppose  that $H^*$ is a copy of  $H$ in $\mathcal H$. Consider the subgraph $H^* _G$ of $G$ whose
vertex set consists of all those vertices lying in the clusters of $H^*$, and whose edge set consists of
all those edges which lie in an $\eps '$-regular pair corresponding to an edge in $H^*$. It is easy to
see (for example by repeated use of Lemma~\ref{emblemma}) that there is an $H$-packing covering
almost all the vertices in $H^* _G$. We find such an $H$-packing for each copy of $H$ in $\mathcal H$.
Since $\mathcal H$ covers at least $(1-\gamma)|R'|$ of the clusters in $R'$, and since
$\gamma \ll \eta$, the union of all these $H$-packings covers at least $(1-\eta)n$ vertices of $G$, as desired.

\section{Proof of Lemma~\ref{exceptional}}\label{sec5}
Let~$H$ be as in the statement of the lemma and let~$G$ be a graph of sufficiently
large order~$n$
which satisfies~(\ref{orecon}). Recall that $r=\chi (H)$ and $m=CE (H)$. Let~$x$ be
any vertex of~$G$.
We have to find a copy of $H$ in $G$ which contains~$x$. Suppose first that $r=2$.
Then~$H$
must have an isolated vertex~$v$ (since $CE(H)<\infty$). So we can apply
Proposition~\ref{average} to find a copy
of $H-v$ in~$G-x$ and thus a copy of~$H$ in~$G$ (where~$x$ plays the role of~$v$).

So suppose that $r\ge 3$. 
Choose additional constants $\eps,d$ and~$\alpha$ such that
\begin{equation*}
0 < \eps \ll d \ll \alpha \ll \eta
\end{equation*}
and let $\ell_0:=1/\eps$. Apply the Regularity lemma with parameters $\eps ,d,
\ell_0$ to~$G$
to obtain clusters $V_1,\dots,V_\ell$ of size~$L$, an exceptional set~$V_0$, a pure
graph $G'$
and a reduced graph~$R$. Let 
$$
k:=(m+2)r-2.
$$
Lemma~\ref{inherit} implies that
\begin{equation}\label{eq:oreR1}
d_R (V_i)+d_R (V_j) \geq 2\left( 1- \frac{1}{r-\frac{2}{m+2}}+\frac{\eta}{2}\right)|R|
= 2\left( 1- \frac{m+2}{k}+\frac{ \eta}{2}\right)|R|
\end{equation}
for all $V_i \not = V_j \in V(R)$ with $V_i V_j \not \in E (R)$.
By adding the vertices of one cluster to~$V_0$ if necessary (and deleting this
cluster from~$R$)
we may assume that $x\in V_0$. (So now $|V_0|\le 2\eps n$.)
We say that \emph{$x$ is adjacent to a cluster $V_i \in V(R)$}
if $x$ is adjacent to at least $\alpha L$ vertices of $V_i$ in $G$. We denote by~$S$
the set
of clusters $V_i \in V(R)$ that~$x$ is adjacent to, and define $s:=|S|/|R|$.
Also, we write $\overline{S}:=V(R) \setminus S$.
Note that 
\begin{equation} \label{dgx}
d_G(x) \le |S| L +|\overline{S}| \alpha L +|V_0| \le (s+\alpha+2\eps) n \le
(s+2\alpha )n 
\end{equation}
and so
\begin{equation} \label{lowers}
s\ge \frac{\delta(G)}{n}-2\alpha
\stackrel{(\ref{orecon})}{\ge}\left( 1-\frac{2}{r-\frac{2}{m+2}}+2\eta\right)-2 \alpha \ge 1-\frac{2(m+2)}{k}+\eta.
\end{equation}
In particular $s>0$ since $r\ge 3$.
Our aim now is to find either a copy $K'_r$ of~$K_r$ in $R$ containing $r-1$
clusters adjacent to~$x$
(i.e.~$|V(K'_r) \cap S| \ge r-1$), or
a copy $K'_{r+m}$ of~$K_{r+m}$ in~$R$ containing~$r-2$ clusters adjacent to~$x$. 
In both cases we could apply the Embedding lemma (Lemma~\ref{emblemma}) to find the
desired copy $H_x$
of~$H$ in~$G$. Indeed, in the case where we find $K'_{r+m}$ we could use~$x$ to 
play the role of a vertex $y \in V(H)$ for which there exists an $(r-2)$-colouring of
$H[N(y)]$ that can be extended to an $(r+m)$-colouring of~$H$. The neighbourhood
$N_{H}(y)$
of~$y$ would be embedded into the clusters belonging to $V(K'_{r+m}) \cap S$ and
$H-N_{H}(y)$
would be embedded into the clusters belonging to $V(K'_{r+m})$
(so here we use the fact that $CE(H)=m$). In the case where we find~$K'_r$, $x$ can
play the role
of any vertex of $H$. Given some optimal colouring of $H$, the vertices of $H$ which
have a different colour
than $x$ are embedded into the clusters in $V(K'_r) \cap S$ (so we only use that
$\chi(H)=r$ in this case).

Let $C$ be the set of clusters $U\in S$ with $d_R(U) < (1-(m+2)/k+\eta/2)|R|$.
By~(\ref{eq:oreR1}), $C$ induces a clique. So we may assume that $|C|<r$, since otherwise we have our copy $K'_r$ of $K_r$. 
Suppose now that for some $1\leq i\leq r-1$ we%
      \COMMENT{clearly we may assume $i\ge 1$}
have already found~$i$ clusters $U_1, \dots,U_i \in S \setminus C$ such that 
$U_1, \dots, U_i$ form a copy~$K'_i$ of~$K_i$ in~$R$. Then
\begin{equation} \label{intersect}
|\bigcap _{1\le j\le i}  N_R (U_j)|  \ge
-(i-1)|R| + \sum_{j=1}^i d_R(U_j)  \ge \left(1-\frac{i(m+2)}{k} + \eta/2 \right)|R|. 
\end{equation}

\medskip

\noindent
{\bf Case 1.} $1-s \le (2m+2)/k$

\smallskip

\noindent
In this case, we will find a copy of $K_r$ which contains at least $r-1$ vertices in
$S$.
Suppose that $i \le r-2$ and we have found $U_1, \dots,U_i$ as above. 
Then $1-i(m+2)/k \ge (2m+2)/k$ and so~(\ref{intersect}) implies that
the common neighbourhood $N_R (K'_i)$ of $K'_i$ satisfies
$|N_R (K'_i) | \ge \left( 1- s +\eta/2 \right) |R|.$
So we can choose $U_{i+1}\in S \setminus C$ to extend $K'_i$ into a copy of
$K_{i+1}$ in $R[S \setminus C]$
(we can avoid $C$ when choosing $U_{i+1}$ since $|C| < r \ll \eta |R|$).
If $i=r-1$, then $1-\frac{i(m+2)}{k}= \frac{m}{k} \ge 0$. So $|N_R (K'_i) | \ge
\eta|R|/2$ 
and we can extend $K'_i=K'_{r-1}$ into the desired copy of $K_r$ using an arbitrary
vertex of $R$.

\medskip

\noindent
{\bf Case 2.} $1-s \ge (2m+2)/k$

\smallskip

\noindent
In this case, we will either find a copy of $K_r$ which contains at least $r-1$
vertices in $S$
or find a copy of $K_{r+m}$ which contains at least $r-2$ vertices in $S$.
Suppose that $i \le r-3$ and we have found $U_1, \dots,U_i$ as described before Case~1
which form a copy $K'_i$ of $K_i$ in $R[S \setminus C]$. 
Note that
$$
1- \frac{i(m+2)}{k} \ge \frac{k-(r-3)(m+2)}{k}=\frac{3(m+2)-2}{k} \ge 
\frac{2(m+2)}{k} 
\stackrel{(\ref{lowers})}{\ge} 1-s. 
$$
Thus~(\ref{intersect}) implies that 
we can choose a cluster $U_{i+1} \in S \setminus C$ which forms a $K_{i+1}$ together
with $K'_i$.
This shows that we can find a copy $K'_{r-2}$ of $K_{r-2}$ which lies in
$R[S\setminus C]$.
Note that~(\ref{intersect}) also implies that
the common neighbourhood $N_R (K'_{r-2})$ of $K'_{r-2}$ satisfies
\begin{equation} \label{Kin2}
|N_R (K'_{r-2}) | \ge \left( 1- \frac{(r-2)(m+2)}{k} + \frac{\eta}{2} \right) |R| 
= \left( \frac{2(m+1)}{k} +\frac{\eta}{2} \right)|R|.
\end{equation}
Now we aim to extend $K'_{r-2}$ into a copy $K'_{r+m}$ of $K_{r+m}$.
We will aim to find the additional vertices in $\overline{S}$.
Suppose for some $0\leq i\leq m+1$ we have found~$i$ clusters $W_1, \dots,W_i \in
\overline{S}$
which together with $K'_{r-2}$ form a copy~$K'_{r-2+i}$ of~$K_{r-2+i}$ in~$R$.
We will need a lower bound on $d_R(W_j)$ for all $j=1,\dots,i$.
To derive this, note that the definition of $S$ implies that
$W_j$ contains a vertex $y$ which is not adjacent to $x$ in $G$.
So~(\ref{orecon}) and~(\ref{dgx}) and the inequality in Case~2 imply that 
$$
d_G(y) \ge \left( 2 \left (1-\frac{m+2}{k}+\eta \right) - s -2\alpha \right) n 
\ge \left(1  -\frac{2}{k} +\eta \right) n
$$
and so $d_{G'}(y)\ge (1-2/k+\eta/2)n$.
But each cluster containing a neighbour of $y$ in $G'$ must be a neighbour of $W_j$
in~$R$. 
Hence 
\begin{equation} \label{sbar2}
d_R(W_j) \ge \frac{d_{G'}(y)-|V_0|}{L} \ge \left( 1 -\frac{2}{k} \right) |R|.
\end{equation}
So the common neighbourhood $N_R (K'_{r-2+i})$ of $K'_{r-2+i}$ satisfies 
\begin{equation} \label{common}
|N_R (K'_{r-2+i})|   \ge  |N_R (K'_{r-2}) | -i|R|+\sum_{j=1}^i d_R(W_j) 
\stackrel{(\ref{Kin2}),(\ref{sbar2})}{\ge } 
\left( \frac{2(m+1)}{k}  -i\frac{2}{k} +\frac{\eta}{2}\right)|R| \ge \frac{\eta|R|}{2}.
\end{equation}
So we can choose a vertex $W_{i+1} \in V(R) \setminus C$ that is a common neighbour
of the 
clusters in~$K'_{r-2+i}$. Suppose that $W_{i+1} \in S$. Then together with $K'_{r-2}$
this forms a copy $K'_{r-1}$ of $K_{r-1}$ in $R[S \setminus C]$. 
Now~(\ref{intersect}) implies that $|N_R(K'_{r-1})| \ge \left(m/k+\eta/2 \right)|R|$
and so we can extend $K'_{r-1}$ to a copy of $K_r$ with at least $r-1$ vertices in $S$.
So we may assume that $W_{i+1} \in \overline{S}$.
Continuing in this way, we obtain a copy of~$K_{r+m}$ having~$r-2$ clusters in~$S$,
as required.


\section{Proof of Lemma~\ref{hcf}}\label{sec6}
\subsection{Preliminaries and an outline of the proof}
Let $H$, $G$ and $\eta >0$ be as in Lemma~\ref{hcf} and let $r:=\chi (H)$.
Choose $t \in \mathbb N$ such that $t|H|(r-1)\geq 4r/\eta$.
Let $z_1:=t(r-1)\sigma (H)$ and $z:=t(|H|-\sigma (H))$. Put $\gamma :=z_1/z$. Note
that $0<\gamma<1$
since ${\rm hcf} (H)=1$. Define $B^*$ to be the complete $r$-partite graph with one
vertex class
of size~$z_1$ and~$r-1$ vertex classes of size~$z$. Then~$B^*$ has a perfect
$H$-packing and
$\eta |B^*|/4\geq r$. Moreover,
\begin{align}\label{neweq}
\chi_{cr} (B^*)= \chi _{cr} (H) =(r-1) \frac{|H|}{|H|-\sigma (H)}
=r-1+\frac{(r-1)\sigma (H)}{|H|-\sigma (H)}=r-1+\gamma.
\end{align}
Choose $s \in \mathbb N$ and a new constant~$\lambda$ such that
$0 <\lambda \ll \eta, \gamma, 1-\gamma$ as well as $s_1:=\gamma (1+ \lambda)s \in
\mathbb N$ and $s_1 \leq s$.
Let~$B'$ denote the complete $r$-partite graph with one vertex class of size~$s_1$
and~$r-1$
vertex classes of size~$s$. Thus,
\begin{align}\label{neweq2} 
\chi _{cr} (B')=(r-1) \frac{|B'|}{|B'|-s_1}=r-1+ \gamma(1+\lambda).
\end{align}
Note that the proportion $\gamma(1+\lambda)$ of the size of the smallest vertex
class of~$B'$
compared to the size of one of the larger classes is slightly larger than the 
corresponding proportion~$\gamma$ associated with~$B^*$. We can therefore choose~$s$ and~$\lambda$
in such a way that~$B'$ has a perfect $B^*$-packing, and thus a perfect $H$-packing.
(Indeed, the perfect $B^*$-packing would consist of `most' but not all of the copies of
$B^*$ having their smallest vertex class lying in the smallest vertex class of $B'$.)

We now give an outline for the proof of Lemma~\ref{hcf}. We first apply the
Regularity lemma to~$G$
to obtain a reduced graph~$R$. Since~$R$ almost inherits the Ore-type condition
on~$G$ we may apply
Theorem~\ref{orekomlos} to find an almost perfect $B'$-packing of~$R$. We then remove
all clusters from~$R$ that are not covered by this $B'$-packing and add the vertices
in these
clusters to the exceptional set~$V_0$.

For each exceptional vertex $x \in V_0$, we apply Lemma~\ref{exceptional} to
find a copy of~$H$ in~$G$ containing~$x$, and remove the vertices in this copy
from~$G$.
Thus some vertices in clusters in~$R$ will be removed from~$G$. The copies of~$H$
will be chosen
to be disjoint for different exceptional vertices.

Our aim is to apply the Blow-up lemma to each copy~$B'_i$ of~$B'$ in the
$B'$-packing of~$R$
in order to find an $H$-packing in~$G$ which covers all the vertices belonging to
(the modified)
clusters in~$B'_i$. Then all these $H$-packings together with all those copies
of~$H$ chosen for the
exceptional vertices would form a perfect $H$-packing in~$G$. However, to do this,
we need that
the complete $r$-partite graph~$F^*_i$ whose $j$th vertex class is the union of all the
clusters in the $j$th vertex class of~$B'_i$ has a perfect $H$-packing.
Lemma~\ref{useful} gives a condition which guarantees this.

To apply Lemma~\ref{useful} we need that~$|F^*_i|$ is divisible by~$|H|$. We will
remove a
bounded number of further copies of~$H$ from~$G$ to ensure this (see
Section~\ref{sec6sub}).
Furthermore, we require that~$F^*_i$ has~$r-1$ vertex classes of roughly the same
size, $u$ say,
and that its other vertex class is a little larger than $\gamma u$. But this
condition will be
satisfied automatically by the choice of the sizes of the vertex classes in~$B'$.
In fact, this is the reason why we chose a $B'$-packing in~$R$ rather than a
$B^*$-packing.
The above strategy is based on that in~\cite{kuhn}. However, there are additional
difficulties.

\subsection{Applying the Regularity lemma and modifying the reduced
graph}\label{applying}
We define further constants satisfying
\begin{align*}
0< \eps \ll d \ll \eta _{1} \ll \beta \ll \alpha \ll \lambda \ll \eta , \gamma , 1-
\gamma.
\end{align*}
We also choose $\eta _1$ so that
$$\eta _1 \ll \frac{1}{|B'|}.$$
Throughout the proof we assume that the order~$n$ of our graph~$G$ is sufficiently
large for
our calculations to hold. Apply the Regularity lemma with parameters $\eps$, $d$ and
$\ell_0:=1/\eps$ to obtain clusters
$V_1, \dots , V_\ell$ of size~$L$, an exceptional set~$V_0$, a pure graph~$G'$ and a
reduced graph~$R$.
Let $m:=CE (H)$. By Lemma~\ref{inherit} we 
have that
\begin{align*}
d_R (V_{j_1})+d_R (V_{j_2}) \geq \max \left\{ 2\left(1-\frac{1}{r-\frac{2}{m+2}}+
\frac{\eta}{2}\right)|R|, 
2\left(1-\frac{1}{\chi _{cr} (H)}+ \frac{\eta}{2}\right)|R|\right\}
\end{align*}
for all $V_{j_1} \not = V_{j_2} \in V(R)$ with $V_{j_1} V_{j_2} \not \in E(R)$.
Together with~(\ref{neweq}) and~(\ref{neweq2}) this implies that
\begin{align*}
d_R (V_{j_1})+d_R (V_{j_2}) \geq 2\left(1-\frac{1}{\chi _{cr} (B')}\right)|R|
\end{align*}
for all $V_{j_1} \not = V_{j_2} \in V(R)$ with $V_{j_1} V_{j_2} \not \in E(R)$.
So we can apply Theorem~\ref{orekomlos} to~$R$ to obtain a $B'$-packing covering all
but at most
$\eta _1 |R|$ vertices. We denote the copies of~$B'$ in this packing by $B'_1,
\dots,B'_{\ell'}$.
We delete all the clusters not contained in some~$B'_i$ from~$R$ and add all
vertices lying in
these clusters to~$V_0$. So $|V_0| \leq \eps n+\eta _1 n \leq 2 \eta _1 n$.
We now refer to~$R$ as this modified reduced graph. We still have that 
\begin{align}\label{oreconR}
d_R (V_{j_1})+d_R (V_{j_2}) \geq \max \left\{ 2\left(1-\frac{1}{r-\frac{2}{m+2}}+
\frac{\eta}{4}\right)|R|, 
2\left(1-\frac{1}{\chi _{cr} (H)}+ \frac{\eta}{4}\right)|R|\right\}
\end{align}
for all $V_{j_1} \not = V_{j_2} \in V(R)$ with $V_{j_1} V_{j_2} \not \in E(R)$.
Recall that by definition of $B',$ each $B'_i$ contains a perfect $B^*$-packing.
Fix such a $B^*$-packing for each $i=1, \dots,\ell'$. The union of all 
these $B^*$-packings gives us a perfect $B^*$-packing $\mathcal B^*$ in~$R$.

Given any~$B'_i$, it is easy to check that we can replace each cluster~$V_j\in
V(B'_i)$ with a
subcluster of size $L':=(1- \eps |B'|)L$ such that for each edge $V_{j_1} V_{j_2}$
of~$B'_i$
the chosen subclusters of~$V_{j_1}$ and~$V_{j_2}$ form a $(2 \eps,
d/2)$-super-regular pair in~$G'$.
We do this for each $i=1,\dots,\ell'$ and add all the vertices not belonging to our
chosen
subclusters to~$V_0$. We now refer to these
subclusters as the clusters of~$R$. Then for every edge $V_{j_1} V_{j_2}$ of~$R$  
the pair $(V_{j_1},V_{j_2})_{G'}$ is still $2 \eps$-regular and has density more
than $d/2$.
Moreover,
\begin{equation}\label{eq:V0}
|V_0|\le 2\eta_1 n+\eps|B'|n\le 3\eta_1n.
\end{equation}
We now partition each cluster~$V_j$ into a red part~$V^{red} _j$ and a blue
part~$V^{blue}_j$
where $| \, |V^{red}_j| -|V^{blue} _j| \,| \leq \eps L'$ and 
$| \,|N_G (x) \cap V^{red}_j|-|N_G (x) \cap V^{blue}_j| \,|\leq \eps L'$ for all $x
\in V(G)$.
(Consider a random partition to see that there are~$V^{red} _j$ and~$V^{blue} _j$
with these properties.)
Together all these partitions of the clusters yield a partition of $V(G)-V_0$ into
a set~$V^{red}$ of red vertices and a set~$V^{blue}$ of blue vertices. In
Section~\ref{except}
we will choose certain copies of~$H$ in~$G$ to cover the exceptional vertices
in~$V_0$, but each of these
copies will avoid the red vertices. All the vertices contained in these copies
of~$H$ will be
removed from the clusters they belong to. However, for every edge $V_{j_1}V_{j_2}$
of~$B'_i$
the modified bipartite subgraph of~$G'$ whose vertex classes are the remainders
of~$V_{j_1}$
and~$V_{j_2}$ will still be $(5\eps,d/5)$-super-regular since it still contains all
vertices
in $V^{red}_{j_1}\cup V^{red}_{j_2}$. Furthermore, all edges in~$R$ will still
correspond to
$5\eps$-regular pairs of density more than $d/5$. After 
Section~\ref{except} we will only remove a bounded number of further vertices from
the clusters,
which will not affect the super-regularity significantly.

\subsection{Incorporating the exceptional vertices}\label{except}
In this section we cover all the exceptional vertices with vertex-disjoint copies of
$H$. Let $G^{blue}$ denote the induced subgraph
of $G$ with vertex set $V^{blue} \cup V_0$.
The definition of~$V^{blue}$, (\ref{oreG}) and~(\ref{eq:V0})
together imply that
$$
d_{G^{blue}}(x)+d_{G^{blue}}(y) \geq \max \left\{ 2   \left( 1 -
\frac{1}{r-\frac{2}{m+2}} + \frac{\eta}{2} \right)|G^{blue}|,
2  \left( 1 - \frac{1}{\chi _{cr} (H)} + \frac{\eta}{2}  \right) |G^{blue}| \right\}
$$
for all non-adjacent $x\not =y \in V(G^{blue})$. Let $v_1,\dots,v_{|V_0|}$ be an
enumeration of the
exceptional vertices. Lemma~\ref{exceptional} gives us a copy~$H_{v_1}$ of~$H$
in~$G^{blue}$ covering~$v_1$.
Delete the vertices of~$H_{v_1}$ from~$G^{blue}$ and apply the lemma again to find a
copy
$H_{v_2}$ of~$H$ covering~$v_2$. We would like to continue this way. However, for
later purposes it is
convenient to be able to assume that 
from each cluster we only delete a small proportion of vertices during this process.
So before choosing the copy~$H_{v_j}$ for~$v_j$ (say), we call $B'_i$ \emph{bad} if
it contains a cluster
meeting the copies $H_{v_1},\dots,H_{v_{j-1}}$ that we have chosen before in at
least $\beta L'$ vertices.
So at most $|V_0||H|/(\beta L')\le 3\eta_1 |H|n/(\beta L')\le \eta \ell'/10$ of
the~$B'_i$ are bad.%
      \COMMENT{since $\eta_1\ll \beta,\eta,1/|B'|$}
We delete all the vertices belonging to clusters in bad $B'_i$ from~$G^{blue}$.
Since there are at most $\eta n/10\le \eta |G^{blue}|/4$ such vertices, we can still
apply Lemma~\ref{exceptional} to find~$H_{v_j}$. Thus we can cover all the
exceptional vertices.
We remove  all the vertices lying in the copies $H_{v_1},\dots,H_{v_{|V_0|}}$ of~$H$
from the
clusters they belong to (and from~$G$).

\subsection{Making the blow-up of each $B \in \mathcal B^*$ divisible by
$|H|$}\label{sec6sub}
Given a subgraph $S \subseteq R$ we write $V_G (S)$ for the set of all those
vertices of~$G$ that 
belong to a cluster in~$S$. Our aim now is to find, for each~$B'_i$ in our
$B'$-packing in~$R$,
an $H$-packing in~$G$
covering all the vertices in~$V_G (B'_i)$. Thus, taking the union of these
$H$-packings and the copies 
of~$H$ containing the vertices in $V_0$, we will obtain a perfect $H$-packing
in~$G$. If we can
ensure that the complete $r$-partite graph whose $j$th vertex class is the union of
all clusters
in the $j$th vertex class of~$B'_i$ has a perfect $H$-packing, then by the Blow-up
lemma the subgraph 
of~$G'$ corresponding to~$B'_i$ will have a perfect $H$-packing. By
Lemma~\ref{useful} the
former will turn out to be the case provided that~$|H|$ divides~$|V_G (B'_i)|$. So our
next aim is to remove a bounded number of copies of~$H$ from~$G$ to ensure that
$|V_G (B'_i)|$
is divisible by~$|H|$ for all $i =1,\dots,\ell'$. This in turn will be achieved by
ensuring that~$|H|$
divides~$|V_G (B)|$ for all $B \in \mathcal B^*$.

Consider the auxiliary graph~$F$ whose vertices are the elements of~$\mathcal B^*$
where
$B_1 , B_2 \in \mathcal B^* $ are adjacent in $F$ if~$R$ contains 
a copy of~$K_r$ with one vertex in~$B_1$ and~$r-1$ vertices in~$B_2$ or vice versa.

Suppose first that~$F$ is connected. Consider a spanning tree~$T$ of~$F$ with root
$B_0\in \mathcal B^*$,
say. If $B_1$, $B_2\in \mathcal B^*$ are adjacent in~$F$ then by the Embedding
lemma~$G$ contains a
copy of~$H$ with one vertex in~$V_G (B_1)$ and all the other vertices in~$V_G
(B_2)$, or vice versa.
(To see this, let~$K'_r$ be a copy of~$K_r$ in~$R$ with one vertex $V\in V_R(B_1)$
and all other
vertices in~$V_R(B_2)$. Choose any $V'\in V_R(B_2)$ which is adjacent to all of
$V(K'_r)\setminus \{V \}$.
Then our copy of~$H$ will have one vertex, $v$ say, in~$V$. All other vertices
of~$H$ lying in the
same colour class as~$v$ will be embedded into~$V'$ and all the remaining vertices
of~$H$ will
be embedded into $V(K'_r)\setminus \{V \}$.) In fact, we can choose~$|H|-1$ disjoint
such copies of~$H$. 
So by removing at most $|H|-1$ such copies of~$H$ we can ensure $|V_G (B_1)|$ is
divisible by~$|H|$.

We can use this observation to `shift the remainders mod~$|H|$' along~$T$ to achieve
that~$|H|$
divides~$|V_G (B)|$ for all $B \in \mathcal B^*$ as follows.
Let~$j_{max}$ be the largest distance of some $B \in \mathcal B^*$ from~$B_0$
in~$T$. Then for
all $B \in \mathcal B^*$ of distance~$j_{max}$ from~$B_0$ we can remove copies
of~$H$ as indicated
above to ensure that~$|H|$ divides~$|V_G (B)|$. We can repeat this for all those
$B \in \mathcal B^*$ of distance $j_{max}-1$ from~$B_0$ etc.~until $|V_G (B)|$ is
divisible by~$|H|$
for all $B \in \mathcal B^*$. (This follows as $\sum _{B \in \mathcal B^*} |V_G
(B)|$ is divisible by~$|H|$
since~$|G|$ is divisible by~$|H|$.)

So we may assume that~$F$ is not connected. Let~$\mathcal C$ denote the set of all
components of~$F$.
Given $C\in\mathcal C$, we denote by $V_R (C) \subseteq V(R)$ the set of all those
clusters
which belong to some $B \in \mathcal B^*$ with $B \in C$. We write $V_G (C)\subseteq
 V(G)$
for the union of all the clusters in~$V_R (C)$. We will show that we can remove a
bounded
number of copies of~$H$ from~$G$ to achieve that~$|V_G (C)|$ is divisible by~$|H|$
for all $C \in \mathcal C$. 
As in the case when~$F$ is connected, we can then `shift the remainders mod~$|H|$'
along a spanning tree of each component to make~$|V_G (B)|$ divisible by~$|H|$ for
all $B \in \mathcal B^*$.

In the case when $r=2$ this is straightforward. Indeed, in this case $H$ contains an
isolated vertex
(since $CE(H)<\infty$). So given any $C \in \mathcal C$ we can apply the Embedding
lemma to find
$|H|-1$ vertex-disjoint copies of~$H$ in~$G$ such that one vertex (playing the role
of the isolated vertex)
lies in~$V_G (C)$ and the other vertices lie in~$V_G (C')$ for some
$C'\in \mathcal C\setminus\{C\}$. By removing a suitable number of such copies we
can ensure
that~$|H|$ divides~$|V_G (C)|$. Since in the above argument we can choose
any~$C'\in \mathcal C\setminus\{C\}$ to contain the remaining vertices of our copy
of~$H$
(and since~$|G|$ is divisible by~$|H|$) we can apply this argument repeatedly to make
$|V_G (C'')|$ divisible by $|H|$ for all $C'' \in \mathcal C$.

So now we consider the case when $r \geq 3$. We need the following claim.

\begin{claim}\label{c1}
Let $C_1, C_2 \in \mathcal C$ and let $V \in V_R(C_2)$. Then 
$$|N_R(V) \cap V_R (C_1)| < \left( 1- \frac{1}{r-1+ \gamma}\right)|V_R (C_1)|.$$
\end{claim}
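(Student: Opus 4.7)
The plan is to argue by contradiction using the defining structure of the auxiliary graph $F$.

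First, I would recall the relevant structure. Each $B \in \mathcal{B}^*$ is a copy of $B^*$, i.e.\ a complete $r$-partite subgraph of $R$ with one vertex class of size $z_1 = t(r-1)\sigma(H)$ and $r-1$ vertex classes of size $z = t(|H|-\sigma(H))$; so $|V_R(B)| = z_1 + (r-1)z = (r-1+\gamma)z$. Crucially, any two clusters lying in distinct classes of the same $B$ are adjacent in $R$ (since $B$ was extracted from some complete $r$-partite $B'_i$ in the $B'$-packing, and $B^* \subseteq B'$).

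The main step is to establish a local bound: for each $B \in C_1$, the vertex $V$ has a neighbour in at most $r-2$ of the $r$ classes of $B$. Suppose otherwise, so $V$ has a neighbour in at least $r-1$ distinct classes of $B$. Pick one such neighbour $W_i$ from each of these $r-1$ classes. Since the $W_i$ lie in pairwise distinct classes of $B$, they form a $K_{r-1}$ in $R$, and adjoining $V$ gives a copy of $K_r$ in $R$. Letting $B_V \in C_2$ denote the element of $\mathcal{B}^*$ containing $V$, this $K_r$ has one vertex ($V$) in $B_V$ and $r-1$ vertices (the $W_i$'s) in $B$, so by the definition of the edges of $F$, the elements $B_V$ and $B$ are adjacent in $F$. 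But $B_V \in C_2$ and $B \in C_1$ with $C_1 \neq C_2$, a contradiction.

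It remains to assemble the worst-case count. Given the local bound, $|N_R(V) \cap V_R(B)|$ is maximised when $V$ misses precisely the two smallest classes of $B$ (that is, the unique class of size $z_1$ and one class of size $z$), giving at most $(r-2)z$. Hence
\[
|N_R(V) \cap V_R(B)| \leq (r-2)z = \frac{r-2}{r-1+\gamma}|V_R(B)|.
\]
Summing over $B \in C_1$ and using $\gamma > 0$,
\[
|N_R(V) \cap V_R(C_1)| \leq \frac{r-2}{r-1+\gamma}|V_R(C_1)| < \frac{r-2+\gamma}{r-1+\gamma}|V_R(C_1)| = \left(1-\frac{1}{r-1+\gamma}\right)|V_R(C_1)|,
\]
which is exactly the desired inequality. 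There is no real obstacle here beyond correctly matching the combinatorial structure of $B^*$ to the numerical bound $\chi_{cr}(B^*) = r-1+\gamma$; the only subtle point is recognising that the forbidden $K_r$ must split as $1+(r-1)$ across $C_2$ and $C_1$ (not the reverse), so the contradiction comes from finding a $K_{r-1}$ \emph{inside} a single $B \in C_1$, which is precisely why we exploit that $B$ is complete $r$-partite in $R$.
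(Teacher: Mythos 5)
Your proof is correct and follows essentially the same approach as the paper: the key combinatorial step in both is that $V$ cannot meet $r-1$ of the $r$ vertex classes of any $B\in C_1$, because otherwise $B$ (being complete $r$-partite in $R$) would yield a copy of $K_r$ through $V$, making $B$ adjacent in $F$ to the element of $\mathcal B^*$ containing $V$ and contradicting that $C_1\neq C_2$. The only difference is organizational: the paper argues by contradiction and averages over $B\in C_1$ to locate one bad $B$ with $|N_R(V)\cap B|\ge |B|-z$, whereas you prove the local bound $|N_R(V)\cap V_R(B)|\le (r-2)z$ for every $B\in C_1$ and then sum; the numerics $(r-2)z<|B|-z=(1-1/(r-1+\gamma))|B|$ (using $z_1>0$) are identical.
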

\proof Suppose not. Then there exists some $B \in \mathcal B^*$ such that  $B \in
C_1$ and 
\begin{align*}
|N_R (V) \cap B| \geq \left( 1- \frac{1}{r-1+\gamma}\right) |B|=
|B| -\frac{(r-1)z+z_1}{r-1 + z_1/z}=|B|-z.
\end{align*}
Hence $V$ has a neighbour in at least $r-1$ vertex classes of~$B$. So~$R$ contains a
copy of~$K_r$
with one vertex, namely~$V$, in a copy $B_0 \in \mathcal B^*$ and~$r-1$ vertices 
in~$B$. So~$B$ and~$B_0$ are adjacent in~$F$. But they lie in different components
of~$F$, a contradiction. 
\endproof

We now show that we can remove a bounded number of copies of~$H$ from~$G$ to
make~$|V_G (C)|$
divisible by~$|H|$ for \emph{some} $C \in \mathcal C$. (In particular, if~$F$
consists of
exactly two components~$C$ and~$C'$ this also ensures that $|V_G (C')|$ is divisible
by~$|H|$.)

\begin{claim}\label{c2}
There exists a component $C \in \mathcal C$
with $|V_R (C)| \leq |R|/2$ for which we can ensure that $|H|$ divides $|V_G (C)|$ by
removing at most $|H|-1$ copies of~$H$ from~$G$.
\end{claim}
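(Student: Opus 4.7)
My plan is to select $C \in \mathcal{C}$ with $|V_R(C)|$ minimum; since we are in the case $|\mathcal{C}| \geq 2$, this gives $|V_R(C)| \leq |R|/2$. The goal is then to find $|H|-1$ pairwise vertex-disjoint copies of $H$ in $G$, each meeting $V_G(C)$ in the same number $a$ of vertices with $\gcd(a,|H|)=1$; removing a suitable subset of these copies (of size at most $|H|-1$) then makes $|V_G(C)|$ divisible by $|H|$.

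To construct such copies, I would first locate a single cross-clique in $R$: a copy of $K_{r+m}$ containing exactly one cluster $V^* \in V_R(C)$ and the remaining $r+m-1$ clusters in $V_R(\overline C)$. The Embedding lemma applied to the corresponding super-regular blow-up in $G$, together with the $(r+m)$-colouring of $H$ given by $CE(H)=m$ with the distinguished vertex $x^*$ sent to $V^*$, then produces many vertex-disjoint copies of $H$, each meeting $V_G(C)$ in exactly $a$ vertices, where $a$ is the size of the colour class of $H$ containing $x^*$. The assumption ${\rm hcf}(H)=1$ (which forces varied colour-class sizes across optimal colourings) allows us either to choose this colour class to have size coprime to $|H|$, or to supplement with copies built from a $K_r$-cross-clique based on some optimal $r$-colouring, whose splits together with $a$ generate $\mathbb{Z}/|H|\mathbb{Z}$; in either case at most $|H|-1$ removals are enough to kill the residue.

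The main obstacle is producing the cross-clique itself, since Claim~\ref{c1} implies that every $U \in V_R(\overline C)$ has few neighbours in $V_R(C)$, and hence by double counting the average $V \in V_R(C)$ has comparatively few neighbours in $V_R(\overline C)$. This is overcome by mimicking the greedy argument from the proof of Lemma~\ref{exceptional}: by Claim~\ref{c1} there are many non-neighbours $U \in V_R(\overline C)$ of any fixed $V^*$, and the Ore condition (\ref{oreconR}) then forces $N_R(V^*) \cap N_R(U)$ to have linear size in $|R|$. Iterating this step --- picking each successive cluster of the clique from $V_R(\overline C)$ and controlling the shrinking common neighbourhood via (\ref{oreconR}) at each stage, while keeping the small subset $V_R(C)$ out of the extension so the constraints from Claim~\ref{c1} cannot interfere --- assembles the required $(r+m)$-clique.
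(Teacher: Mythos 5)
Your proposal diverges from the paper's proof in a way that leaves two genuine gaps.

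First, the per-copy shift. The paper does not embed a whole colour class of an $(r+m)$-colouring into $V_G(C)$. Instead it looks for a \emph{good} copy of $K^-_{r+1}$ in $R$ — one of whose two non-adjacent (``small'') vertices is the unique cluster inside (or outside) $V_R(C_1)$. Because $K^-_{r+1}$ has an $r$-colouring with one size-two colour class, a colour class of $H$ can be split across the two small clusters, placing \emph{exactly one} vertex of $H$ into $V_G(C_1)$ (Case (i)) or exactly $|H|-1$ (Case (ii)). Since $\gcd(1,|H|)=1$ always, at most $|H|-1$ removals suffice with no arithmetic conditions on $H$. Your version shifts by $a=$ (size of the colour class of $x^*$ in an $(r+m)$-colouring), and asserts $\gcd(a,|H|)=1$ can be arranged via ${\rm hcf}(H)=1$. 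That does not follow: ${\rm hcf}_\chi(H)$ concerns differences of colour-class sizes across \emph{optimal} $r$-colourings, not the sizes of classes in an $(r+m)$-colouring, and a single optimal colouring may well have all class sizes sharing a common factor even when ${\rm hcf}_\chi(H)=1$ (since the hcf is taken over the union of difference sets of \emph{all} optimal colourings). So neither the $K_{r+m}$ shift nor your $K_r$-cross-clique backup is guaranteed to generate $\mathbb{Z}/|H|\mathbb{Z}$.

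Second, the cross-clique construction is not secured. Claim~\ref{c1} bounds $|N_R(U)\cap V_R(C)|$ for $U\notin V_R(C)$; it does not say a fixed $V^*\in V_R(C)$ has many non-neighbours in $V_R(\overline C)$, so the first step of your greedy argument is unsupported. More fundamentally, the Ore condition gives no individual lower bound on $d_R(V^*)$, and the proof actually needs a case split precisely because $V^*$ may have low degree. In the paper's Case~2 (every cluster of every small component has degree below $(1-1/\chi_{cr}(H)+\eta/4)|R|$) the Ore condition forces there to be just one small component whose cluster set is a clique, and the $K_{r+m}$ is placed \emph{entirely inside} $V_R(C')$, with a single outside cluster $V''$ adjacent to one of its vertices carrying exactly the one vertex $y$ of $H$ with $\chi(H[N(y)])=r-2$. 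Rooting a $K_{r+m}$ at a low-degree $V^*$ with all remaining vertices outside, as you propose, need not be possible in this regime. Your plan therefore needs both the $K^-_{r+1}$ device (to make the shift $\pm1$) and the Case~2 structural analysis to go through.
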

\proof 
To prove the claim we will distinguish two cases.

\medskip

\noindent
{\bf{Case~1.}} \emph{There exists a component $C_1 \in \mathcal C$ with
$|V_R (C_1)| \leq |R|/2$ and such that there is a cluster $V_1 \in V_R 
(C_1)$ with $d_R (V_1) \geq (1-1/\chi _{cr} (H)+\eta/4)|R|.$}

\smallskip

\noindent
Recall that $K^-_{r+1}$ is a $K_{r+1}$ with one edge removed.
We call the two non-adjacent vertices of $K^-_{r+1}$ \emph{small}.
We say that a copy $K'$ of $K^-_{r+1}$ in $R$ is \emph{good} if 
either (i) $V(K') \cap V_R(C_1)$ consists of a small vertex of~$K'$ or
(ii) $V(K')  \setminus V_R(C_1)$ consists of a small vertex of~$K'$.
Once we have found a good $K'$, we can use the Embedding lemma to find at most
$|H|-1$ vertex-disjoint copies of~$H$
in~$G$ such that their removal from~$G$ ensures that $|V_G (C_1)|$ is divisible
by~$|H|$, as desired.
(In case (i) precisely one vertex in each of these copies of~$H$ lies in~$V_G (C_1)$
while in case
(ii) precisely $|H|-1$  vertices in each of these copies of~$H$ lies in~$V_G (C_1)$.)
So it suffices to find a good copy of $K^-_{r+1}$.

Let $S$ denote the set of neighbours of $V_1$ outside $V_R(C_1)$ in $R$.
Let $K$ be the set of vertices $V \in S$ with $d_R(V)<(1-1/\chi _{cr} (H)+\eta/4)|R|$.
By~(\ref{oreconR}), $K$ induces a clique in $R$.
If $|K| \ge r$, then we have a found a good copy of $K^-_{r+1}$ (consisting of $V_1$
and $r$ vertices of $K$).
So we may assume that $|K| <r$.

Since $r\geq 3$ we have that $d_R (V_1)\geq (1/2+\eta/4)|R|$. 
So $|S \setminus K| \ge \eta|R|/4-r>0$. Thus we can choose $V_2 \in S \setminus K$.
By~(\ref{neweq}) the number of common neighbours of~$V_1$ and~$V_2$ in~$R$ is at least
\begin{equation} \label{eq12}
\left(1-\frac{2}{r-1+\gamma}+\frac{\eta}{4}\right)|R|.
\end{equation}
We first consider the case when at least
$(1-\frac{2}{r-1+\gamma}+\frac{\eta}{4})|V(R) \setminus V_R(C_1)|$ common neighbours
of~$V_1$ and~$V_2$
lie outside~$V_R (C_1)$. 
We claim that we can find $V_3,\dots,V_r \in S \setminus K$ which form a $K_r$ with
$V_1$ and $V_2$. 
Suppose that we have found $V_3,\dots,V_i$ where $2 \le i \le r-1$.
Note that Claim~\ref{c1} and the definition of $S$ imply that
for $j \ge 2$ the number of neighbours of $V_j$ outside $V_R(C_1)$ is at least 
$(1-1/(r-1+\gamma))|V(R) \setminus V_R(C_1)|$.
Together with~(\ref{eq12}), this implies that the common neighbourhood of
$V_1,\dots,V_i$ outside 
$V_R(C_1)$ has size at least
\begin{equation} \label{lowerV}
\left( 1-\frac{i}{r-1+\gamma}+\frac{\eta}{4}\right)|V(R) \setminus V_R(C_1)| 
\ge \frac{\eta}{4}|V(R) \setminus V_R(C_1)| > r > |K|.
\end{equation}
This shows that we can find  $V_{i+1}$ and more generally $V_3,\dots,V_r$ as required.
A similar calculation as in~(\ref{lowerV}), shows that the common neighbourhood of 
$V_2,\dots,V_r$ outside $V_R(C_1)$ is non-empty and so contains some vertex $V_{r+1}$
say. 
Together with $V_1,\dots,V_r$, $V_{r+1}$ forms a good copy of $K^-_{r+1}$.

Now consider the case when at least $(1-\frac{2}{r-1+\gamma}+\frac{\eta}{4})| V_R
(C_1)|$ common
neighbours of~$V_1$ and~$V_2$ lie inside~$V_R (C_1)$. Since $\eta |V_R (C_1)| /4\ge
\eta |B^*|/4 \ge r$
we can argue as in the previous case. Indeed, this time we choose
$V_3, \dots , V_r$ inside~$V_R (C_1)$ to obtain a copy of~$K_r$ in~$R$ with one vertex,
namely~$V_2$, outside $V_R (C_1)$. We also choose  a vertex~$V_{r+1}$ inside
$V_R (C_1)$ that is adjacent to $V_1$, $V_3,\dots,V_r$. Again, $V_1,\dots,V_{r+1}$ 
form a good copy of $K^-_{r+1}$.

\medskip

\noindent
{\bf{Case~2.}} \emph{ Every component $C \in \mathcal C$ with $|V_R (C)| \leq |R|/2$
is such that
$d_R (V) < (1-1/\chi _{cr} (H)+\eta/4)|R|$ for all $V \in V_R (C)$.}

\smallskip

\noindent
Together with~(\ref{oreconR}) this implies that $V_1V_2 \in E(R)$ for all
$V_1 \in V_R (C_1)$, $V_2 \in V_R (C_2)$ where $C_1,C_2 \in \mathcal C$ are such that
$|V_R (C_1)| , |V_R (C_2)| \leq |R|/2$. But this means that there is only one component
$C' \in \mathcal C$ with $|V_R (C')|\leq |R|/2$.%
       \COMMENT{Indeed, if another such component $C^* \in \mathcal C$ existed then
there would
be all possible edges between $V_R (C')$ and $V_R (C^*)$ in $R$, and so edges
between $C'$ and
$C^*$ in $F$, a contradiction.}
So~$F$ consists of precisely two components~$C'$ and~$C''$ where $V_R (C')$ forms a
clique in~$R$
and $|V_R (C'')|> |R|/2.$

We first consider the case when $r=3$. Note that~$R$ contains an edge between $V_R
(C')$ and
$V_R (C'')$. Indeed, if not then for any $V' \in V_R (C')$ and $V'' \in V_R (C'')$
by~(\ref{oreconR}) we have that $d_R (V')+d_R (V'') \geq 2(1-1/\chi _{cr}
(H)+\eta/4)|R|>|R|$
and so there must be an edge from~$V'$ to $V_R (C'')$ or from~$V''$ to $V_R (C')$, a
contradiction.

So since $|V_R (C')|\ge |B^*| \ge r+m$ we have a copy~$K'_{r+m}$ of~$K_{r+m}$
in~$V_R (C')$
such that there is a cluster $V''\in V_R (C'')$ adjacent to one of the clusters,
$V'$ say, of~$K' _{r+m}$.
Using the definition of~$m$ and the Embedding lemma we can find at most $|H|-1$
copies of~$H$ in~$G$ 
each containing precisely one vertex in $V_G (C'')$ such that their removal ensures
that~$|H|$
divides $|V_R (C')|$ and thus also $|V_R (C'')|$. (Indeed, by definition of~$m$
there exists a vertex~$y$
of~$H$ such that $\chi(H[N(y)])=r-2=1$ and such that some 1-colouring of $N(y)$ can
be extended to
an $(r+m)$-colouring of~$H$. So in our copies of~$H$ the vertex~$y$ will lie
in~$V''$, $N(y)$
will lie in~$V'$ and the remaining vertices of $H$ will lie in $V(K'_{r+m})$.)

Now suppose that $r \geq 4$. We claim that there exists $V'' \in V_R (C'')$ which
sends at least~$r$ edges to~$V_R (C')$ in~$R$. Suppose not. Then no $V\in V_R (C'')$
is joined to all of~$V_R (C')$. Together with the definition of~$C'$
and~(\ref{oreconR})
this implies that $d_R (V) \ge (1-1/\chi _{cr} (H) + \eta /4)|R|$. But then
$|V_R (C')| < |R|/\chi_{cr} (H)$ since otherwise~$V$ is joined to $\eta |R| /4 \geq
r$ vertices in~$V_R (C')$. 
By assumption there are less than $r|V_R (C'')| < r|R|$ edges between $V_R (C')$ and
$V_R (C'')$ in~$R$.
Moreover, by (\ref{oreconR}) and since $|V_R (C')| < |R|/\chi_{cr} (H)$ every
cluster in
$V_R (C')$ sends at least $(1-3/\chi _{cr} (H)+\eta/4)|R|> \eta |R|/4$ edges%
       \COMMENT{here we use that $r\ge 4$}
to $V_R (C'')$.
So $\eta |R| |V_R (C')|/4<r|R|$. But $|V_R (C') | \geq |B^*| \geq 4r/\eta$ by
definition of~$B^*$
and so $\eta |R| |V_R (C')|/4\geq r|R|$, a contradiction.
So indeed there exists a vertex $V'' \in V_R (C'')$ sending at least~$r$ edges to
$V_R (C')$.
As before,%
      \COMMENT{ok since the clusters in~$C'$ form a clique}
we can remove at most $|H|-1$ copies of~$H$ from~$G$ to ensure that~$|H|$ divides both
$|V_R (C')|$ and $|V_R (C'')|$.
\endproof

\begin{claim}\label{c3}
We can make $|V_G (B)|$ divisible by $|H|$ for all $B \in \mathcal B^*$ by removing
at most
$|\mathcal B^*||H|$ copies of~$H$ from~$G$.
\end{claim}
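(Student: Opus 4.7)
The plan is to handle the components of $\mathcal{C}$ one by one and then to redistribute remainders within each component along a spanning tree, exactly as previewed in the outline before Claim~\ref{c1}.

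First, I would make $|V_G(C)|$ divisible by $|H|$ for every $C \in \mathcal{C}$. Since $|G|$ is divisible by $|H|$ and we only delete complete copies of $H$, the sum $\sum_{C \in \mathcal{C}} |V_G(C)|$ stays divisible by $|H|$ throughout, so it is enough to fix the divisibility for all but one component. To do this I apply the argument of Claim~\ref{c2} iteratively: removing a bounded number of copies of $H$ does not alter $R$, $\mathcal{B}^*$ or $F$, affects regularity and super-regularity only negligibly (as observed in Section~\ref{applying}), and preserves the degree estimates in~(\ref{oreconR}) that drive the proof of Claim~\ref{c2}. Thus after handling one component we can reapply the case analysis to a further not-yet-divisible component $C_1$, paying at most $|H|-1$ copies of $H$ per round. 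The total cost of this stage is at most $(|\mathcal{C}|-1)(|H|-1)$ removed copies.

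Second, within each component $C$ I would perform the spanning-tree redistribution. Fix any spanning tree $T$ of $C$ rooted at some $B_0 \in C$. For every edge $B_1 B_2$ of $T$ the definition of $F$ supplies a copy of $K_r$ in $R$ with one vertex in $V_R(B_1)$ and $r-1$ vertices in $V_R(B_2)$ (or vice versa), and the Embedding lemma then produces up to $|H|-1$ vertex-disjoint copies of $H$ each with one vertex in $V_G(B_1)$ and $|H|-1$ vertices in $V_G(B_2)$. Processing the vertices of $T$ from the leaves inward, I remove the appropriate number of such copies at each edge to correct the residue of $|V_G(B)|$ modulo $|H|$ at the non-root endpoint. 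The root $B_0$ ends with residue $0$ automatically, because $|V_G(C)|$ is divisible by $|H|$ after the first stage. If $C$ contains $n_C$ elements of $\mathcal{B}^*$, this stage costs at most $(n_C-1)(|H|-1)$ copies of $H$. Summing both stages gives at most $(|\mathcal{C}|-1)(|H|-1) + \sum_{C \in \mathcal{C}}(n_C-1)(|H|-1) = (|\mathcal{B}^*|-1)(|H|-1) < |\mathcal{B}^*||H|$ copies of $H$ removed, matching the required bound.

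The main obstacle is the iterative use of Claim~\ref{c2}, which as stated only exhibits some component of size at most $|R|/2$, not an arbitrary one. To reuse it I would observe that in Case~1 of its proof any component $C_1$ with $|V_R(C_1)| \leq |R|/2$ that contains a cluster of sufficiently high degree can play the role of $C_1$, so I can always target a component whose divisibility is still to be fixed; and in Case~2 the argument forces $F$ to have exactly two components, in which case a single application of Claim~\ref{c2} already completes the first stage. Verifying that the interaction between the small number of removed vertices per cluster and the component structure of $F$ leaves these case hypotheses intact throughout the iteration is the delicate point, but it is essentially bookkeeping given the constants chosen at the start of Section~\ref{applying}.
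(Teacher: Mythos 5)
Your two-stage structure (first make every $|V_G(C)|$ divisible by $|H|$, then redistribute remainders along a spanning tree of each component) and your final count $(|\mathcal B^*|-1)(|H|-1)<|\mathcal B^*||H|$ both match the paper. The second stage is correct and is exactly what the paper does. The problem is in how you iterate the first stage.

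You propose to keep $R$, $F$ and $\mathcal C$ unchanged and to rerun the case analysis of Claim~\ref{c2} ``targeting'' a not-yet-fixed component each round, arguing that in Case~1 ``any component $C_1$ with $|V_R(C_1)|\le |R|/2$ that contains a cluster of sufficiently high degree can play the role of $C_1$.'' That is true, but nothing forces such an \emph{unfixed} component to exist once the first one has been handled. Concretely, suppose $\mathcal C=\{C_a,C_b,C_c\}$ where $|V_R(C_a)|>|R|/2$, $C_b$ is small and all its clusters have degree below the Case~1 threshold, and $C_c$ is small and contains a high-degree cluster. Your first round fixes $C_c$ via Case~1. Your second round can no longer invoke Case~1 ($C_a$ is too large, $C_b$ has no high-degree cluster), and it cannot invoke Case~2 either, because Case~2's hypothesis (\emph{every} small component is all low-degree) is still violated by $C_c$, which remains in $F$; moreover Case~2 concludes $|\mathcal C|=2$, which is false here. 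So the iteration stalls with $C_a$ and $C_b$ both unfixed. Remarking that ``removing a bounded number of copies of $H$ does not alter $R$, $\mathcal B^*$ or $F$'' does not help — precisely because $F$ does not change, its component structure cannot reorganise to give you a fresh target, and the remaining ``bookkeeping'' you defer is not bookkeeping but the crux.

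The paper avoids this by an explicit reduction step that your proposal omits: after fixing $C_1$ it passes to $R_1:=R-V_R(C_1)$ and $F_1:=F-V(C_1)$, uses Claim~\ref{c1} together with~(\ref{oreconR}) to show that $R_1$ still satisfies $d_{R_1}(V_{j_1})+d_{R_1}(V_{j_2})\ge 2(1-\tfrac{1}{r-1+\gamma}+\tfrac{\eta}{4})|R_1|$, and then reruns the Claim~\ref{c2} case analysis inside $R_1$ with the threshold $|R_1|/2$ rather than $|R|/2$. Deleting $V_R(C_1)$ is what allows the dichotomy ``small with a high-degree cluster'' versus ``all small components are low-degree'' to be re-applied from scratch at each step; in the scenario above, in $R_1$ either $C_a$ or $C_b$ becomes the unique small component and the argument proceeds. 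To repair your proof you need to add this deletion step and the accompanying degree inheritance via Claim~\ref{c1}; without it the induction is not well-founded.
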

\proof
Our first aim is to take out some copies of~$H$ in~$G$ to achieve that $|V_G (C)|$
is divisible by~$|H|$
for each $C \in \mathcal C$. We apply Claim~\ref{c2}
to remove at most $|H|-1$ copies of~$H$ from~$G$ to ensure that $|V_G (C_1)|$ is
divisible by~$|H|$
for some component $C_1 \in \mathcal C$ with $|V_R (C_1)|\leq |R|/2$. Next we
consider the graphs
$F_1:=F-V(C_1)$ and $R_1:= R-V_R (C_1)$ instead of~$F$ and~$R$. Claim~\ref{c1}
and~(\ref{oreconR})
together imply that
$$
d_{R_1} (V_{j_1})+d_{R_1} (V_{j_2}) \geq2\left( 1-
\frac{1}{r-1+\gamma}+\frac{\eta}{4}\right)|R_1|
$$
for all $V_{j_1}\not = V_{j_2} \in V(R_1)$ with $V_{j_1}V_{j_2} \not \in E(R_1)$.
Now suppose that $|\mathcal C|\geq 3$. Then similarly as in the
proof of Claim~\ref{c2} we can find a component $C_2 \in \mathcal C$ with $|V_R
(C_2)|\leq |R_1|/2$
and such that by removing at most $|H|-1$ copies of~$H$ from $G$ we ensure
that~$|H|$ divides $|V_G (C_2)|$.
As~$|G|$ was divisible by~$|H|$ we can continue in this fashion to achieve that
$|V_G (C)|$
is divisible by~$|H|$ for each $C \in \mathcal C$.

During this process we have to take out at most $(|\mathcal C|-1)(|H|-1)$ copies
of~$H$ in~$G$. 
Now consider each $C\in \mathcal C$ separately. By proceeding as in the connected
case for each~$C$ and
taking out at most $(|C|-1)(|H|-1)$ further copies of~$H$ in each case, we can make
$|V_G (B)|$
divisible by~$|H|$ for all $B \in \mathcal B^*$.
Hence, in total we have taken out at most
$(|\mathcal C|-1)(|H|-1)+(|\mathcal B^*|-|\mathcal C|)(|H|-1) \leq |\mathcal
B^*||H|$ copies of~$H$.
(Note that $|\mathcal B^*||H|$ is also an upper bound on the number of copies of~$H$
removed from~$G$
in the case when $r=2$.)
\endproof

\subsection{Applying the Blow-up lemma} 
We now consider all the copies $B'_1, \dots ,B'_{\ell'}$ of~$B'$ in the $B'$-packing
of~$R$,
where the vertices of~$R$ are the modified clusters (i.e.~they do not contain the
vertices
contained in the copies of $H$ removed in Sections~\ref{except} and~\ref{sec6sub}).
For each $i \leq \ell'$ let $G'_i$ denote the $r$-partite subgraph
of~$G'$ whose $j$th vertex class is the union of all the clusters lying in the $j$th
vertex class of
$B'_i$ (for $j=1, \dots,r$). In Section~\ref{sec6sub} we made $|G'_i|=|V_G (B'_i)|$
divisible by~$|H|$
for each~$i$. Moreover, in Section~\ref{except} we removed at most $\beta L'$
vertices from each cluster.
In Section~\ref{sec6sub} we removed only a bounded number of further vertices. So
altogether
we removed at most $2\beta L'$ vertices from each cluster. 
Since $\beta \ll \lambda \ll \gamma,1-\gamma$ we may apply Lemma~\ref{useful} to
conclude that the
complete $r$-partite graph whose vertex classes are the same as the vertex classes
of~$G'_i$
has a perfect $H$-packing.

We observed at the end of Section~\ref{applying} that the choice of those copies
of~$H$ removed in
Section~\ref{except} ensures that all the bipartite subgraphs corresponding to edges
of~$B'_i$
are still $(5\eps , d/5)$-super-regular. In Section~\ref{sec6sub} we only removed
a bounded number of further vertices from each cluster. So after
Section~\ref{sec6sub} the
bipartite subgraphs of~$G'_i$ are still $(6 \eps , d/6)$-super-regular. Hence, for each
$i=1, \dots , \ell'$, we may apply the Blow-up lemma to find a perfect $H$-packing
in~$G'_i$.
All these $H$-packings together with the copies of $H$ chosen previously form a
perfect $H$-packing
in~$G$, as desired.

\medskip

{\footnotesize \obeylines \parindent=0pt

Daniela K\"{u}hn, Deryk Osthus \& Andrew Treglown
School of Mathematics
University of Birmingham
Edgbaston
Birmingham
B15 2TT
UK
}

{\footnotesize \parindent=0pt

\it{E-mail addresses}:
\tt{\{kuehn,osthus,treglowa\}@maths.bham.ac.uk}}
\end{document}